\newtheorem{theorem}{Theorem}[section]
\newtheorem{lemma}[theorem]{Lemma}
\newtheorem{corollary}[theorem]{Corollary}
\newtheorem{proposition}[theorem]{Proposition}
\newtheorem{conjecture}[theorem]{Conjecture}
\theoremstyle{definition}
\newtheorem{definition}[theorem]{Definition}
\theoremstyle{remark}
\newtheorem{remark}[theorem]{\bf Remark}
\numberwithin{equation}{section}
\newcommand{\C}{\mathbb C}
\newcommand{\PP}{\mathbb P}
\newcommand{\Q}{\mathbb Q}
\newcommand{\G}{\mathbb G}
\newcommand{\Z}{\mathbb Z}
\newcommand{\N}{\mathbb N}
\newcommand{\A}{\mathbb A}
\newcommand{\R}{\mathbb R}
\newcommand{\LL}{\mathbb L}
\DeclareMathOperator{\AH}{AH} 
\DeclareMathOperator{\alg}{alg}
\DeclareMathOperator{\gpast}{ast}
\DeclareMathOperator{\AST}{AST}
\DeclareMathOperator{\Aut}{Aut}
\DeclareMathOperator{\CH}{CH} 
\DeclareMathOperator{\gpDH}{DH}
\DeclareMathOperator{\gpDL}{DL}
\DeclareMathOperator{\End}{End}
\DeclareMathOperator{\et}{et}
\DeclareMathOperator{\Fr}{Fr}
\DeclareMathOperator{\Gal}{Gal}
\DeclareMathOperator{\GIso}{GIso}
\DeclareMathOperator{\GL}{GL}
\DeclareMathOperator{\SL}{SL}
\DeclareMathOperator{\GO}{GO}
\DeclareMathOperator{\GSp}{GSp}
\DeclareMathOperator{\gpH}{H}
\DeclareMathOperator{\id}{id}
\DeclareMathOperator{\Iso}{Iso}
\DeclareMathOperator{\Ker}{Ker}
\DeclareMathOperator{\gpL}{L}
\DeclareMathOperator{\gpO}{O}
\DeclareMathOperator{\MMT}{MMT}
\DeclareMathOperator{\MS}{MS}
\DeclareMathOperator{\MT}{MT}
\DeclareMathOperator{\PHS}{PHS}
\DeclareMathOperator{\Sp}{Sp}
\DeclareMathOperator{\ST}{ST}
\DeclareMathOperator{\gpU}{U}
\DeclareMathOperator{\Zar}{Zar}
\begin{document}

\title[Motivic Serre group and algebraic Sato-Tate group]
{{Motivic Serre group, algebraic Sato-Tate group and Sato-Tate conjecture}}
%\pagestyle{headings}

%    Information for first author
\author[Grzegorz Banaszak]{Grzegorz Banaszak}
\address{Department of Mathematics and Computer Science, Adam Mickiewicz University,
Pozna\'{n} 61614, Poland. Current address: Department of Mathematics, University of California, San Diego, La Jolla, CA 92093, USA}
\email{banaszak@amu.edu.pl}

%    Information for second author
\author[Kiran S. Kedlaya]{Kiran S. Kedlaya}
\address{Department of Mathematics, University of California, San Diego, La Jolla, CA 92093, USA}
\email{kedlaya@ucsd.edu}
\urladdr{http://kskedlaya.org}

%\address{Department of Mathematics, Massachusetts Institute of Technology, Cambridge, MA 02139, USA}
%\email{kedlaya@mit.edu}

%    General info
%\subjclass[2000]{19D10, 11G30}
%\date{}
\keywords{Mumford-Tate group, Algebraic Sato-Tate group}
\subjclass[2010]{Primary 14C30; Secondary 11G35}

\thanks{G. Banaszak was supported by the NCN 2013/09/B/ST1/04416 
(National Center for Science of Poland), Weizmann Institute and Hebrew University in June 2012, 
Warsaw University Oct. 2012--Feb. 2013 and University of California, San Diego Sept. 2014--June 2015. 
K. Kedlaya was supported by NSF grant DMS-1101343 and UC San Diego
(Stefan E. Warschawski professorship). Thanks to Jean-Pierre Serre for answering a 
question and suggesting a correction concerning $l$-adic representations, and to Pierre Deligne and Norbert Schappacher for 
answering questions concerning motives associated with modular forms}
\begin{abstract}
We make explicit Serre's generalization of the Sato-Tate conjecture for motives, by expressing the construction in terms of fiber functors from the motivic category of 
absolute Hodge cycles into a suitable category of Hodge structures of odd weight. This extends the case of abelian varietes, which we treated in a previous paper. That description was used by Fit\'e--Kedlaya--Rotger--Sutherland to classify Sato-Tate groups of abelian surfaces; the present description is used by Fit\'e--Kedlaya--Sutherland to make a similar classification for certain motives of weight 3. We also give conditions under which verification of the Sato-Tate conjecture reduces to the identity connected component of the corresponding Sato-Tate group.
\end{abstract}

\maketitle

%--------------------Introduction-----------------------------------

\section{Introduction}

In \cite{Se94}, Serre gave a general approach, in terms of the motivic category for numerical equivalence, 
towards the question of equidistribution of Frobenius elements in families of $l$-adic representations; this approach puts such questions as the Chebotarev density theorem and the Sato-Tate conjecture in a common framework.
Serre revisited this topic in \cite{Se12}, making the description somewhat more explicit.
The purpose of this paper is to follow in this direction, expressing Serre's construction in terms of fiber functors from 
the motivic category of absolute Hodge cycles into a suitable category of Hodge structures of odd weight. This extends our previous paper \cite{BK}, in which we carried out this program for abelian varieties; this was motivated by the immediate application to the classification of Sato-Tate groups of abelian surfaces in \cite{FKRS12}. Similarly, 
the results of this paper are used in \cite{FKS} to carry out a similar classification for a special class of motives of weight 3, and are expected to find further use in similar classifications for other classes of motives of odd weight. (Some modifications are needed to handle cases of even weight, such as K3 surfaces.)
The organization of the paper is as follows.
\medskip

In chapter 2, we briefly recall some facts about Hodge structures and Mumford-Tate groups 
in a fashion suitable for our exposition. 
\medskip

In chapter 3, we extend the notion of twisted decomposable Lefschetz group, introduced in \cite{BK}, to 
Hodge structures with some extra endomorphism structure. The twisted decomposable Lefschetz group (Definition 
\ref{twisted decomposable Lefschetz group}) is the disjoint sum of Galois twists 
(Definition \ref{Galois twist of the Lefschetz group}) of the Lefschetz group.
\medskip

In chapter 4, we work with Hodge structures associated with families of $l$-adic representations and prove 
basic results concerning relations between group schemes $G_{l, K, 1}^{\alg}$ and $G_{l, K}^{\alg}.$
\medskip

In chapter 5, we state the algebraic Sato-Tate conjecture for families of $l$-adic 
representations associated with Hodge structures. We also restate the Sato-Tate conjecture in this case and prove some 
basic properties of the algebraic Sato-Tate group and the Sato-Tate group. In particular, under the algebraic Sato-Tate conjecture, 
we establish the isomorphism (Proposition \ref{connected components iso}) between the groups of connected 
components of the algebraic Sato-Tate and Sato-Tate groups. We also introduce Galois twists inside $G_{l, K, 1}^{\alg}$
(see Definition \ref{twisted form of GlK and GlK1}) and we explain the relation of these twists to Galois twists of
the corresponding Lefschetz group.
\medskip

In chapter 6, under some mild assumptions on the base field $K,$ we compute connected components of $G_{l, K, 1}^{\alg}$
(Theorem \ref{jK0K and Zar1 are isomorphisms}). Then, under the algebraic Sato-Tate conjecture, we make a corresponding 
computation of connected components of $AST_{K}$ and $ST_{K}$ (Theorem \ref{STK iff STK0}). As a consequence, we prove that the Sato-Tate conjecture holds with respect to $ST_{K}$ if and only if it holds with respect to the connected component of $ST_{K}$ 
(Theorem \ref{STK iff STK0}). 
\medskip
  
In chapter 7, we show how to compute Mumford-Tate and Hodge groups for powers of Hodge structures and similarly how to 
compute $G_{l, K, 1}^{\alg}$ and $G_{l, K}^{\alg}$ for powers of $l$-adic representations. We also observe that in some 
cases, the Mumford-Tate conjecture implies the algebraic Sato-Tate conjecture.   
\medskip

In chapter 8, we continue the discussion from chapter 7 of the relationship between the algebraic Sato-Tate conjecture 
and the Mumford-Tate conjecture. We establish conditions for the algebraic Sato-Tate conjecture to hold with the algebraic Sato-Tate 
group equal to the corresponding twisted decomposable Lefschetz group. 
\medskip

Chapters 9--11 give the application of chapters 2--8 to the case where the polarized Hodge structures
and associated $l$-adic representations come from motives in the motivic category of absolute Hodge cycles
introduced by Deligne \cite{D1}, \cite{DM}. All the assumptions on Hodge structures and associated $l$-adic representations 
we made in chapters 2--5 are satisfied in this case. 
\medskip

At the beginning of chapter 9, we recall some results concerning the category $\mathcal{M}_{K}$ of 
motives for absolute Hodge cycles. Next, for a motive $M$ of $\mathcal{M}_{K}$ we introduce the 
Artin motive $h^0(D)$ corresponding to $D := \End_{\mathcal{M}_{\overline{K}}} (\overline{M})$ and compute the 
motivic Galois group $G_{\mathcal{M}^{0}_{K} (D)}$ of the smallest Tannakian subcategory 
$\mathcal{M}^{0}_{K} (D)$ of $\mathcal{M}_{K}$ generated 
by $h^0(D).$ Also, let $\mathcal{M}_{K} (M)$ denote the smallest Tannakian subcategory of $\mathcal{M}_{K}$ 
generated by $M.$ 
From this point on in the paper, we work only with \emph{homogeneous motives},
i.e., motives which occur as factors of motives of the form $h^r (X) (m)$
for some smooth projective variety $X$ over $K$ and some
and $m \in \Z$. For $M$ a homogeneous motive, we consider the 
motivic Galois group $G_{\mathcal{M}_{K} (M)}$ and the motivic Serre group $G_{\mathcal{M}_{K} (M), 1}$ (Definition  
\ref{motivic Galois group, motivic Serre group}). We also define Galois twists (Definition
\ref{Definition of GMKA1tau}) inside the 
motivic Serre group and explain their relation to Galois twists of the corresponding 
Lefschetz group. The precise computation of $G_{\mathcal{M}^{0}_{K} (D)}$
allows us to write down the motivic Serre group as a disjoint union of these twists
(see \eqref{GMKA1 decomposition into GMKA1tau}).

\medskip

At the beginning of chapter 10, we find a sufficient condition (Theorem 
\ref{equality of conn comp for GM and GM1}) for the natural map
$\pi_{0}(G_{\mathcal{M}_{K} (M), 1}) \rightarrow \pi_{0}(G_{\mathcal{M}_{K} (M)})$
to be an isomorphism. Theorem~\ref{equality of conn comp for GM and GM1}
is the motivic analogue of Theorem~\ref{equality of conn comp 
for Glalg and Glalg1}. We then introduce the motivic Mumford-Tate conjecture and motivic Sato-Tate 
conjecture.

\medskip

We start chapter 11 by recalling the relationship between the motivic Mumford-Tate group with the corresponding 
Mumford-Tate group and the relation of the motivic Serre group with the Hodge group. Under Serre's conjecture that
$\MT (V, \psi) = \MMT_{K} (M)^{\circ}$, i.e., that the Mumford-Tate group is equal to the connected component of the motivic Mumford-Tate group, we define (Definition \ref{GMKA1 as AST group}) the algebraic Sato-Tate group. We collect the main properties of the algebraic Sato-Tate group in Theorem \ref{AST as expected extension of HA}. At the end of this chapter, under the assumption that
$\gpH(V, \psi) = C_{D} (\Iso_{(V, \psi)}),$ we show that the algebraic Sato-Tate group is the corresponding 
twisted decomposable Lefschetz group (Corollary \ref{The natural candidate for AST group example 1}). In addition, under the Mumford-Tate conjecture, we prove the algebraic Sato-Tate conjecture in this case (Corollary \ref{cor alg Sato-Tate for MT explained by endo}). We finish by proving, under an assumption on homotheties in the associated $l$-adic representations and 
under the algebraic Sato-Tate Conjecture for the base field, that the Sato-Tate conjecture holds with respect to $ST_{K}$ if and only if it holds with respect to the connected component of $ST_{K}$ (Theorem \ref{ST with resp. to STK the same as with resp. to STK0}). 
Theorem \ref{ST with resp. to STK the same as with resp. to STK0} may serve of use in proving cases of the Sato-Tate conjecture, by making it possible to avoid computations involving connected components of the Sato-Tate group.

\medskip
In conclusion, recall that for Absolute Hodge Cycles (AHC) motives (Definition \ref{AHC motives}),
Serre's conjecture $\MT (V, \psi) = \MMT_{K} (M)^{\circ}$ holds (Remark 
\ref{A Serre conjecture about connected component of motivic Mumford-Tate}). 
Hence the algebraic Sato-Tate group is defined (Definition \ref{GMKA1 as AST group}) unconditionally 
for AHC motives. All motives associated with abelian varieties are AHC motives (\cite[Theorem 2.11]{D1}). Moreover, if $\mathcal{M}^{{\rm{av}}}_{K}$ 
denotes the Tannakian subcategory of $\mathcal{M}_{K}$ generated by abelian varieties and Artin motives, then every motive in 
$\mathcal{M}^{{\rm{av}}}_{K}$ is an AHC motive (\cite[Theorem 6.25]{DM}). So the algebraic Sato-Tate group is defined unconditionally 
for motives in $\mathcal{M}^{{\rm{av}}}_{K}$ (cf. \cite{BK}). It is shown in \cite[Proposition 6.26]{DM} that the motives associated 
with curves, unirational varieties of dimension $\leq 3,$ Fermat hypersurfaces, and K3 surfaces belong to $\mathcal{M}^{{\rm{av}}}_{K}.$ In general, the Hodge conjecture implies that every Hodge cycle on a motive is an algebraic cycle, and Deligne showed that every algebraic cycle is an absolute Hodge cycle (\cite[Example 2.1]{D1}). Hence the Hodge conjecture 
implies that every motive is an AHC motive.

%---\section{Hodge structures and Mumford-Tate group}----

\section{Hodge structures and Mumford-Tate group}
Let $(V, \psi)$ be a rational, polarized, pure Hodge structure of weight $n.$
Hence $V$ is a vector space over $\Q$ and $\psi$ is a bilinear nondegenerate
$(-1)^n$ symmetric form $\psi\, :\, V \times V \rightarrow \Q(-n)$ such that
$V_{\C}$ has a pure Hodge structure of weight $n.$ 
% In this way $(V, \psi)$ is a polarized, pure Hodge
%structure of weight $n.$ 
Let $\PHS(\Q)$ denotes the category of
rational, polarized, pure Hodge structures. 
The category $\PHS(\Q)$ is abelian and semisimple \cite[Lemme 4.2.3, p.\ 44]{D2} 
(cf. \cite[Cor. 2.12, p.\ 40]{PS}).  
Let $D_h := D (V, \psi) := \End_{\PHS(\Q)} (V, \psi).$ 
In particular $D_h$ is a finite-dimensional semisimple algebra over $\Q.$  
If $(V, \psi)$ is a simple polarized Hodge structure, then $D_h$ is a division algebra.
By the definition of a Hodge structure,
\begin{equation}
V_{\C} := V \otimes_{\Q} {\C}= \bigoplus_{n=p+q} V^{p,q} 
\label{Hodge decomposition}
\end{equation}
where \,${\overline{V^{p,q}}}= V^{q,p}$ and 
the $V^{p,q}$ are equivariant with respect to the action of the endomorphism algebra 
$D_h \otimes_{\Q} \C.$ Put $\psi_{\C} := \psi \otimes_{\Q} \C.$ 
Recall that $\Q (-n)$ is a pure Hodge structure of weight $2 n$ such that 
$\C (-n) = \C (-n)^{n,n}.$ The polarization $\psi$ can be seen as the morphism 
$\psi\, :\, V \otimes_{\Q} V \rightarrow \Q(-n)$
of pure Hodge structures of weight $2 n$ so that the $\C$-bilinear form 
$\psi_{\C} \, : \, V_{\C} \times V_{\C} \rightarrow \C (-n)$
has the property that $\psi_{\C} (V^{p,q} \times V^{p^{\prime},q^{\prime}}) = 0$
if $p + p^{\prime} \not= n$ or $q + q^{\prime} \not= n.$

\begin{remark}
More generally, $(T, \varphi)$ is an integral, polarized, pure Hodge structure of weight $n$
if $T$ is a free abelian group and $\varphi\, :\, T \times T \rightarrow \Z (-n)$ 
is a nondegenerate $\Z$-bilinear map such that $(V, \psi)$ is a rational, polarized pure Hodge 
structure of weight $n,$ where $V := T \otimes_{\Z} \Q$ and 
$\psi := \varphi \otimes \Q.$ Let $\PHS(\Z)$ denote the category of
integral, polarized, pure Hodge structures. 
\end{remark}

\begin{remark}
A recent, simple approach to real Hodge and mixed Hodge structures can be found in 
\cite{BM1}, \cite{BM2}. 
\label{remark on BM work}\end{remark}

\begin{remark} 
The vector space $V$ defines a commutative group scheme, also denoted $V$ by abuse of notation, whose points with values in a unital commutative $\Q$-algebra $R$ are:
$$V(R) \, := \, V \otimes_{\Q} R.$$
If $d := \dim_{\Q}\, V$, then any choice of basis of the vector space $V$ gives an isomorphism
$V \, \cong \, \A^d$ of group schemes over $\Q.$ We will equip $V$ 
with the tautological action of the group scheme $\GL_V$.   
\label{V as a group scheme}\end{remark}
\medskip

We will be particularly interested in those elements $g \in \GL_{V},$ for which there exists an element $\chi(g) \in \G_{m,\Q}$ such that $\psi (gv, gw) = \chi (g) \psi (v, w)$ for all $v, w \in V$. The following formulas determine group subschemes of $\GL_V$ of special interest.
\begin{align}
\GIso_{(V, \psi)} &:= \{g \in \GL_{V}:\, \psi (gv, gw) = \chi (g) \psi (v, w) \,\,\,\, \forall \, v, w \in V\},
\label{def of GIso} \\
\Iso_{(V, \psi)} &:= \{g \in \GL_{V}:\,\, \psi (gv, gw) =  \psi (v, w) \,\,\,\,  \forall \, v, w \in V\}.
\label{def of Iso}
\end{align} 
There is also a map of group schemes
$$\chi \, :\, \GIso_{(V, \psi)} \, \rightarrow \, \G_{m,\Q}$$
$$ g \, \mapsto \, \chi (g),$$
which is a character of $\GIso_{(V,\psi)}$ such that $\Iso_{(V, \psi)} = {\rm{Ker}}\, \chi.$ 

\begin{remark}\label{the image of homothety via chi}
Observe that for every $\alpha \in \G_{m,\Q}$ by bilinearity of $\psi$ we have:
\begin{equation}
\psi (\alpha \, {{\rm Id}}_{V} v,\,  \alpha \, {{\rm Id}}_{V} w) = \psi (\alpha v, \, \alpha w) =
\alpha^2 \, \psi(v, \, w).
\nonumber
\end{equation}
Hence 
\begin{equation}
\alpha \, {{\rm Id}}_{V} \in 
\GIso_{(V, \psi)}
\quad {{\rm and}} \quad \chi (\alpha \, {{\rm Id}}_{V}) = \alpha^2 .
\label{chi of homothety}
\end{equation}
\end{remark}
\medskip

We also observe that:
\begin{align}
\GIso_{(V, \psi)} \,\, &= \,\,
\left\{
\begin{array}{lll}
\GO_{(V, \psi)}&\rm{if}&n \,\,\, \rm{even}\\
\GSp_{(V, \psi)}&\rm{if}&n \,\,\, \rm{odd;}\\
\end{array}\right.
\\
\Iso_{(V, \psi)} \,\, &= \,\,
\left\{
\begin{array}{lll}
\gpO_{(V, \psi)}&\rm{if}&n \,\,\, \rm{even}\\
\Sp_{(V, \psi)}&\rm{if}&n \,\,\, \rm{odd.}\\
\end{array}\right.
\end{align}

\begin{definition}
For any pure Hodge structure (not necessarily polarized)
define the cocharacter \cite[p. 42]{D1}
\begin{equation}
\mu_{\infty, V} \, :\, \mathbb{G}_{m}({\C})\rightarrow \GL(V_{\C})
\label{cocharacter}
\end{equation}
such that for any $z\in {\C}^{\times},$ the automorphism $\mu_{\infty, V}(z)$ acts as 
multiplication by $z^{-p}$ on $V^{p,q}$ for each $p+q = n.$ 
\end{definition}
\medskip

\noindent
Notice that the complex conjugate cocharacter is 
\begin{equation}
\overline{\mu_{\infty, V}} \, :\, \mathbb{G}_{m}({\C})\rightarrow 
\GL(V_{\C})
\label{conjugate cocharacter}
\end{equation}
such that for any $z\in {\C}^{\times},$ the automorphism $\overline{\mu_{\infty, V}}(z)$ acts as 
multiplication by $\bar{z}^{-q}$ on $V^{p,q}$ for each $p+q = n.$ 
Observe that for $v \in V^{p,q}$ and $w \in V^{n-p,n-q}$  we have:
\begin{align}
\psi_{\C} (\mu_{\infty, V} (z) v, \, \mu_{\infty, V} (z) w) &= 
\psi_{\C} (z^{-p} v, \, z^{p-n} w) = z^{-n} \psi_{\C} (v, \,  w),
\label{mu infty and psi}
\\
\psi_{\C} (\overline{\mu_{\infty, V}} (z) \, v, \,\, \overline{\mu_{\infty, V}} \, (z) w) &= 
\psi_{\C} ({\bar z}^{-q} \, v, \,\, {\bar z}^{q-n} \,w) = {\bar z}^{-n} \psi_{\C} (v, \,  w).
\label{overline mu infty and psi}
\end{align}
 Hence
\begin{equation}
\mu_{\infty, V}(\C^{\times}) \, \subset \GIso_{(V,\, \psi)} (\C).
\label{muInfty subset GIso}\end{equation}
Since $D_h$ commutes  with $\mu_{\infty, V}(\C^{\times})$ on $V_{\C}$ elementwise,
it is clear that:
\begin{equation}
\mu_{\infty, V}(\C^{\times}) \, \subset C_{D_h} \GIso_{(V,\, \psi)} (\C).
\label{muInfty subset CDh GIso}
\end{equation}

\noindent
Let ${\mathbb S} := R_{\C/ \R} \, \G_{m}.$
The product $\mu_{\infty, V} \overline{\mu_{\infty, V}}$ restricted to each 
$V^{p,q}\oplus V^{q,p}$ gives the homomorphism of real algebraic groups:
\begin{equation}
h_{\infty, V}\, :\, {\mathbb S} \rightarrow \GL_{V_{\R}}.
\label{homomorphism h}\end{equation}
It follows from \eqref{mu infty and psi}, \eqref{overline mu infty and psi} that 
there is the following commutative diagram:
$$
{\xymatrix{
1 \ar[r]^{} & \Iso_{(V_{\R}, \psi_{\R})}  \ar[r]^{}  \quad 
& \quad \GIso_{(V_{\R}, \psi_{\R})}  \ar[r]^{}  \quad & 
\quad \G_{m} \ar[r]^{} & 1\\ 
1 \ar[r]^{} & \gpU (1) \ar@<0.1ex>[u]_{h_{\infty, V}}  \ar[r]^{} \quad  & 
\quad {\mathbb S} \ar@<0.1ex>[u]_{h_{\infty, V}} \ar[r]^{}  \quad & 
\quad  \G_m \ar@<0.1ex>[u]_{- n} \ar[r]^{} & 1 \\
}
\label{diagram for h maps}}$$

\begin{definition}\label{Definition Mumford Tate} (Mumford-Tate and Hodge groups)
\newline

\begin{enumerate}
\item{}
The \emph{Mumford-Tate group} of $(V, \psi)$ is the smallest algebraic subgroup 
$\MT(V, \psi) \subset \GIso_{(V, \psi)}$ over $\Q$ such that $\MT(V, \psi)({\C})$
contains $\mu_{\infty, V}({\C}).$ 
\item{} The \emph{decomposable Hodge group} is $\gpDH(V, \psi) := \MT(V, \psi) \cap 
\Iso_{(V, \psi)}$.
\item{}
The \emph{Hodge group} $\gpH(V, \psi) := \gpDH(V, \psi)^{\circ}$ is the connected
component of the identity in $\gpDH(V,\, \psi).$
\end{enumerate}
\end{definition}

We can equivalently define the Mumford-Tate and Hodge groups as follows.

\begin{enumerate}
\item{}
The \emph{Mumford-Tate group} of $(V, \psi)$ is the smallest algebraic subgroup 
$\MT(V, \psi) \subset \GIso_{(V, \psi)}$ over $\Q$ such that $\MT(V, \psi)({\C})$
contains $h_{\infty, V}({\mathbb S}({\C})).$ 
\item{}
The \emph{Hodge group} of $(V, \psi)$ is the smallest algebraic subgroup 
$\gpH(V, \psi) \subset \Iso_{(V, \psi)}$ over $\Q$ such that $\gpH(V, \psi)({\C})$
contains $h_{\infty, V}(U (1) ({\C})).$  
\end{enumerate}

\begin{remark}
Note that 
$\MT(V, \psi)$ is a reductive subgroup of $\GIso_{(V,\, \psi)}$ \cite[Prop.\ 3.6]{D1},
\cite[Th.\ 2.19]{PS}. It follows by (\ref{muInfty subset CDh GIso}) that
\begin{equation}
\MT(V, \psi) \subset C_{D_h}(\GIso_{(V,\, \psi)}). 
\label{MT subset CDh GIso}
\end{equation}
Moreover $\gpH(V, \psi)\, \subset \Iso_{(V,\, \psi)}$, hence
\begin{equation}
\gpH(V, \psi) \subset C_{D_h}(\Iso_{(V,\, \psi)}).
\label{DH and CDh}\end{equation}
For additional background on Mumford-Tate groups, see the lecture notes of
Moonen \cite{Mo1, Mo2}.
\end{remark}
  
In our investigation of the algebraic Sato-Tate group for Hodge structures, we will need to
investigate not only $D_h$ but possibly also other subrings $D \subset \End_{\Q} (V)$ such that $D$ acts 
on $V \otimes \C$ preserving the Hodge decomposition: $D V^{p,q} \subset V^{p,q}$ for all 
$p+q = n.$ Such a  $D$ commutes  with $\mu_{\infty, V}(\C^{\times})$ on $V_{\C}$ elementwise, hence:
\begin{equation}
\mu_{\infty, V}(\C^{\times}) \, \subset C_{D} \GIso_{(V,\, \psi)} (\C).
\label{muInfty subset CD GIso}
\end{equation}
Hence it follows by (\ref{muInfty subset CD GIso}) that
\begin{equation}
\MT(V, \psi) \subset C_{D}(\GIso_{(V,\, \psi)}). 
\label{MT subset CD GIso}
\end{equation}
\begin{equation}
\gpH(V, \psi) \subset C_{D}(\Iso_{(V,\, \psi)}).
\label{DH and CD}\end{equation}

\begin{definition}\label{Definition Lefschetz} 
The algebraic group: 
\begin{equation}
\gpL(V, \psi, D) := C_D^{\circ}(\Iso_{(V,\, \psi)})
\label{Def of Lefschetz group}\end{equation}
is called the {\it Lefschetz group} of $(V, \psi)$ and the ring $D.$ 
\end{definition}

\begin{remark}
By \eqref{DH and CD} and the connectedness of $\gpH(V, \psi)$, we have
\begin{equation}
\gpH(V, \psi) \subset \gpL(V, \psi, D).
\label{Hodge subset of Lefschetz}\end{equation} 
In particular
\begin{equation}
\gpH(V, \psi) \subset \gpL(V, \psi, D_h).
\label{Hodge subset of Lefschetz for Dh}\end{equation}  
\end{remark}

%\section{Twisted Lefschetz groups}

\section{Twisted Lefschetz groups}
Let $D \subset \End_{\Q} (V)$ be a subring such that the action of $D$ of
$V \otimes \C$ preserves the Hodge decomposition, i.e. $D V^{p,q} \subset V^{p,q}$ for all $p, q.$ 
\medskip

\noindent
Fix a number field $F$ and an algebraic closure $\overline{F}.$  
In this paper, $K/F$ will denote any finite extension contained in $\overline{F}.$
We assume that the ring $D$ admits a continuous representation of the absolute Galois group $G_F$
of $F$ such that its restriction to $G_K$ is denoted: 
\begin{equation}
\rho_{e} \, : \, G_K \rightarrow \Aut_{\Q} (D).
\end{equation}
\medskip

\noindent
\begin{definition}
The fixed field of the kernel of $\rho_e$ will be denoted: 
\begin{equation}
K_{e} \,\, := \,\, {\overline{K}}^{{\Ker \rho_{e}}}.
\nonumber\end{equation}
\label{def. of Le}
\end{definition}

\begin{remark}
The extension $K_{e}/K$ is finite 
Galois and $G_{K_{e}} = \Ker \rho_{e}.$ The field $K_e$ depends on $K$; in particular, it is not invariant under base change along an arbitrary extension $L/K.$ However, it is obvious that $K_e$ will not change if we change base along an
extension $L/K$ such that $L \subset K_e.$  
\label{base change and Le}
\end{remark}

\begin{definition}
 For $\tau \in \Gal(K_{e}/K)$, define:
\begin{equation}
\gpDL_{K}^{\tau}(V,\, \psi, D) := \{g \in \Iso_{(V,\, \psi)}: \, g \beta g^{-1} = \rho_e(\tau) (\beta) \,\,\,
\forall \,\beta \in D\}.
\label{decomposable twisted Lefschetz for fixed element}\end{equation} 
Because $D$ is a finite-dimensional  $\Q$-vector space, $\gpDL_{K}^{\tau}(V, \psi, D)$ is a closed subscheme of $\Iso_{(V,\, \psi)}$ for each $\tau.$ 
\label{Galois twist of the Lefschetz group}
\end{definition}

\begin{definition}
Define the \emph{twisted decomposable algebraic Lefschetz group} for the triple 
$(V,\, \psi, D)$ to be the closed algebraic subgroup of $\Iso_{(V,\, \psi)}$ given by
\begin{equation}
\gpDL_{K}(V, \psi, D) \,\,\, : = \bigsqcup_{\tau \in \Gal(K_{e}/K)} \,\, \gpDL_{K}^{\tau}(V, \psi, D).
\label{decomposition into twisted Lefschetz for fixed elements}
\end{equation} 
\label{twisted decomposable Lefschetz group}
\end{definition}

\noindent
For any subextension $L/K$ of ${\overline F}/K$, we have
$\gpDL_{L}(V, \psi, D) \subseteq \gpDL_{K}(V, \psi, D)$ and $\gpDL_{L}^{\id}(V, \psi, D) = 
\gpDL_{K}^{\id}(V, \psi, D)$.
Hence:
\begin{equation}
\gpL(V, \psi, D) = \gpDL^{\id}_{K}(V, \psi, D)^{\circ} = \gpDL_{K}(V, \psi, D)^{\circ} =
\label{twisted Lefschetz}\end{equation}
\begin{equation}
= \gpDL^{\id}_{L}(V, \psi, D)^{\circ} = \gpDL_{L}(V, \psi, D)^{\circ}.
\nonumber\end{equation}
In particular,
\begin{equation}
\gpDL_{K_e}^{\id}(V, \psi, D) = \gpDL_{K_e}(V, \psi, D) = 
\gpDL_{\overline F}(V, \psi, D) = \gpDL_{{\overline F}}^{\id}(V, \psi, D),
\label{simple properties of decomposible Lefschetz 1}
\end{equation}
\begin{equation}
\gpL (V, \psi, D) = \gpDL_{K_e}(V, \psi, D)^{\circ}= \gpDL_{\overline F}(V, \psi, D)^{\circ}.
\end{equation}

\begin{theorem}\label{Decomposible Lefschetz group standard properties} 
The twisted decomposable Lefschetz groups have the following 
properties.
\begin{itemize} 
\item[1.] $\gpDL_{K}^{\tau} (V^s, \psi^s, M_{s} (D)) \cong \gpDL_{K}^{\tau} (V, \psi, D)$
for every $\tau \in \Gal(K_e /K).$
\item[2.] Let $(V_{i}, \psi_{i})$ be polarized Hodge structures and let $D_i$ be finite-dimensional $\Q$-algebras preserving the Hodge structures $V_i.$ 
Let $D_i$ admit a continuous $G_K$-action. Put $(V, \psi) := 
\bigoplus_{i=1}^t (V_{i}, \psi_{i})$ and $D := \prod_{i=1}^t \, D_i.$
Then $\gpDL_{K}^{\tau} (V, \psi, D) \cong \prod_{i=1}^t \, \gpDL_{K}^{\tau} (V_i, \psi_{i}, D_i)$
for every $\tau \in \Gal(K_e /K).$ 
\item[3.] Let $(V_{i}, \psi_{i})$ be polarized Hodge structures and let $D_i$ be finite-dimensional $\Q$-algebras preserving the Hodge structures $V_i.$ 
Let $D_i$ admit a continuous $G_K$-action. Put $(V, \psi) := 
\bigoplus_{i=1}^t (V_{i}^{s_i}, \psi_{i}^{s_i})$ and $D := \prod_{i=1}^t \, M_{s_i} (D_i).$
Then $\gpDL_{K}^{\tau} (V, \psi, D) \cong \prod_{i=1}^t \, \gpDL_{K}^{\tau} (V_{i}, \psi_{i}, D_i)$
for every $\tau \in \Gal(K_e /K).$ 
\end{itemize}
\end{theorem}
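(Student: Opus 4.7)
The plan is to establish (1) by a Morita/Schur-type argument reducing an $M_s(D)$-twist-compatible isometry of $V^s$ to a $D$-twist-compatible isometry of $V$; establish (2) by using that the central idempotents of $D=\prod_i D_i$ are Galois-fixed, so any element of $\gpDL_K^\tau(V,\psi,D)$ respects the block decomposition; and derive (3) as the combination. I expect the main subtlety to be not the group theory but fixing conventions on the $G_K$-action on the enlarged coefficient ring ($M_s(D)$ or $\prod_i D_i$) and on how the polarization $\psi^s$, respectively $\bigoplus_i\psi_i^{s_i}$, is formed as an orthogonal sum; once these are pinned down the algebraic steps are forced.

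For (1), I would identify $V^s\cong \Q^s\otimes_\Q V$ as a module over $M_s(D)\cong M_s(\Q)\otimes_\Q D$ in the obvious way, and polarize via $\psi^s:=\bigoplus_{i=1}^s \psi$. The natural $G_K$-action on $M_s(D)$ extends $\rho_e$ trivially on the matrix factor. Given $g\in \gpDL_K^\tau(V^s,\psi^s,M_s(D))$, applying the twist identity at $\beta\in M_s(\Q)$ (on which $\rho_e(\tau)$ is trivial) shows $g$ commutes with $M_s(\Q)$; Schur's lemma then forces $g=\id_{\Q^s}\otimes g_0$ for a unique $g_0\in\GL_V$. Specialising the twist identity to $\beta=1\otimes\alpha$ with $\alpha\in D$ yields $g_0\alpha g_0^{-1}=\rho_e(\tau)(\alpha)$, and preservation of $\psi^s$ by the diagonal $g$ is equivalent to preservation of $\psi$ by $g_0$. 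The assignment $g\mapsto g_0$ is then an isomorphism of closed subschemes of $\Iso$.

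For (2), the diagonal $G_K$-action on $D=\prod_i D_i$ fixes each central idempotent $e_i$. Hence for $g\in\gpDL_K^\tau(V,\psi,D)$, the twist identity at $\beta=e_i$ reduces to $ge_ig^{-1}=e_i$, so $g$ preserves the decomposition $V=\bigoplus_i V_i$ (with $V_i=e_iV$). Writing $g=(g_1,\dots,g_t)$ and taking $\psi=\bigoplus_i\psi_i$, preservation of $\psi$ is equivalent to $g_i\in\Iso_{(V_i,\psi_i)}$ for each $i$, because the $V_i$ are mutually $\psi$-orthogonal. Applying the twist identity to $\beta=(0,\dots,\beta_i,\dots,0)$ then decouples into the componentwise conditions $g_i\beta_ig_i^{-1}=\rho_e(\tau)(\beta_i)$, producing the asserted product isomorphism.

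Statement (3) follows by combining the two: (2) applied to $(V,\psi)=\bigoplus_i(V_i^{s_i},\psi_i^{s_i})$ with $D=\prod_i M_{s_i}(D_i)$ identifies $\gpDL_K^\tau(V,\psi,D)$ with $\prod_i\gpDL_K^\tau(V_i^{s_i},\psi_i^{s_i},M_{s_i}(D_i))$, and (1) applied factor by factor rewrites each piece as $\gpDL_K^\tau(V_i,\psi_i,D_i)$. The hardest part is really the bookkeeping for the Galois action on the larger coefficient ring; once one commits to the convention that $G_K$ acts trivially on $M_s(\Q)$ and diagonally on products, each step is essentially automatic.
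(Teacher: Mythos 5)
Your proposal is correct and follows essentially the same route as the paper: the paper's (very terse) proof of (1) likewise uses that $M_s(\Q)\subseteq M_s(D)$ is fixed by $\rho_e(\tau)$, so that any element of $\gpDL_{K}^{\tau}(V^s,\psi^s,M_s(D))$ commutes with $M_s(\Q)$ and hence lies in the diagonal copy $\Delta(\Iso_{(V,\psi)})$, and its proof of (2) uses the subring $\prod_i\Q\subset\prod_i D_i$ (i.e.\ the Galois-fixed central idempotents) exactly as you do, with (3) obtained by combining the two. Your write-up just supplies the commutant and orthogonality details that the paper leaves implicit.
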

\begin{proof}
1. Let $\Delta$ be the homomorphism that maps 
$\Iso_{(V, \psi)}$ naturally into 
\begin{equation}
{{\rm diag}} (\Iso_{(V, \psi)}, \dots,
\Iso_{(V, \psi)}) \subseteq \Iso_{(V^s, \psi^s)}.
\end{equation}
Since $\Q \subseteq D$, we have $M_{s} (\Q) \subseteq M_{s} (D).$ Directly from the
definition of the twisted decomposable Lefschetz group, we get
$\gpDL_{K}^{\tau} (V^s, \psi^s, M_{s} (D)) \cong \Delta (\gpDL_{K}^{\tau} (V, \psi, D)) 
\cong \gpDL_{K}^{\tau} (V, \psi, D)$.

\noindent
2. The proof is very similar to the proof of 1, using the fact that
$\prod_{i=1}^s \, \Q \subset \prod_{i=1}^t \, D_i.$  
  
\noindent
3. This follows immediately from 1 and 2. 
\end{proof}

\begin{remark}\label{Lefschetz group standard properties} 
Theorem \ref{Decomposible Lefschetz group standard properties} 
remains true if we replace $\gpDL_{K}^{\tau} (V^{\prime}, \psi^{\prime}, D^{\prime})$ 
with $\gpDL_{K}^{\tau} (V^{\prime}, \psi^{\prime}, D^{\prime})^{\circ}$
for all polarized Hodge structures $V^{\prime}, \psi^{\prime}$ and corresponding rings 
$D^{\prime}$ with Galois actions that appear in the theorem. Since we have $\gpL (V^{\prime}, \psi^{\prime}, D^{\prime}) = 
\gpDL_{K}^{\id} (V^{\prime}, \psi^{\prime}, D^{\prime})^{\circ}$, the Lefschetz group
satisfies properties 1--3 of Theorem 
\ref{Decomposible Lefschetz group standard properties}.
\end{remark}

\begin{remark}\label{decomposible bigger twisted Lefschetz}
Observe that we have
\begin{equation}
\gpDL_K(V, \psi, D) := \{g \in \Iso_{(V, \psi)}: \, \exists\,  \tau \in G_K \, 
\forall \, \beta \, \in D \quad  
g \beta g^{-1} = \rho_{e} (\tau) (\beta) \, \}
\nonumber\end{equation} 
Changing quantifiers we get another group scheme
\begin{equation}
\widetilde{DL}_K(V, \psi, D) := 
\{g \in \Iso_{(V, \psi)}: \, \forall \, \beta \, \in D \, \exists \, \tau \in G_K  
\quad g \beta g^{-1} = \rho_{e} (\tau) (\beta) \, \}
\label{decomposable bigger twisted Lefschetz1}\end{equation} 
Observe that 
$\gpDL_K(V, \psi, D) \subseteq \widetilde{\gpDL}_K(V, \psi, D).$
\end{remark}

\begin{remark}
Observe that (\ref{DH and CD}) implies that
\begin{equation}
\gpH(V, \psi) \subseteq \gpDL^{\id}_{K}(V, \psi, D) \subseteq \gpDL_K(V, \psi, D).
\label{DHA and DLKA}
\end{equation}
\end{remark}
\medskip

%----\section{l-adic representations associated with Hodge structures}----
\section{Hodge structures associated with $l$-adic representations}

Let $(V, \psi)$ be a rational pure polarized Hodge structure of weight $n \not= 0.$ 
Put $V_l \cong V \otimes_{\Q} \Q_l$ and 
$\psi_{l} := \psi \otimes_{\Q} \Q_l.$ Let 
$(V_l, \psi_{l}) := (V \otimes_{\Q} \Q_l, \psi \otimes_{\Q} \Q_l)$ and assume that 
the bilinear form $\psi_l\, :\, V_l \times V_l \rightarrow \Q_l(-n)$  is $G_K$-equivariant and 
the family of $l$-adic representations 
\begin{equation}
\rho_l \, :\, G_K \rightarrow \GIso (V_l, \psi_{l})
\end{equation}
is of Hodge-Tate type and strictly compatible in the sense of Serre.
We assume that outside of a finite 
set of primes of $\mathcal{O}_K,$ for each $v$ the complex absolute values of the eigenvalues of a Frobenius element 
at $v$ are $q_{v}^{\frac{n}{2}}.$  
\medskip

The form $\psi_l$ is $(-1)^n$-symmetric by the assumptions on the Hodge structure.
Hence
\begin{equation}
\GIso (V_l, \psi_{l}) \,\, = \,\,
\left\{
\begin{array}{lll}
\GO (V_l, \psi_{l})&\rm{if}&n \, \rm{even;}\\
\GSp (V_l, \psi_{l})&\rm{if}&n \, \rm{odd.}\\
\end{array}\right.
\end{equation}
Let $\chi$ be the character defined in (\ref{def of GIso}) and 
let $\chi_{c} \, : G_K \rightarrow \Z_{l}^{\times}$ be the cyclotomic character. Then by the $G_K$-equivariance of 
$\psi_l$ we obtain: 
\begin{equation}
\chi \circ \rho_{l}  \, = \, \chi_{c}^{-n}.
\label{comatibility of chi with chi-cycl.}
\end{equation}

\begin{remark}\label{properties of Hodge-Tate representations } 
For a representation $\rho_l$ of Hodge-Tate type, the theorem of Bogomolov on homotheties 
(cf.\ \cite[Prop.\ 2.8]{Su}) applies, meaning that $\rho_{l}(G_K) \, \cap\,  \Q_{l}^{\times} \, {\rm{Id}}_{V_l}$ is open in $\Q_{l}^{\times} \, {\rm{Id}}_{V_l}.$ 
Moreover, Bogomolov proved \cite[Th{\' e}or{\` e}me 1]{Bo} that $\rho_{l} (G_K)$ is open 
in $G_{l, K}^{\alg}(\Q_l).$
\end{remark}

\begin{remark}\label{Hodge-Tate representations in etale cohomology} 
Strictly compatible families of $l$-adic representations of Hodge-Tate type arise naturally from {\' e}tale cohomology.
Indeed, if $X/K$ is a proper scheme and ${\overline X} := X \otimes_{K} {\overline F}$ 
then $V_{l, \et}^{i} := H^{i}_{\et} ({\overline X}, \, \Q_l)$ is potentially semistable for each
$G_{K_v}$-representation for every $v | l$ (see \cite[Cor.\ 2.2.3]{Ts1}, \cite{Ts2}). 
Hence the representation 
\begin{equation}
\rho_{l, \et}^{i} \, :\, G_K \rightarrow \GL (V_{l, \et}^{i})
\label{etale cohom. representation}
\end{equation}
is of Hodge-Tate type (cf.\ \cite[p.\ 603]{Su}). 
\end{remark}
\medskip

\begin{definition} Let
\begin{equation}
G_{l, K}^{\alg} := G_{l, K}^{\alg}(V, \psi) \subset  \GIso_{(V_{l}, \psi_{l})}
\label{Def of GlKalg}\end{equation} 
be the Zariski closure of $\rho_{l} (G_K)$ in $\GIso_{(V_{l}, \psi_{l})}.$ Put:
\begin{equation}
\rho_{l} (G_K)_{1} := \rho_{l} (G_K) \cap \Iso_{(V_{l}, \psi_{l})},
\label{Def of rholGK1}\end{equation}
\begin{equation}
G_{l, K, 1}^{\alg} := G_{l, K, 1}^{\alg}(V, \psi) :=  G_{l, K}^{\alg} \cap
\Iso_{(V_{l}, \psi_{l})}.
\label{Def of GlK1alg}\end{equation}
\end{definition}
\bigskip

\noindent
By the theorem of Bogomolov on homotheties %\eqref{Def of GlK1alg} 
(see Remark 
\ref{properties of Hodge-Tate representations }),  there is an exact sequence
\begin{equation}
1 \,\, {\stackrel{}{\longrightarrow}} \,\, G_{l, K, 1}^{\alg} \,\, {\stackrel{}{\longrightarrow}} \,\,  G_{l, K}^{\alg} \,\, {\stackrel{\chi}{\longrightarrow}} \,\, \G_m \,\, {\stackrel{}{\longrightarrow}} \,\, 1.
\label{The exact sequence for GlK1alg and GlKalg}
\end{equation}

\begin{remark}\label{if rhol semisimple then GlK1alg reductive}
If $\rho_l$ is semisimple, then $G_{l, K}^{\alg}$ is reductive; hence in this case the algebraic group $G_{l, K, 1}^{\alg}$ is also reductive, by virtue of being the kernel of a homomorphism from a reductive group to a torus. 
\end{remark}
\medskip

Naturally $\rho_{l} (G_K)_{1} \subseteq G_{l, K, 1}^{\alg}$. 
Let $K \subseteq L \subset \overline{F}$ be a tower of extensions with
$L/K$ finite.
Consider the following commutative diagram with left and middle vertical arrows injective:

\begin{equation}
{\xymatrix{
1 \ar[r]^{} & \,\, G_{l, K, 1}^{\alg}  \ar[r]^{}  
& \quad G_{l, K}^{\alg}  \ar[r]^{\chi}  \quad & \,\,
\G_{m} \ar[r]^{} & 1\\ 
1 \ar[r]^{} & \,\, G_{l, L, 1}^{\alg} \ar@<0.1ex>[u]_{}  \ar[r]^{}  & 
\quad  G_{l, L}^{\alg} \ar@<0.1ex>[u]_{}  \ar[r]^{\chi}  \quad & 
\,\, \G_m \ar@<0.1ex>[u]_{=} \ar[r]^{} & 1 \\
}\label{diagram GlL1 maps to GlK1 and GlL maps to GlK}}
\end{equation}

It is clear that $G_{l, K, 1}^{\alg} \, \cap \,G_{l, L}^{\alg} \, = \,  G_{l, L, 1}^{\alg}.$
If $L/K$ is Galois, then it follows from the diagram \eqref{diagram GlL1 maps to GlK1 and GlL maps to GlK} that there is 
a monomorphism:

\begin{equation}
j_{L/K} \, : \,\,\,\, \rho_{l} (G_K)_1 \, / \, \rho_{l} (G_L)_1 \,\,\, \hookrightarrow \,\,\, \rho_{l} (G_K) \, /\,  \rho_{l} (G_L).
\label{diagram rho GLl1 maps to rho GKll and rho GLl maps to rho GK}\end{equation}
\medskip

\begin{proposition}
Let $K \subset L \subset M$ with $M/K$ and $L/K$ Galois. The map $j_{M/K}$ is an isomorphism
if and only if $j_{M/L}$ and $j_{L/K}$ are isomorphisms.
\label{M over K gives iso iff L over K and M over L give iso}\end{proposition}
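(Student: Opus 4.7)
The plan is to reduce the claim to a short five-lemma applied to a commutative diagram with exact rows in which $j_{M/L}$, $j_{M/K}$, $j_{L/K}$ appear as the three vertical maps. Write $H := \rho_l(G_K)$, $H' := \rho_l(G_L)$, $H'' := \rho_l(G_M)$ and $H_1, H_1', H_1''$ for their intersections with $\Iso_{(V_l, \psi_l)}$. Because $M/K$ and $L/K$ are Galois (and hence so is $M/L$), the groups $G_L$ and $G_M$ are normal in $G_K$ and $G_M$ is normal in $G_L$. Pushing forward by $\rho_l$ gives the normal inclusions $H'' \triangleleft H' \triangleleft H$, and a short verification (using that $\Iso_{(V_l, \psi_l)}$ is closed under conjugation) shows the same for the ``$_1$'' variants: $H_1'' \triangleleft H_1' \triangleleft H_1$.

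Next I would record the key identity $H_1' \cap H'' = H_1''$, which is immediate since $G_M \subseteq G_L$. Together with the analogous identities $H_1 \cap H' = H_1'$ and $H_1 \cap H'' = H_1''$, this implies that the natural maps on cosets $H_1'/H_1'' \to H'/H''$, $H_1/H_1'' \to H/H''$ and $H_1/H_1' \to H/H'$ are all injective, and that they coincide respectively with $j_{M/L}$, $j_{M/K}$, $j_{L/K}$. Applying the third isomorphism theorem in both rows produces the commutative diagram
\[
\begin{array}{ccccccccc}
1 & \to & H_1'/H_1'' & \to & H_1/H_1'' & \to & H_1/H_1' & \to & 1 \\
& & \downarrow j_{M/L} & & \downarrow j_{M/K} & & \downarrow j_{L/K} & & \\
1 & \to & H'/H'' & \to & H/H'' & \to & H/H' & \to & 1
\end{array}
\]
with exact rows and injective vertical arrows.

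The direction ($\Leftarrow$) is then the short five lemma, which holds for groups and not only in abelian categories. For ($\Rightarrow$), assume $j_{M/K}$ is an isomorphism. Given a class $\overline{a} \in H/H'$, I would lift it to $\overline{b} \in H/H''$, apply $j_{M/K}^{-1}$ to obtain $\overline{c} \in H_1/H_1''$, and then project $\overline{c}$ to $H_1/H_1'$; commutativity of the right square shows $j_{L/K}$ hits $\overline{a}$, giving surjectivity of $j_{L/K}$. For $j_{M/L}$, I would take $\overline{a} \in H'/H''$, view it inside $H/H''$, apply $j_{M/K}^{-1}$ to get $\overline{c} \in H_1/H_1''$, and then use the already-established bijectivity of $j_{L/K}$ to show that $\overline{c}$ actually lies in $H_1'/H_1''$ (because its image in $H_1/H_1'$ maps to the trivial class in $H/H'$, hence is itself trivial).

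There is no genuine obstacle: the entire argument is a bookkeeping exercise in elementary group theory. The only point requiring a touch of care is verifying that intersection with $\Iso_{(V_l, \psi_l)}$ is compatible with the Galois normal filtration --- specifically that $H_1'' = H_1' \cap H'' = H_1 \cap H''$ --- but these identities follow at once from the inclusion $G_M \subseteq G_L \subseteq G_K$ of Galois groups and the definition of the subscript-$1$ subgroups.
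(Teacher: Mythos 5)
Your argument is correct and complete: the identities $H_1\cap H'=H_1'$, $H_1\cap H''=H_1''$, $H_1'\cap H''=H_1''$ do hold, the normality claims follow from $G_M\triangleleft G_L\triangleleft G_K$ together with the fact that $\Iso_{(V_l,\psi_l)}$ is a (normal) subgroup of $\GIso_{(V_l,\psi_l)}$, and the two-row exact diagram plus the short five lemma for groups (forward direction) and the lifting argument (reverse direction) give both implications. The paper offers no proof of this proposition at all --- it is stated as an evident consequence of the diagram \eqref{diagram GlL1 maps to GlK1 and GlL maps to GlK} --- so there is nothing to compare against; your write-up supplies exactly the elementary diagram chase the authors left implicit, and the only cosmetic remark is that in the surjectivity argument for $j_{M/L}$ you need only the injectivity of $j_{L/K}$ (which holds unconditionally by \eqref{diagram rho GLl1 maps to rho GKll and rho GLl maps to rho GK}), not its bijectivity.
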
 

\noindent
We observe that for any finite Galois extension $L/K$ the natural map is an epimorphism $\Zar_{L/K} := \Zar_{l, \, L/K}$:
\begin{equation}
{\xymatrix{
\Zar_{L/K} \, : \,\,\,\, \rho_{l} (G_K) \, /\,  \rho_{l} (G_L) \,\, \ar@{>>}[r]^{} & \quad G_{l, K}^{\alg} \, / \, G_{l, L}^{\alg}}}.
\label{GK mod GL surjects to GKalg mod GLalg}
\end{equation}

The proofs of the following three results:
Theorem \ref{pre Serre theorem}, Proposition \ref{L0realizing conn comp}, Theorem \ref{equality of conn comp for Glalg and Glalg1}, are similar to the proofs of  
\cite[Theorem 3.1, Proposition 3.2, Theorem 3.3]{BK}.
Theorem \ref{equality of conn comp for Glalg and Glalg1} is a generalization of the result of Serre \cite[\S 8.3.4]{Se12}. As usual, for an algebraic group $G$ we put $\pi_{0} (G) := G / G^{\circ}.$  

\begin{theorem}
Let $K \subseteq L \subset \overline{F}$ with
$L/K$ finite Galois. The following natural map is an isomorphism of finite groups:
\begin{equation}
i_{L/K} \,\, :\,\, G_{l, K, 1}^{\alg} /G_{l, L, 1}^{\alg}
\quad {\stackrel{\cong}{\longrightarrow}} \quad
G_{l, K}^{\alg} /G_{l, L}^{\alg}.
\label{iL is an isomorphism}
\end{equation}
In particular there are the following equalities:
\begin{equation}
(G_{l,L}^{\alg})^\circ = (G_{l,K}^{\alg})^\circ \quad {\rm{and}}\quad 
(G_{l,L,1}^{\alg})^\circ = (G_{l,K,1}^{\alg})^\circ.
\label{GlalgL0 is GlalgK0}\end{equation}
\label{pre Serre theorem}\end{theorem}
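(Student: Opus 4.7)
The strategy is to exploit the commutative diagram of exact sequences in \eqref{diagram GlL1 maps to GlK1 and GlL maps to GlK}, treating the character $\chi$ as the main tool, and then deduce the statements about connected components from the resulting finiteness.

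First I would verify that $G_{l, L}^{\alg}$ is normal in $G_{l, K}^{\alg}$ (and analogously $G_{l, L, 1}^{\alg}$ in $G_{l, K, 1}^{\alg}$), so that the quotients in the statement make sense as groups. Since $L/K$ is Galois, $\rho_{l}(G_L)$ is normal in $\rho_{l}(G_K)$; for each $g \in \rho_{l}(G_K)$, conjugation preserves $\rho_{l}(G_L)$ and hence preserves its Zariski closure $G_{l, L}^{\alg}$. The normalizer of $G_{l, L}^{\alg}$ in $G_{l, K}^{\alg}$ is therefore Zariski-closed and contains the dense subset $\rho_{l}(G_K)$, so it equals all of $G_{l, K}^{\alg}$.

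For the bijectivity of $i_{L/K}$, injectivity is immediate from the identity $G_{l, L, 1}^{\alg} = G_{l, K, 1}^{\alg} \cap G_{l, L}^{\alg}$. For surjectivity I would establish the stronger product decomposition $G_{l, K}^{\alg} = G_{l, K, 1}^{\alg} \cdot G_{l, L}^{\alg}$: applying the exact sequence \eqref{The exact sequence for GlK1alg and GlKalg} to $L$ gives $\chi(G_{l, L}^{\alg}) = \G_m$, so for any $g \in G_{l, K}^{\alg}$ one can choose $h \in G_{l, L}^{\alg}$ with $\chi(h) = \chi(g)$; then $g h^{-1} \in \ker \chi = G_{l, K, 1}^{\alg}$, which exhibits $g \cdot G_{l, L}^{\alg}$ in the image of $i_{L/K}$. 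Finiteness of both quotients is then a consequence of the surjection $\Zar_{L/K}$ in \eqref{GK mod GL surjects to GKalg mod GLalg}, whose source $\rho_{l}(G_K)/\rho_{l}(G_L)$ is a quotient of the finite group $\Gal(L/K)$.

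The equalities \eqref{GlalgL0 is GlalgK0} follow formally once finiteness is established: a closed subgroup of finite index in an algebraic group contains the identity component, so $G_{l, L}^{\alg} \supseteq (G_{l, K}^{\alg})^\circ$, and combined with the automatic reverse inclusion this yields $(G_{l, L}^{\alg})^\circ = (G_{l, K}^{\alg})^\circ$; the subscript-1 case is identical. The chief technical point requiring care is the surjectivity step, where one must arrange that the scheme-theoretic surjectivity of $\chi \colon G_{l, L}^{\alg} \to \G_m$ translates into surjectivity on the relevant points; this is most cleanly handled by carrying out the coset argument on $\overline{\Q}_l$-points (where smoothness of $\G_m$ guarantees pointwise lifting of $\chi$) and then descending via the finite étale structure of both quotients.
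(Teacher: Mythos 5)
Your argument is correct and is essentially the same as the paper's: the paper deduces that $i_{L/K}$ is an isomorphism from a $3\times 3$ commutative diagram whose rows are the definitional exact sequences for the quotients and whose first two columns are the exact sequences \eqref{The exact sequence for GlK1alg and GlKalg} for $L$ and $K$, invoking a snake-lemma diagram chase; your injectivity-from-intersection and surjectivity-via-$\chi$-lifting steps are precisely that chase written out by hand, and your finiteness and identity-component observations match the paper's opening two sentences. The extra care you take about normality and about passing from scheme-level surjectivity of $\chi$ to pointwise surjectivity is not in the paper's terse proof but is consistent with it.
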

\begin{proof}
It is clear that $G_{l, L}^{\alg} \,\, \triangleleft \,\, G_{l, K}^{\alg}$ and $G_{l, L, 1}^{\alg}  \,\, \triangleleft \,\, G_{l, K, 1}^{\alg}.$ 
On the other hand, there is a surjective homomorphism
$\rho_l(G_K)/\rho_l(G_{L}) \to G_{l,K}^{\alg}/G_{l,L}^{\alg}$, so $G_{l,L}^{\alg}$ is a 
subgroup of $G_{l,K}^{\alg}$
of finite index. In particular, $(G_{l,L}^{\alg})^\circ = (G_{l,K}^{\alg})^\circ$. 
\medskip

\noindent
The following commutative diagram has exact rows. The left and the middle 
columns are also exact cf. (\ref{The exact sequence for GlK1alg and GlKalg}). 

$$
\xymatrix{
\quad & \quad 1 \ar@<0.1ex>[d]^{}  \quad & \quad 1 \ar@<0.1ex>[d]^{} \quad & 
1 \ar@<0.1ex>[d]^{} \\ 
1 \ar[r]^{} \quad & \quad G_{l, L, 1}^{\alg} \ar@<0.1ex>[d]^{} \ar[r]^{}  \quad 
& \quad  G_{l, K, 1}^{\alg} \ar@<0.1ex>[d]^{} \ar[r]^{}  \quad 
& \quad G_{l, K, 1}^{\alg} /G_{l, L, 1}^{\alg}  
\ar@<0.1ex>[d]^{i_{L}}_{\cong} \ar[r]^{}  \quad 
& \,\, 1 \\ 
1 \ar[r]^{} \quad & \quad G_{l, L}^{\alg} \ar@<0.1ex>[d]^{\chi} \ar[r]^{}  \quad 
& \quad  G_{l, K}^{\alg} \ar@<0.1ex>[d]^{\chi} \ar[r]^{}  \quad 
& \quad G_{l, K}^{\alg} /G_{l, L}^{\alg}   \ar@<0.1ex>[d]^{} \ar[r]^{}  \quad 
&  \,\, 1 \\
1 \ar[r]^{} \quad & \quad \G_m  \ar@<0.1ex>[d]^{}  \ar[r]^{=}  
\quad & \quad \G_m  \ar@<0.1ex>[d]^{}  \ar[r]^{} \quad & 1\\
 \quad & \quad 1   \quad & 1 \\}
\label{diagram before the Serre theorem}$$
Then a diagram chase (as in the snake lemma) shows that the third column is also 
exact, so the map $i_L$ is an isomorphism. 
Hence it is clear that $(G_{l,L,1}^{\alg})^\circ = (G_{l,K,1}^{\alg})^\circ$.
\end{proof}

\noindent
\begin{proposition} \label{L0realizing conn comp} Let the weight of the Hodge structure be the odd integer $n = 2m + 1$. There is a finite Galois extension $L_{0}/K$ 
such that $G_{l, L_{0}}^{\alg} = (G_{l, K}^{\alg})^{\circ}$ and $G_{l, L_{0}, 1}^{\alg} = ({G_{l, K,
1}^{\alg}})^{\circ}.$ 
\end{proposition}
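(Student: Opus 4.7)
The plan is to adapt the strategy of \cite[Proposition 3.2]{BK}: first construct $L_0$ from the component group of $G_{l,K}^{\alg}$, deduce the first equality from Theorem \ref{pre Serre theorem}, and then use the odd weight hypothesis together with Bogomolov's theorem on homotheties to force $\ker(\chi|_{(G_{l,K}^{\alg})^\circ})$ to be connected.

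I would take $L_0/K$ to be the finite Galois extension corresponding to the open kernel of the continuous surjection
\[
G_K \xrightarrow{\rho_l} G_{l,K}^{\alg}(\Q_l) \twoheadrightarrow \pi_0(G_{l,K}^{\alg}),
\]
where surjectivity uses Zariski density of $\rho_l(G_K)$ in $G_{l,K}^{\alg}$. By construction $\rho_l(G_{L_0}) \subseteq (G_{l,K}^{\alg})^\circ(\Q_l)$, so $G_{l,L_0}^{\alg} \subseteq (G_{l,K}^{\alg})^\circ$; combined with $(G_{l,L_0}^{\alg})^\circ = (G_{l,K}^{\alg})^\circ$ from Theorem \ref{pre Serre theorem}, this gives the first equality. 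For the second, one has
\[
G_{l,L_0,1}^{\alg} = G_{l,L_0}^{\alg} \cap \Iso_{(V_l,\psi_l)} = \ker\bigl(\chi|_{(G_{l,K}^{\alg})^\circ}\bigr),
\]
whose identity component is already $(G_{l,K,1}^{\alg})^\circ$ by Theorem \ref{pre Serre theorem}, so it remains to show this kernel is connected as an algebraic group.

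Here the odd weight enters. Bogomolov's theorem (Remark \ref{properties of Hodge-Tate representations }) places the scalars $\G_m \cdot \mathrm{Id}$ inside $(G_{l,K}^{\alg})^\circ$, providing the cocharacter $\lambda_h \colon z \mapsto z\,\mathrm{Id}$, which by \eqref{chi of homothety} satisfies $\chi \circ \lambda_h(z) = z^2$. On the other hand, the Hodge-Tate decomposition of $V_l$ at a place $v \mid l$ yields a cocharacter $\mu_l \colon \G_m \to (G_{l,K}^{\alg})^\circ$ over $\bar{\Q}_l$ acting as $z^{-p}$ on the Hodge-Tate piece of weight $p$; repeating the calculation of \eqref{mu infty and psi} gives $\chi \circ \mu_l(z) = z^{-n}$. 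Since $n = 2m+1$ is odd, $\gcd(2,n) = 1$, and any Bezout combination $a\lambda_h + b\mu_l$ with $2a - nb = 1$ produces a cocharacter with $\chi$-pairing $1$. Thus $\chi$ is primitive in $X^*((G_{l,K}^{\alg})^\circ)$, which forces $\ker(\chi|_{(G_{l,K}^{\alg})^\circ})$ to be connected.

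The main obstacle is verifying that the Hodge-Tate cocharacter $\mu_l$ genuinely factors through $(G_{l,K}^{\alg})^\circ$ rather than merely through $\GL_{V_l}$, and that $\chi \circ \mu_l$ equals $z \mapsto z^{-n}$; this is the $l$-adic counterpart of the archimedean computation \eqref{mu infty and psi} and relies on the Hodge-Tate formalism applied to $\rho_l|_{G_{K_v}}$. Without producing a cocharacter whose $\chi$-pairing is coprime to $2$, one cannot exclude that $\ker(\chi|_{(G_{l,K}^{\alg})^\circ})$ acquires an extra connected component containing $-\mathrm{Id}$, so the oddness of $n$ is essential to the argument rather than cosmetic.
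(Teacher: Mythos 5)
Your proposal is correct and follows essentially the same route as the paper: the same construction of $L_0$ from the kernel of $G_K \to \pi_0(G_{l,K}^{\alg})$, the same appeal to Theorem \ref{pre Serre theorem} for the first equality, and the same two cocharacters (the homothety line supplied by Bogomolov, with $\chi$-pairing $2$, and Serre's Hodge--Tate cocharacter $h$, with $\chi$-pairing $\pm n$) combined via oddness of $n$ to split $\chi$ over $(G_{l,K}^{\alg})^{\circ}$. The paper's explicit combination $s(x) = h(x)w(x)^{-m}$ is exactly your Bezout combination, and its path-lifting argument on $\C$-points is just a topological rendering of your observation that a split character on a connected group has connected kernel; the technical point you flag (that $h$ lands in $G_{l,L_0}^{\alg}$) is likewise handled in the paper only by citation to Serre.
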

\begin{proof}
Since the subscheme $({G_{l, K}^{\alg}})^{\circ}$ is open and closed in 
$G_{l, K}^{\alg}$ and $\rho_l$ is continuous, 
we can find a finite Galois extension $L_0/K$ such that
$\rho_l(G_{L_0}) \subset (G_{l, K}^{\alg})^{\circ}$. Hence $G_{l, L_{0}}^{\alg} \subseteq ({G_{l, K}^{\alg}})^{\circ}.$ Since  we already have the reverse inclusion, we obtain the first desired equality. 

Consider the restriction of the $l$-adic representation to the base field $L_0.$
Using the Hodge-Tate property of $V_l$, after taking $\C$ points in the exact sequence  
(\ref{The exact sequence for GlK1alg and GlKalg}) one can apply the homomorphism $h$ 
\cite[p.\ 114]{Se12} defined by Serre to get the homomorphism:
\begin{equation}
h : \G_m (\C) \rightarrow  G_{l, L_{0}}^{\alg} (\C)
\nonumber\end{equation}
such that for all $x \in \G_m(\C)$, $h(x)$ acts by multiplication by $x^p$ on the subspace $V^{p, n-p}$. One checks that $\chi (h(x)) = x^n$ for every $x \in \G_m (\C)$ (see the
diagram preceding Definition~\ref{Definition Mumford Tate}).
Let 
\begin{equation}
w : \G_m (\C) \rightarrow  G_{l, L_{0}}^{\alg} (\C)\\
\nonumber\end{equation}
$$w(x) = x \, \rm{Id}_{V_{\C}}$$ 
be the diagonal homomorphism; this is well-defined thanks to
Remark~\ref{properties of Hodge-Tate representations }.
We know (Remark \ref{the image of homothety via chi}) that $\chi (w(x)) = x^2$ for every 
$x \in \G_m (\C).$ Hence the homomorphism
\begin{equation}
s : \G_m (\C) \rightarrow  G_{l, L_{0}}^{\alg} (\C)
\nonumber
\end{equation}
$$s(x) := h(x) w(x)^{-m}$$ 
is a splitting of $\chi$ in the following exact sequence:
  
\begin{equation}
1 \,\, {\stackrel{}{\longrightarrow}} \,\, G_{l, L_{0}, 1}^{\alg} (\C) \,\, {\stackrel{}{\longrightarrow}} \,\,  G_{l, L_{0}}^{\alg} (\C) \,\, {\stackrel{\chi}{\longrightarrow}} \,\, \C^{\times} \,\, {\stackrel{}{\longrightarrow}} \,\, 1.
\nonumber\end{equation}
Observe that $G_{l, L_{0}}^{\alg} (\C)$ is a connected Lie group. Take any two points 
$g_0$ and $g_1$ in $G_{l, L_{0}, 1}^{\alg} (\C).$ There is a path $\alpha (t) \in 
G_{l, L_{0}}^{\alg} (\C)$ connecting $g_0$ and $g_1$, i.e., $\alpha (0) = g_0$ and
$\alpha (1) = g_1.$ Define a new path 
$$\beta (t) := {s (\chi (\alpha (t)))}^{-1} \alpha (t) \in G_{l, K_{0}}^{\alg} (\C)$$
Observe that: $\chi (\beta (t)) := \chi ({s (\chi (\alpha (t)))}^{-1}) \chi(\alpha (t)) = 
\chi (\alpha (t))^{-1} \chi(\alpha (t)) = 1.$ We easily check that
$\beta (0) = g_0$ and $\beta (1) = g_1.$ Hence $\beta (t) \in G_{l, K_{0}, 1}^{\alg} (\C)$ connects
$g_0$ and $g_1.$ It follows that $G_{l, L_{0}, 1}^{\alg}$ is connected, hence 
$G_{l, L_{0}, 1}^{\alg} = (G_{l, K, 1}^{\alg})^{\circ}.$ \end{proof}

\noindent
\begin{theorem}\label{equality of conn comp for Glalg and Glalg1} 
Let $n$ be odd. The following natural map is an isomorphism:
$$i_{CC} \,\, : \,\,  \pi_{0} (G_{l, K, 1}^{\alg}) \,\,\, 
{\stackrel{\cong}{\longrightarrow}} \,\,\, \pi_{0} (G_{l, K}^{\alg}).$$ 
\end{theorem}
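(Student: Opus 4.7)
The plan is to package together Theorem~\ref{pre Serre theorem} (which gives an isomorphism of finite quotients for any finite Galois extension $L/K$) and Proposition~\ref{L0realizing conn comp} (which, crucially, uses the oddness of $n$ to produce an $L_0$ realizing the identity components of \emph{both} $G_{l,K}^{\alg}$ and $G_{l,K,1}^{\alg}$ simultaneously). The whole point is that the hypothesis ``$n$ odd'' has already done all the real work in Proposition~\ref{L0realizing conn comp}; once we have an $L_0$ that trivializes both $\pi_0$'s at once, the claim about $i_{CC}$ is immediate.

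First I would apply Proposition~\ref{L0realizing conn comp} to choose a finite Galois extension $L_0/K$ inside $\overline{F}$ such that
\[
G_{l,L_0}^{\alg} = (G_{l,K}^{\alg})^{\circ}
\quad\text{and}\quad
G_{l,L_0,1}^{\alg} = (G_{l,K,1}^{\alg})^{\circ}.
\]
Then by the definition of $\pi_0$ for an algebraic group, the two natural quotient maps become
\[
G_{l,K,1}^{\alg}/G_{l,L_0,1}^{\alg} = \pi_0(G_{l,K,1}^{\alg}),
\qquad
G_{l,K}^{\alg}/G_{l,L_0}^{\alg} = \pi_0(G_{l,K}^{\alg}).
\]

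Next I would invoke Theorem~\ref{pre Serre theorem} for the extension $L_0/K$, which supplies an isomorphism
\[
i_{L_0/K}\colon G_{l,K,1}^{\alg}/G_{l,L_0,1}^{\alg}
\xrightarrow{\ \cong\ }
G_{l,K}^{\alg}/G_{l,L_0}^{\alg}.
\]
Under the identifications from the previous step, this is precisely the map $i_{CC}$ induced by the inclusion $G_{l,K,1}^{\alg}\hookrightarrow G_{l,K}^{\alg}$ (one checks naturality by tracing around the commutative diagram from the proof of Theorem~\ref{pre Serre theorem}, whose rows and the first two columns fit together so that the map on the third column is exactly induced by the inclusion). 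Hence $i_{CC}$ is an isomorphism, as claimed.

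The only subtle point in this argument is verifying that $i_{L_0/K}$, viewed through the identifications of both sides with $\pi_0$, really does coincide with $i_{CC}$; the main potential obstacle would be confusing two a priori different maps between the same finite groups. This is handled by observing that both arise from the same inclusion of group schemes (reduced modulo their respective identity components), and compatibility is forced by the exact commutative diagram displayed in the proof of Theorem~\ref{pre Serre theorem}. Everything else is formal once Proposition~\ref{L0realizing conn comp} supplies the simultaneous choice of $L_0$, which is where the odd weight hypothesis is essential.
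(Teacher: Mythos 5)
Your proof is correct and follows exactly the paper's own argument: choose $L_0$ via Proposition~\ref{L0realizing conn comp} (where the odd-weight hypothesis is used), identify the resulting finite quotients with the respective $\pi_0$'s, and then apply Theorem~\ref{pre Serre theorem} to the extension $L_0/K$, noting that $i_{L_0/K}$ is precisely $i_{CC}$ under these identifications. Nothing further is needed.
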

\begin{proof}
Choose $L_0$ as in Proposition~\ref{L0realizing conn comp}. Put $L := L_0$ in 
the diagram of the proof of Theorem \ref{pre Serre theorem}. Then 
$i_{CC} = i_{L_0}$, which is an isomorphism by Theorem \ref{pre Serre theorem}.
\end{proof}

\medskip

\begin{remark}
The natural continuous action by left translation:
\begin{equation}
G_K \times \, \pi_{0} ( G_{l, K}^{\alg} ) \rightarrow \pi_{0} (G_{l, K}^{\alg}) 
\label{ContLeftTransOnConnecComp}\end{equation}
and Theorem \ref{equality of conn comp for Glalg and Glalg1} give the following continuous action by left translation:
\begin{equation}
G_K \times \, \pi_{0}(G_{l, K,1}^{\alg}) \rightarrow \pi_{0}(G_{l, K,1}^{\alg}).
\end{equation}
\end{remark}

%\begin{remark}\label{MT and H groups independent of K over F}
%By definition $K/F$ is a finite field extension
%such that $K \subset \overline{F}.$ Hence it is clear by definition that 
%$MT (V, \psi)$ and $H (V, \psi)$ are independent of $K.$ 
%That is, we do not need to include $K$ in the notation of 
%Mumford-Tate and Hodge groups.
%\end{remark}

%----\section{Algebraic Sato-Tate group and Sato-Tate group}----
\section{Algebraic Sato-Tate conjecture}
In this chapter we assume that the Hodge structure $(V, \psi),$ the ring $D$ and 
the family of $l$-adic representations $\rho_l\, :\, G_K \rightarrow \GIso (V_l, \psi_l)$ 
satisfy all the properties assumed in chapters 2--4.  We also assume hereafter that $n$ is odd; the case where $n$ is even requires some modifications to the definitions, which we will discuss elsewhere.

One of the main objectives of this paper is the investigation of the following conjecture:

\begin{conjecture}\label{general algebraic Sato Tate conj.} 
(Algebraic Sato-Tate conjecture) \newline
${\rm{(a)}}$ For every finite extension $K/F$ and for every $l$, there exist a natural-in-$K$ 
reductive algebraic group $\AST_{K} (V, \psi) 
\subset \Iso_{(V, \psi)}$ over $\Q$ and a natural-in-$K$ monomorphism of group schemes:
\begin{equation}
\gpast_{l, K} \,\, : \,\,  G_{l, K, 1}^{\alg} \,\, 
{\stackrel{}{\hookrightarrow}} \,\,  \AST_{K} (V, \psi)_{\Q_l}.
\label{Algebraic Sato-Tate monomorphism}\end{equation}

\noindent
%The identity connected component $\AST_{K} (V, \psi)^{\circ}$ is reductive and 
${\rm{(b)}}$ The map
(\ref{Algebraic Sato-Tate monomorphism}) is an isomorphism: 
\begin{equation}
\gpast_{l, K} \,\, : \,\,  G_{l, K, 1}^{\alg} \,\, 
{\stackrel{\cong}{\longrightarrow}} \,\,  \AST_{K} (V, \psi)_{\Q_l}.
\label{Algebraic Sato-Tate equality generalized}\end{equation} 
\end{conjecture}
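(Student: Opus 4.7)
The plan is to build $\AST_K(V, \psi)$ via the motivic machinery developed in Chapters 9--11. Assuming $(V, \psi)$ together with the compatible system $\rho_l$ arises as the Hodge and $l$-adic realizations of a homogeneous motive $M$ in the category $\mathcal{M}_K$ of motives for absolute Hodge cycles, I would define $\AST_K(V, \psi)$ to be the motivic Serre group $G_{\mathcal{M}_K(M), 1}$, obtained as the kernel of the character $\chi$ on the motivic Galois group $G_{\mathcal{M}_K(M)} \subset \GIso_{(V, \psi)}$. Semisimplicity of $\mathcal{M}_K$ makes $\AST_K(V, \psi)$ reductive, and naturality in $K$ follows from the functoriality of motivic Galois groups under restriction of the base field.

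For the monomorphism in part (a), I would apply the $l$-adic realization fiber functor on the Tannakian subcategory $\mathcal{M}_K(M)$. By Tannakian formalism, every $l$-adic tensor invariant of $\rho_l$ is automatically a motivic invariant after realization, so the Zariski closure $G_{l, K}^{\alg}$ embeds as a closed subgroup of $G_{\mathcal{M}_K(M)} \times_{\Q} \Q_l$. Intersecting this inclusion with $\Iso_{(V_l, \psi_l)}$ and invoking the exact sequence (\ref{The exact sequence for GlK1alg and GlKalg}) together with its motivic counterpart---which share the common weight character $\chi$---produces the desired monomorphism $\gpast_{l, K} \colon G_{l, K, 1}^{\alg} \hookrightarrow \AST_K(V, \psi)_{\Q_l}$.

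Part (b) is genuinely deep and amounts to a motivic Mumford--Tate conjecture for $M$. Via Theorem~\ref{equality of conn comp for Glalg and Glalg1} and its anticipated motivic analogue (Theorem~\ref{equality of conn comp for GM and GM1}), surjectivity of $\gpast_{l, K}$ reduces to showing that $(G_{l, K}^{\alg})^{\circ}$ base-changed to $\Q_l$ agrees with $\MT(V, \psi)_{\Q_l}$, together with a matching of component groups coming from the Galois action on $D$. The main obstacle is therefore the Mumford--Tate conjecture itself, so unconditional results are to be expected only when the Mumford--Tate group admits an explicit description forcing agreement with the $l$-adic monodromy---e.g.\ when $\gpH(V, \psi) = C_D(\Iso_{(V, \psi)})$ and both sides coincide with the twisted decomposable Lefschetz group $\gpDL_K(V, \psi, D)$ of Chapter 3 (as announced in Corollary~\ref{cor alg Sato-Tate for MT explained by endo}). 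In the general case, I would only attempt to establish (a) unconditionally and formulate (b) as a conjectural refinement to be verified class by class, following the program of \cite{FKS}.
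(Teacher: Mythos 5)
This statement is labeled as a \emph{conjecture} in the paper, so the paper contains no proof of it; rather, Chapters~9--11 develop a conditional program for establishing it when $(V,\psi)$ and the $\rho_l$ arise from a homogeneous motive $M$. Your proposal recapitulates that program faithfully: defining $\AST_K(V,\psi)$ as the motivic Serre group $G_{\mathcal{M}_K(M),1}$ (Definition~\ref{GMKA1 as AST group}), deriving the monomorphism from the factoring of $\rho_l$ through $G_{\mathcal{M}_K(M)}(\Q_l)$ exactly as in (\ref{GlKalg subset GMKAQl}) and (\ref{GlK1alg subset GMKA1Ql}), and recognizing that part~(b) hinges on the Mumford--Tate conjecture plus matching of component groups, with unconditional cases arising when $\gpH(V,\psi)=C_D(\Iso_{(V,\psi)})$ (Corollary~\ref{cor alg Sato-Tate for MT explained by endo}). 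The one point worth flagging: the paper makes Definition~\ref{GMKA1 as AST group} under Serre's hypothesis $\MT(V,\psi)=\MMT_K(M)^\circ$ (automatic for AHC motives, Remark~\ref{A Serre conjecture about connected component of motivic Mumford-Tate}), since without it $\AST_K(M)^\circ$ could strictly contain $\gpH(V,\psi)$, which would render part~(b) hopeless because $(G_{l,K,1}^{\alg})^\circ$ is always bounded above by $\gpH(V,\psi)_{\Q_l}$; your claim to get~(a) \emph{unconditionally} is defensible as stated (the raw monomorphism needs no such hypothesis), but if you want the candidate $\AST_K$ to be the one for which~(b) is plausible, you should record that hypothesis explicitly, as the paper does in Theorem~\ref{AST as expected extension of HA}.
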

\noindent

\begin{remark} \label{reductive algebraic group}
We say that an algebraic group is \emph{reductive} if its identity connected component is reductive.
\end{remark}

\begin{remark}
The requirement that $\AST_K(V, \psi)$ and (\ref{Algebraic Sato-Tate monomorphism})
are natural in $K$ means that for any finite extension $L/K$ 
there is a natural monomorphism of group schemes: 
\begin{equation}
\AST_{L} (V, \psi) \hookrightarrow \AST_{K} (V, \psi)
\nonumber\end{equation} 
making the following diagram commute:

$${\xymatrix{
\quad G_{l, K, 1}^{\alg} \ar[r]^{\gpast_{l, K}} \quad 
& \quad  \AST_{K}(V, \psi)_{\Q_l}  \\ 
\quad G_{l, L, 1}^{\alg} \ar[r]^{\gpast_{l, L}} \ar@<0.1ex>[u]^{}  \quad 
& \quad   \AST_{L} (V, \psi)_{\Q_l} \ar@<0.1ex>[u]^{}  \\
}}
\label{initial diagram}$$
\label{naturality of ast l K}\end{remark}

\noindent
\begin{definition} The group $\AST_{K}(V, \psi)$ is called the \emph{algebraic Sato-Tate group}.
A maximal compact subgroup of $\AST_{K}(V, \psi)(\C)$ is called the \emph{Sato-Tate group} and is denoted 
$\ST_{K}(V, \psi).$ 
\label{The sato-Tate group}
\end{definition}

\noindent
\begin{remark} \label{abbreviation for ASTKVpsi}
We will make the following abbreviations: $\AST_{K} := \AST_{K} (V, \psi)$ and 
$\ST_{K} := \ST_{K} (V, \psi)$, whenever they do not lead to a notation conflict.
\end{remark}

\noindent
\begin{remark}\label{relation to the Tate conjecture}
When the Hodge structure $(V, \psi)$ comes from the cohomology of a smooth, projective variety
over $K$, then Conjecture \ref{general algebraic Sato Tate conj.} is closely related 
to the Tate conjecture. 
\end{remark}

Choose a suitable field embedding $\Q_l \rightarrow \C$ and
put  $ {G_{l, K, 1}^{\alg}}_{\C} := G_{l, K, 1}^{\alg} \otimes_{\Q_l} \C.$ 
Naturally we have $\pi_{0} (G_{l, K, 1}^{\alg}) \, \cong \, \pi_{0}({G_{l, K, 1}^{\alg}}_{\C}).$
By Theorem \ref{equality of conn comp for Glalg and Glalg1} and an 
argument similar to the proof of \cite[Lemma~2.8]{FKRS12}, we have the following.

\begin{proposition}\label{connected components iso}
Assume that the algebraic Sato-Tate conjecture (Conjecture~\ref{general algebraic Sato Tate conj.})
holds. Then there are natural isomorphisms
\begin{equation}
\pi_{0} (G_{l, K, 1}^{\alg}) \,\,\, \cong \,\,\, 
\pi_{0}(\AST_{K} (V, \psi)) \,\,\, \cong \,\,\, 
\pi_{0}^{}(\ST_{K}(V, \psi)).
\label{ConnCompIsom}\end{equation}
\end{proposition}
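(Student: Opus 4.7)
The plan is to split the proof into the two claimed isomorphisms and treat each in turn.

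First, for $\pi_{0}(G_{l,K,1}^{\alg}) \cong \pi_{0}(\AST_{K}(V,\psi))$, the key ingredient is part (b) of Conjecture~\ref{general algebraic Sato Tate conj.}, which supplies an isomorphism of $\Q_l$-group schemes $G_{l,K,1}^{\alg} \cong \AST_{K}(V,\psi)_{\Q_l}$. Because $\AST_{K}(V,\psi)$ is an algebraic group over a field of characteristic zero, it is automatically smooth; hence its identity component is geometrically connected, and the formation of $(-)^{\circ}$ commutes with the extension of scalars $\Q \hookrightarrow \Q_l$. The isomorphism $\gpast_{l,K}$ therefore descends to an isomorphism of finite \'etale group schemes
\begin{equation*}
\pi_{0}(G_{l,K,1}^{\alg}) \, \cong \, \pi_{0}(\AST_{K}(V,\psi))_{\Q_l},
\end{equation*}
and fixing the embedding $\overline{\Q_l} \hookrightarrow \C$ chosen just before the statement identifies the underlying finite groups of geometric components. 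Naturality in $K$ comes from the naturality required by Conjecture~\ref{general algebraic Sato Tate conj.}(a) (Remark~\ref{naturality of ast l K}), and compatibility with the Galois action on $\pi_{0}$ is provided by Theorem~\ref{equality of conn comp for Glalg and Glalg1}.

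For the second isomorphism $\pi_{0}(\AST_{K}(V,\psi)) \cong \pi_{0}(\ST_{K}(V,\psi))$, I would invoke the classical fact that the inclusion of a maximal compact subgroup into a real Lie group with finitely many components is a homotopy equivalence, provided the identity component is reductive. Here $\AST_{K}(V,\psi)(\C)^{\circ}$ is a connected reductive complex Lie group, so by the Cartan decomposition it is diffeomorphic to $\ST_{K}(V,\psi)^{\circ} \times \R^{d}$ for some $d\geq 0$; in particular $\ST_{K}(V,\psi)^{\circ} \hookrightarrow \AST_{K}(V,\psi)(\C)^{\circ}$ is a homotopy equivalence. Combined with surjectivity of the natural map $\ST_{K}(V,\psi) \twoheadrightarrow \pi_{0}(\AST_{K}(V,\psi)(\C))$ this yields the desired isomorphism at the level of $\pi_{0}$.

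The main technical point to verify is precisely this surjectivity. The argument, adapted from \cite[Lemma~2.8]{FKRS12}, is to choose a finite-order representative in $\AST_{K}(V,\psi)(\C)$ of each element of the finite group $\pi_{0}(\AST_{K}(V,\psi)(\C))$ and adjoin them to $\ST_{K}(V,\psi)^{\circ}$; the result is a compact subgroup surjecting onto $\pi_{0}$, which by maximality of $\ST_{K}(V,\psi)$ must be contained in $\ST_{K}(V,\psi)$. The remaining step, chasing the chain of identifications and checking that all arrows are natural in $K$, is routine bookkeeping.
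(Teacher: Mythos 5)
Your proof follows the same route as the paper's (which gives essentially only a sketch, citing [FKRS, Lemma 2.8]): the first isomorphism reduces to Conjecture~\ref{general algebraic Sato Tate conj.}(b) together with compatibility of $\pi_0$ with extension of scalars, and the second to the maximal-compact-subgroup theory of $\AST_K(V,\psi)(\C)$.

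One caveat about the final paragraph: the subgroup generated by $\ST_K(V,\psi)^{\circ}$ together with finite-order representatives of the components need not be compact, since those representatives do not automatically normalize $\ST_K(V,\psi)^{\circ}$, so the generated group may fail to be bounded. That construction therefore does not establish surjectivity as stated. However, the paragraph is superfluous: the Cartan--Iwasawa--Malcev theorem you invoke at the outset of your treatment of the second isomorphism holds for any Lie group with finitely many components (reductivity of the identity component is not needed) and already gives a diffeomorphism $\AST_K(V,\psi)(\C) \cong \ST_K(V,\psi) \times \R^{d}$ for the full group, not merely its identity component; hence it yields the isomorphism on $\pi_0$ outright, surjectivity included, with no separate verification required. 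Also, the passing appeal to Theorem~\ref{equality of conn comp for Glalg and Glalg1} is not actually needed for the isomorphisms themselves (that theorem compares $\pi_0(G_{l,K,1}^{\alg})$ with $\pi_0(G_{l,K}^{\alg})$, which does not appear in the statement), though it does feed into the surrounding discussion of Galois actions on component groups.
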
 

\begin{remark} Assume that the algebraic Sato-Tate conjecture (Conjecture \ref{general algebraic Sato Tate conj.}) holds. 
Then obviously the Sato-Tate group $\ST_{K}(V, \psi)$ is independent of $l.$ Take a prime $v$ in $\mathcal{O}_K$
and take a Frobenius element $\Fr_{v}$ in $G_K.$
Following \cite[\S 8.3.3]{Se12} (cf.  \cite[Def.\ 2.9]{FKRS12}) one can make the following construction. Let $s_v$ be the semisimple part in $\SL_V(\C)$ of the element
$$q_{v}^{-\frac{n}{2}} \rho_{l} (\Fr_{v}) \,\, \in \,\, 
 G_{l, K, 1}^{\alg} (\C) \,\, \cong \,\, \AST_{K} (V, \psi) (\C) \,\, \subset 
 \,\, \Iso_{(V, \psi)} (\C) \,\, \subset \,\, \SL_{V} (\C);$$ 
since the family $(\rho_{l})$ is 
strictly compatible, $s_v$ is independent of $l$. By \cite[Theorem 15.3 (c) p.\ 99]{Hu}, the semisimple part
of $q_{v}^{-\frac{n}{2}} \rho_{l} (\Fr_{v})$ considered in 
$\Iso_{(V, \psi)} (\C)$ and in $\AST_{K} (V, \psi) (\C)$ is again $s_v,$ and so is again 
independent of $l.$ Hence ${\rm{conj}}(s_v)$ in $\AST_{K} (V, \psi) (\C)$ is independent of
$l.$ Obviously ${\rm{conj}}(s_v) \subset \AST_{K} (V, \psi) (\C)$ is independent of the 
choice of a Frobenius element $\Fr_{v}$ over $v$ and contains the
semisimple parts of all the elements of ${\rm{conj}}(q_{v}^{-\frac{n}{2}} \rho_{l} (\Fr_{v}))$
in $\AST_{K} (V, \psi) (\C).$ Moreover, the elements in ${\rm{conj}} (s_v)$ have eigenvalues of 
complex absolute value $1$ by our assumptions, so there is some conjugate of $s_v$ contained in $\ST_{K}(V, \psi).$ This allows us to make sense of the following conjecture.
\label{Sato-tate set up}\end{remark}

\begin{conjecture}\label{general Sato Tate conj.} (Sato-Tate conjecture) 
The conjugacy classes ${\rm{conj}}(s_v)$ in $\ST_{K}(V, \psi)$ are equidistributed in 
${\rm{conj}} (\ST_{K}(V, \psi))$ with respect to the measure induced by the 
Haar measure of $\ST_{K}(V, \psi).$   
\end{conjecture}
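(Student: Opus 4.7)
The plan is to reduce the equidistribution statement to analytic properties of a family of $L$-functions, via the standard combination of Weyl's criterion and a Tauberian argument. Since $\ST_K(V, \psi)$ is compact, the space of continuous class functions is uniformly dense in the closure of the span of the irreducible characters, so equidistribution of $\{\mathrm{conj}(s_v)\}$ with respect to the pushforward of Haar measure is equivalent to
\[
\lim_{x \to \infty} \frac{1}{\pi_K(x)} \sum_{\substack{v : N v \le x \\ v \text{ good}}} \chi_\rho(s_v) \,=\, 0
\]
for every nontrivial irreducible complex representation $\rho$ of $\ST_K(V, \psi)$, where $\pi_K(x)$ is the usual prime-counting function for $K$. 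So the first task is to fix such a $\rho$ and study the corresponding sum.

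To do this, I would use the algebraic Sato-Tate conjecture (Conjecture~\ref{general algebraic Sato Tate conj.}), which is assumed in force throughout the context of Conjecture~\ref{general Sato Tate conj.}. Since $\ST_K(V, \psi)$ is a maximal compact subgroup of $\AST_K(V,\psi)(\C)$, every irreducible representation $\rho$ extends uniquely to an algebraic representation of $\AST_K(V,\psi)$, and via $\gpast_{l,K}^{-1}$ it pulls back to an algebraic representation $\widetilde \rho_l$ of $G_{l,K,1}^{\alg}$. Twisting by an appropriate power of the cyclotomic character $\chi_c^{n/2}$ (made precise through the splitting $s$ constructed in the proof of Proposition~\ref{L0realizing conn comp}), one extends $\widetilde\rho_l$ to an $l$-adic representation $\sigma_{\rho,l}$ of $G_K$ acting on some $W_l$, and the key identity is
\[
\mathrm{tr}\bigl(\sigma_{\rho,l}(\Fr_v)\bigr) \,=\, q_v^{(n/2)\dim\rho\,\ast}\,\chi_\rho(s_v)
\]
up to an explicit normalization (and independence of $l$ follows from strict compatibility). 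One then forms the Artin–Hecke $L$-function
\[
L(s,\rho) \,:=\, \prod_{v \text{ good}} \det\bigl(1 - \sigma_{\rho,l}(\Fr_v) N v^{-s}\bigr)^{-1},
\]
suitably completed at bad primes, and the sums above are precisely the coefficient sums of $-L'/L$.

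Now invoke a Wiener–Ikehara style Tauberian theorem: if each $L(s,\rho)$ admits a meromorphic continuation to $\mathrm{Re}(s) \ge 1$ which is holomorphic and nonvanishing on the line $\mathrm{Re}(s) = 1$ except for a pole of order equal to the multiplicity of the trivial representation in $\rho|_{\ST_K}$ at $s=1$ (i.e.\ a pole iff $\rho$ is trivial), then the character sums above tend to zero and Weyl's criterion is verified. This reduction is essentially formal given the algebraic Sato-Tate input and Proposition~\ref{connected components iso}, which guarantees that representations of $\ST_K$ and of $G_{l,K,1}^{\alg}$ are interchangeable.

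The main obstacle — in fact the entire content of the conjecture — is the analytic continuation and non-vanishing of $L(s,\rho)$ on $\mathrm{Re}(s) = 1$ for every irreducible $\rho$. This cannot be done by general soft arguments; in the classical cases where theorems have been proved (elliptic curves over totally real fields, Hilbert modular forms, certain abelian surfaces, and the weight-3 motives treated in \cite{FKS}), one decomposes $\rho$ into a direct sum of irreducibles of $\AST_K$, and for each one exhibits $\sigma_{\rho,l}$ as (potentially) automorphic, typically via Taylor–Wiles-type modularity lifting theorems and Harris–Shepherd-Barron–Taylor style potential automorphy in the residually reducible or higher-symmetric-power cases. In this generality the step is wide open and constitutes the true depth of the conjecture; the remainder of the argument above is merely the machine into which such an automorphy input would be fed.
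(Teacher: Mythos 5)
The statement you were asked about is not a theorem of the paper: it is Conjecture~\ref{general Sato Tate conj.}, the generalized Sato--Tate conjecture itself, and the paper offers no proof of it (nor claims one). There is therefore no argument in the paper to compare yours against. What you have written is the standard reduction, going back to Serre, of the equidistribution statement to analytic properties of $L$-functions: Peter--Weyl plus the Weyl criterion to reduce to character sums over nontrivial irreducible representations $\rho$ of $\ST_K(V,\psi)$; the unitarian trick to extend $\rho$ to an algebraic representation of the reductive group $\AST_K(V,\psi)$ and hence, under Conjecture~\ref{general algebraic Sato Tate conj.}, of $G_{l,K,1}^{\alg}$; a central twist to promote this to an $l$-adic representation of $G_K$; and a Tauberian theorem applied to the resulting $L$-functions. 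That framework is correct in outline (modulo small slips: the exponent in your trace identity should involve the weight of $\rho$ on the central $\G_m$, not $\dim\rho$, and the splitting $s$ from Proposition~\ref{L0realizing conn comp} exists only over $\C$ and over the field $L_0$, so extending $\widetilde\rho_l$ to all of $G_K$ needs more care on the component group). But as you yourself state in your last paragraph, the step that actually carries the content --- meromorphic continuation and nonvanishing of each $L(s,\rho)$ on $\mathrm{Re}(s)=1$, which in all known cases is supplied by (potential) automorphy --- is open in this generality. So your submission is an accurate description of the strategy one would use, together with an honest admission that it cannot currently be completed; it is not a proof, and no proof is expected here. The only substantive thing to flag is that you should not present a restatement of the conjecture's known reduction as a ``proof proposal'' for the conjecture itself.
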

\bigskip

\begin{remark}
If we are only interested in the isomorphism (\ref{ConnCompIsom}) for a fixed $l$, then it is enough to 
assume the existence of $\AST_{K} (V, \psi),$ as in Conjecture~\ref{general algebraic Sato Tate conj.}, and the existence of $\gpast_{l, K}$ which is an isomorphism for this particular $l.$   
\label{connected components iso for given l} 
\end{remark}

We now use the twisted Lefschetz group to obtain an upper bound on the algebraic Sato-Tate group.
\begin{remark}
%Observe that (\ref{DH and CD}) implies that
%\begin{equation}
%\gpH(V, \psi) \subseteq \gpDL^{\id}_{K}(V, \psi, D) \subseteq \gpDL_K(V, \psi, D).
%\label{DHA and DLKA}
%\end{equation}
%\medskip
%
%\noindent
We will assume in this and the next three chapters that the induced action of $D$ on $V_l$ is $G_K$-equivariant. 
In other words, $\forall \beta \in D,$ $\forall v_l \in V_l$ and $\forall \sigma \in G_K:$
\begin{equation}
\rho_{l} (\sigma) (\beta \, v_l) = \sigma(\beta) \, \rho_{l} (\sigma) (v_l).
\end{equation}
This immediately gives:
\begin{equation}
\rho_{l} (\sigma) \beta \rho_{l} (\sigma^{-1}) (v_l) = \sigma (\beta) (v_l),
\end{equation}
\begin{equation}
\rho_{l} (G_K)_1 \subseteq \gpDL_K(V, \psi, D) (\Q_l).
\end{equation}
\end{remark}

\noindent
We will observe, by (\ref{GlK1algtau subset DLKtauVpsiDQl}) and 
(\ref{decomposition of Glk1 into twists}) below, that:
\begin{equation}
G_{l, K, 1}^{\alg} \subseteq \gpDL_K(V, \psi, D)_{\Q_l},
\end{equation}
\medskip

We are interested in finding polarized Hodge structures $(V, \psi)$ and rings 
$D$ for which $G_{l, K, 1}^{\alg} = \gpDL_K(V, \psi, D)_{\Q_l}$ for each $l$.  
In such cases $\AST_{K}(V, \psi) = \gpDL_K(V, \psi, D).$
We explain in this chapter that the equality $G_{l, K, 1}^{\alg} = \gpDL_K(V, \psi, D)_{\Q_l}$
is equivalent to $G_{l, K_{e}, 1}^{\alg} = \gpDL_{K_{e}} (V, \psi, D)_{\Q_l}$.

\begin{definition} Put: 
$$(G_{l, K}^{\alg})^{\tau} := \{g \in G_{l, K}^{\alg}: \, g \beta g^{-1} = \rho_e(\tau) (\beta) \,\,\,\,
\forall \,\beta \in D\},$$
$$(G_{l, K, 1}^{\alg})^{\tau} := (G_{l, K}^{\alg})^{\tau} \cap G_{l, K, 1}^{\alg}.$$
\label{twisted form of GlK and GlK1}\end{definition}

\noindent
Observe that
\begin{equation}
(G_{l, K, 1}^{\alg})^{\tau}  \subseteq \gpDL_{K}^{\tau}(V, \psi, D)_{\Q_l}.
\label{GlK1algtau subset DLKtauVpsiDQl}\end{equation}
\medskip

\begin{remark}\label{definitions of twisted GKalg and GK1alg}
Let $\tilde\tau \in G_K$ be a lift of $\tau \in \Gal(K_{e}/K).$ The coset
$\tilde\tau \, G_{K_{e}}$ does not depend on the lift. The Zariski closure of  
$\rho_{l} (\tilde\tau \, G_{K_{e}}) = \rho_{l} (\tilde\tau) \, \rho_{l} (G_{K_{e}})$ 
in $\GIso_{(V_l, \psi_l)}$ is  $\rho_{l} (\tilde\tau) \, G_{l, K_{e}}^{\alg}.$  
Since $\rho_{l} (\tilde\tau) \, \rho_{l} (G_{K_{e}}) \subset (G_{l, K}^{\alg})^{\tau}$ then 
$\rho_{l} (\tilde\tau) \, G_{l, K_{e}}^{\alg} \subset (G_{l, K}^{\alg})^{\tau}.$ 
Because:
\begin{equation}
\rho_{l} (G_K)
 = \bigsqcup_{\tau \in \Gal(K_{e}/K)} \, \, 
 \rho_{l} (\tilde\tau) \, \rho_{l} (G_{K_{e}}),
\label{Im rhol GK decomposed cosets}
\end{equation}
then
\begin{equation}
G_{l, K}^{\alg}
 = \bigsqcup_{\tau \in \Gal(K_{e}/K)} \,\, \rho_{l} (\tilde\tau) \, G_{l, K_{e}}^{\alg}.
\label{decomposition of GlKalg into cosets of algebraic closures}
\end{equation} 

\noindent
This implies the following equalities:
\begin{equation}
G_{l, K}^{\alg}
 = \bigsqcup_{\tau \in \Gal(K_{e}/K)} \,\, (G_{l, K}^{\alg})^{\tau},
\label{decomposition of GlKalg into cosets of twisted algebraic closures}
\end{equation}  
\begin{equation}
G_{l, K, 1}^{\alg}
 = \bigsqcup_{\tau \in \Gal(K_{e}/K)} \,\, (G_{l, K, 1}^{\alg})^{\tau}.
\label{decomposition of Glk1 into twists}\end{equation} 

\noindent
Now we observe that
$\rho_{l} (\tilde\tau) \, G_{l, K_{e}}^{\alg} = (G_{l, K}^{\alg})^{\tau}$  for all $\tau.$ 
This implies the equality $(G_{l, K, 1}^{\alg})^{\id} = G_{l, K_e, 1}^{\alg}$ 
and the following natural isomorphism:

\begin{equation}
G_{l, K}^{\alg}/ (G_{l, K}^{\alg})^{\id} \cong \Gal(K_e/K).
\label{GlK1alg mod GlK1algid = GLeK}
\end{equation}

\noindent
Since $\gpDL_{K}^{\id}(V, \psi, D) = \gpDL_{K_{e}}(V, \psi, D) = \gpDL_{\overline{F}}(V, \psi, D)$, we get
\begin{equation}
G_{l, K_{e}, 1}^{\alg} \subseteq \gpDL_{K_{e}}(V, \psi, D)_{\Q_l}.
\label{GlLe1alg subset DLLeA}\end{equation} 

\noindent
Hence by \eqref{GlK1algtau subset DLKtauVpsiDQl},  \eqref{GlK1alg mod GlK1algid = GLeK} and Theorem \ref{equality of conn comp for Glalg and Glalg1}
there are natural isomorphisms:
\begin{equation}
G_{l, K, 1}^{\alg}/ (G_{l, K, 1}^{\alg})^{\id} \, \cong \,
\gpDL_{K}(V, \psi, D)/ \gpDL_{K}^{\id}(V, \psi, D) \, \cong \, \Gal(K_e/K).
\label{GK1alg mod GK1algID with resp. to DL mod DLId}\end{equation}
\end{remark}
\medskip

\begin{theorem}\label{connected components iso for DL}
The following equalities are equivalent: 
\begin{equation}
G_{l, K_{e}, 1}^{\alg} = \gpDL_{K_{e}}(V, \psi, D)_{\Q_l}.
\label{ItIsSatoTateGroupIdentification Ke}\end{equation}
\begin{equation}
G_{l, K, 1}^{\alg} = \gpDL_{K}(V, \psi, D)_{\Q_l}.
\label{ItIsSatoTateGroupIdentification}\end{equation}
Let $L/K$ be a finite extension such that $L \subset {\overline F}.$ 
The following equalities are equivalent: 
\begin{equation}
G_{l, L_e, 1}^{\alg} = \gpDL_{L_e}(V, \psi, D)_{\Q_l}.
\label{ItIsSatoTateGroupIdentification L}\end{equation}
\begin{equation}
G_{l, L, 1}^{\alg} = \gpDL_{L}(V, \psi, D)_{\Q_l}.
\label{ItIsSatoTateGroupIdentification Le}
\end{equation}
Moreover equalities \eqref{ItIsSatoTateGroupIdentification L} and
\eqref{ItIsSatoTateGroupIdentification Le} imply equalities 
\eqref{ItIsSatoTateGroupIdentification Ke} and
\eqref{ItIsSatoTateGroupIdentification}.
\end{theorem}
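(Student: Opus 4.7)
The plan is to exploit parallel coset decompositions on both sides of the claimed equalities, then finish with a short sandwich argument for the ``moreover'' clause.

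For the equivalence of \eqref{ItIsSatoTateGroupIdentification Ke} and \eqref{ItIsSatoTateGroupIdentification}, the key observation is that both $G_{l, K, 1}^{\alg}$ and $\gpDL_K(V, \psi, D)$ decompose as disjoint unions indexed by the same set $\Gal(K_e/K)$: \eqref{decomposition of Glk1 into twists} gives $G_{l, K, 1}^{\alg} = \bigsqcup_\tau (G_{l, K, 1}^{\alg})^\tau$, and \eqref{decomposition into twisted Lefschetz for fixed elements} gives the parallel decomposition of $\gpDL_K(V, \psi, D)$. Furthermore $(G_{l, K, 1}^{\alg})^{\id} = G_{l, K_e, 1}^{\alg}$ and $\gpDL_K^{\id}(V, \psi, D) = \gpDL_{K_e}(V, \psi, D)$, and \eqref{GK1alg mod GK1algID with resp. to DL mod DLId} ensures that on either side each twist piece is a single coset of the identity piece. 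Using \eqref{GlK1algtau subset DLKtauVpsiDQl}, any $g \in (G_{l, K, 1}^{\alg})^\tau$ also lies in $\gpDL_K^\tau(V, \psi, D)_{\Q_l}$ and so serves as a common coset representative; assuming \eqref{ItIsSatoTateGroupIdentification Ke}, multiplying the common identity coset by $g$ yields $(G_{l, K, 1}^{\alg})^\tau = \gpDL_K^\tau(V, \psi, D)_{\Q_l}$ for every $\tau$, giving \eqref{ItIsSatoTateGroupIdentification}. Conversely, assuming \eqref{ItIsSatoTateGroupIdentification}, intersecting both sides with the closed locus cut out by $g \beta g^{-1} = \beta$ for all $\beta \in D$ extracts the $\id$-twist piece on each side and returns \eqref{ItIsSatoTateGroupIdentification Ke}. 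The equivalence \eqref{ItIsSatoTateGroupIdentification L} $\Leftrightarrow$ \eqref{ItIsSatoTateGroupIdentification Le} is the preceding argument applied verbatim with $K$ replaced by $L$.

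For the ``moreover'' clause I will run a sandwich. Since $L \supseteq K$, we have $G_{L_e} = G_L \cap \Ker \rho_e \subseteq G_K \cap \Ker \rho_e = G_{K_e}$, hence $L_e \supseteq K_e$ and $G_{l, L_e, 1}^{\alg} \subseteq G_{l, K_e, 1}^{\alg}$. Applying \eqref{simple properties of decomposible Lefschetz 1} at both $K$ and $L$ gives $\gpDL_{L_e}(V, \psi, D) = \gpDL_{\overline F}(V, \psi, D) = \gpDL_{K_e}(V, \psi, D)$, and combining this with \eqref{GlLe1alg subset DLLeA} produces the chain
\[
G_{l, L_e, 1}^{\alg} \;\subseteq\; G_{l, K_e, 1}^{\alg} \;\subseteq\; \gpDL_{K_e}(V, \psi, D)_{\Q_l} \;=\; \gpDL_{L_e}(V, \psi, D)_{\Q_l}.
\]
Under the hypothesis \eqref{ItIsSatoTateGroupIdentification L} the two ends coincide, forcing equality throughout and in particular \eqref{ItIsSatoTateGroupIdentification Ke}; the first equivalence then promotes this to \eqref{ItIsSatoTateGroupIdentification}.

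No serious obstruction appears; the argument is essentially bookkeeping. The one place that demands care is verifying that the $\tau$-indexed decompositions on the two sides are indexed by the same $\tau$'s and that the $\id$-piece on each side is precisely the corresponding $K_e$ object, after which coset arithmetic and the sandwich do all of the work.
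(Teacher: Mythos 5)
Your proof is correct and follows essentially the same route as the paper: both reduce the first equivalence (and its $L$-analogue) to the index count in \eqref{GK1alg mod GK1algID with resp. to DL mod DLId} together with the containment \eqref{GlK1algtau subset DLKtauVpsiDQl}, and both establish the ``moreover'' clause by the sandwich $G_{l, L_e, 1}^{\alg} \subseteq G_{l, K_e, 1}^{\alg} \subseteq \gpDL_{K_e}(V, \psi, D)_{\Q_l} = \gpDL_{L_e}(V, \psi, D)_{\Q_l}$ coming from $K_e \subseteq L_e$ and \eqref{GlLe1alg subset DLLeA}. The only difference is that you spell out the coset bookkeeping behind ``follows from \eqref{GK1alg mod GK1algID with resp. to DL mod DLId}'' that the paper leaves implicit.
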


\begin{proof} The equivalence of \eqref{ItIsSatoTateGroupIdentification Ke}
and \eqref{ItIsSatoTateGroupIdentification} follows from 
\eqref{GK1alg mod GK1algID with resp. to DL mod DLId}.
Changing base to an extension $L/K$, the equivalence of 
\eqref{ItIsSatoTateGroupIdentification L} and 
\eqref{ItIsSatoTateGroupIdentification Le} also follows from
\eqref{GK1alg mod GK1algID with resp. to DL mod DLId}. 
Observe that $\Ker(\rho_e | G_L) \subset  \Ker  \rho_e.$
Hence $K_e \subset L_e.$ It follows that 
$\gpDL_{K_e}(V, \psi, D) = \gpDL_{L_e}(V, \psi, D)$ and 
$G_{l, L_e, 1}^{\alg} \subset G_{l, K_e, 1}^{\alg}.$
Hence \eqref{GlLe1alg subset DLLeA} and \eqref{ItIsSatoTateGroupIdentification L}
imply \eqref{ItIsSatoTateGroupIdentification Ke}.
\end{proof}

%----\section{Connected components of $AST_K$ and $ST_K$}----
\section{Connected components of $\AST_K$ and $\ST_K$}

\begin{remark}\label{minimal field of connectedness}
Consider the continuous homomorphism
\begin{equation}
\epsilon_{l, K}\, :\, G_K \rightarrow G_{l, K}^{\alg}(\Q_l). 
\label{epsilon}\end{equation}
Since $\rho_{l} (G_K)$ is Zariski dense in $G_{l, K}^{\alg}$, this map induces the continuous epimorphism:
\begin{equation}
{\tilde{\epsilon}}_{l, K}\, :\, G_K \rightarrow \pi_{0}(G_{l, K}^{\alg}).
\label{tilde epsilon}\end{equation}

\noindent
Since $(G_{l, K}^{\alg})^{\circ}$ is open in $G_{l, K}^{\alg}$, we get:
\begin{equation}
\epsilon_{l, K}^{-1} ((G_{l, K}^{\alg})^{\circ}(\Q_l)) \,\, = \,\,
\Ker {\tilde\epsilon}_{l, K} \,\, = \,\, G_{K_0}
\label{epsilon and tilde epsilon}\end{equation} 
for some finite Galois extension $K_{0}/K.$ From Proposition
\ref{L0realizing conn comp} and Theorem 
\ref{equality of conn comp for Glalg and Glalg1}
it follows that $K_{0}/K$ is the minimal extension such that 
$G_{l, K_{0}}^{\alg} = (G_{l, K}^{\alg})^{\circ}$ and $G_{l, K_{0}, 1}^{\alg} = 
(G_{l, K, 1}^{\alg})^{\circ}$. In principle, $K_0$ may depend
on $l$; in Proposition \ref{minimal field of connectedness independent of l} below, 
we will give conditions for the independence of $K_0$ from $l.$
These conditions are satisfied in the case of abelian varieties; see Remark~\ref{Serre homotheties Theorem}.

Let $\tilde\sigma \in G_K$ be a lift of $\sigma \in \Gal(K_{0}/K).$ The coset
$\tilde\sigma \, G_{K_{0}}$ does not depend on the lift. By the definition of $K_0$, there is an obvious 
isomorphism:
\begin{equation}
G_{l, K}^{\alg}/ (G_{l, K}^{\alg})^{\circ} \cong \Gal(K_0/K).
\label{GlK1alg mod GlK1alg0 = GK0K}
\end{equation}

\noindent
Also, the Zariski closure of  
$\rho_{l} (\tilde\sigma \, G_{K_{0}}) = \rho_{l} (\tilde\sigma) \, \rho_{l} (G_{K_{0}})$ 
in $\GIso_{(V_l, \psi_l)}$ is  $\rho_{l} (\tilde\tau) \, G_{l, K_{0}}^{\alg}.$ Because
\begin{equation}
\rho_{l} (G_K)
 = \bigsqcup_{\sigma \in \Gal(K_{0}/K)} \, \, 
 \rho_{l} (\tilde\sigma) \, \rho_{l} (G_{K_{0}}),
\label{Im rhol GK decomposed cosets for K_0}
\end{equation}
by the definition of $K_0$ we have:
\begin{equation}
G_{l, K}^{\alg}
 = \bigsqcup_{\sigma \in \Gal(K_{0}/K)} \,\, \rho_{l} (\tilde\sigma) \, G_{l, K_{0}}^{\alg}.
\label{decomposition of GlKalg into cosets of algebraic closures with resp. to K0}
\end{equation}
\end{remark}

\begin{remark} \label{field of connectednes for GlK1alg}
Let $H_{l, K, 1} := \rho_{l}^{-1} (\rho_{l} (G_K)_1)$ and $K_1 := \overline{K}^{H_{l, K, 1}}.$
Observe that:
\begin{gather*}
\epsilon_{l, K}^{-1} ((G_{l, K, 1}^{\alg})^{\circ}(\Q_l)) = \epsilon_{l, K}^{-1} (G_{l, K_0, 1}^{\alg}(\Q_l)) =
\epsilon_{l, K}^{-1} ((G_{l, K_0}^{\alg} \cap \Iso_{(V_l, \psi_l)})(\Q_l)) = 
\\
= \epsilon_{l, K}^{-1} (G_{l, K_0}^{\alg}(\Q_l)) \,\cap\, 
\epsilon_{l, K}^{-1}(\Iso_{(V_l, \psi_l)}(\Q_l))
= G_{K_0} \cap G_{K_1} = G_{K_{0}K_1}.
\end{gather*}
\end{remark}

\begin{remark}
We observe that $K \subset K_e \subset K_0.$
\end{remark}

\begin{proposition} \label{connected component of ASTK} 
Assume Conjecture
\ref{general algebraic Sato Tate conj.} ${(\rm{a})}$ and 
assume that $\gpast_{l, K}$ and $\gpast_{l, K_{0}}$ are isomorphisms for a fixed $l.$ Let
$L/K_{0}$ be a finite Galois extension. Then: 
\begin{itemize}
\item[(1)] $\AST_{K_{0}} = (\AST_{K})^{\circ}.$
\item[(2)] $\ST_{K_{0}} = (\ST_{K})^{\circ}$ up to conjugation
in $\AST_{K} (\C).$ 
\item[(3)] $\AST_{K_{0}} = \AST_{L}.$ 
\item[(4)] $\ST_{K_{0}} = \ST_{L}$ up to conjugation
in $\AST_{K_{0}} (\C).$ 
\end{itemize}
\end{proposition}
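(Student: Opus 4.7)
The plan is to transport assertions about the Zariski-closure groups $G_{l,\cdot,1}^{\alg}$ across the naturality diagram of Remark~\ref{naturality of ast l K}. Because $\gpast_{l,K}$ and $\gpast_{l,K_{0}}$ are isomorphisms and $\gpast$ is natural, the tower $G_{l,L,1}^{\alg} \subseteq G_{l,K_{0},1}^{\alg} \subseteq G_{l,K,1}^{\alg}$ is identified with the tower $(\AST_L)_{\Q_l} \hookrightarrow (\AST_{K_{0}})_{\Q_l} \hookrightarrow (\AST_K)_{\Q_l}$, and one reads off (1) and (3) from this picture. For (1), Remark~\ref{minimal field of connectedness} says $G_{l,K_{0},1}^{\alg}$ is precisely $(G_{l,K,1}^{\alg})^{\circ}$. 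Since $\gpast_{l,K}$ is an isomorphism of algebraic groups, it carries this connected component onto $((\AST_K)_{\Q_l})^{\circ} = ((\AST_K)^{\circ})_{\Q_l}$, while by naturality the same subgroup coincides with $(\AST_{K_{0}})_{\Q_l}$. Thus $(\AST_{K_{0}})_{\Q_l} = ((\AST_K)^{\circ})_{\Q_l}$ as closed $\Q_l$-subgroups of $(\AST_K)_{\Q_l}$, and faithful flatness of $\Q_l/\Q$ forces $\AST_{K_{0}} = (\AST_K)^{\circ}$ over $\Q$.

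For (3), from diagram~\eqref{diagram GlL1 maps to GlK1 and GlL maps to GlK} we have $G_{l,L,1}^{\alg} \subseteq G_{l,K_{0},1}^{\alg}$. Theorem~\ref{pre Serre theorem} applied to the finite Galois extension $L/K_{0}$ yields $(G_{l,L,1}^{\alg})^{\circ} = (G_{l,K_{0},1}^{\alg})^{\circ}$, and the latter equals $G_{l,K_{0},1}^{\alg}$ itself by the very characterization of $K_{0}$. Hence $G_{l,L,1}^{\alg} = G_{l,K_{0},1}^{\alg}$. The naturality factorization
\[
G_{l,L,1}^{\alg} \xrightarrow{\gpast_{l,L}} (\AST_L)_{\Q_l} \hookrightarrow (\AST_{K_{0}})_{\Q_l}
\]
then agrees with the isomorphism $\gpast_{l,K_{0}}$ under the identification $G_{l,L,1}^{\alg} = G_{l,K_{0},1}^{\alg}$, so the composite is an isomorphism. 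Both factors are monomorphisms, so each is an isomorphism, and the faithful flat descent argument of (1) gives $\AST_L = \AST_{K_{0}}$ as $\Q$-subgroups.

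For (2), Proposition~\ref{connected components iso} shows $\pi_{0}(\ST_K) \cong \pi_{0}(\AST_K)$, so $\ST_K$ meets every connected component of $\AST_K(\C)$; consequently $(\ST_K)^{\circ} = \ST_K \cap (\AST_K)^{\circ}(\C)$ is a maximal compact subgroup of $(\AST_K)^{\circ}(\C) = \AST_{K_{0}}(\C)$ by (1). Since $\ST_{K_{0}}$ is also a maximal compact subgroup of $\AST_{K_{0}}(\C)$ by definition, and all maximal compact subgroups of a real reductive Lie group are conjugate, $\ST_{K_{0}} = (\ST_K)^{\circ}$ up to conjugation in $\AST_{K_{0}}(\C) \subseteq \AST_K(\C)$. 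Part (4) is the same argument at level $K_{0}$: by (3) we have $\AST_L = \AST_{K_{0}}$, so $\ST_L$ and $\ST_{K_{0}}$ are both maximal compact in $\AST_{K_{0}}(\C)$ and hence conjugate there. The main technical point throughout is the faithful flat descent step, namely that a coincidence of closed $\Q_l$-subgroups of $(\AST_K)_{\Q_l}$ arising from $\Q$-subgroups of $\AST_K$ forces equality over $\Q$; everything else is a diagram chase from the naturality of $\gpast_{l,-}$ together with the defining property of $K_{0}$.
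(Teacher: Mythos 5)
Your proposal is correct and follows essentially the same route as the paper: identify $G_{l,K_{0},1}^{\alg}$ with $(G_{l,K,1}^{\alg})^{\circ}$, transport through the naturality square for $\gpast$ to get (1) and (3) (with the descent from $\Q_l$ to $\Q$ made explicit, which the paper leaves implicit), and then use maximality/conjugacy of compact subgroups for (2) and (4). The only cosmetic difference is that for (2) the paper argues by a two-sided containment between $\ST_{K_0}$ and $(\ST_K)^{\circ}$ using maximality twice, whereas you invoke the standard fact that the identity component of a maximal compact subgroup is maximal compact in the identity component; both are fine.
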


\begin{proof}
Consider the following commutative diagram. The bottom row is exact. The right
vertical arrow is an isomorphism by \eqref{ConnCompIsom} of Proposition \ref{connected components iso}
(cf. Remark \ref{connected components iso for given l}).

\begin{equation}
{\xymatrix{
1 \ar[r]^{} & \,\, \AST_{K_{0},\, \Q_l}   \ar[r]^{}  
& \quad \AST_{K, \, \Q_l}  \ar[r]^{}  \quad & \,\,
\pi_{0} (\AST_{K, \, \Q_l}) \ar[r]^{} & 1\\ 
1 \ar[r]^{} & \,\, G_{l, K_{0}, 1}^{\alg} \ar@<0.1ex>[u]^{{\gpast_{l, K_{0}}}}_{\cong}  \ar[r]^{}  & 
\quad  G_{l, K, 1}^{\alg} \ar@<0.1ex>[u]^{{\gpast_{l, K}}}_{\cong}  \ar[r]^{}  \quad & 
\,\, \pi_{0} (G_{l, K, 1}^{\alg}) \ar@<0.1ex>[u]_{\cong} \ar[r]^{} & 1}
\label{diagram GlL1 maps to ASTK}}
\end{equation}

\noindent
Since $\AST_{K_{0}, \, \Q_l}$ is connected (since it is isomorphic to
$G_{l, K_{0}, 1}^{\alg}$), the exactness of the top row in 
(\ref{diagram GlL1 maps to ASTK}) implies 
$\AST_{K_{0}, \, \Q_l} = (\AST_{K, \, \Q_l})^{\circ}$ and in particular 
$\AST_{K_{0}} = (\AST_{K})^{\circ}.$ By Proposition 
\ref{connected components iso} we obtain $\pi_{0}(\ST_{K}) = \pi_{0} (\AST_{K})$ and $\pi_{0}(\ST_{K_{0}}) = \pi_{0} (\AST_{K_{0}}) = 1.$
Hence by (1) we have $\ST_{K_{0}} \subset (\ST_{K})^{\circ}$ up to conjugation
in $\AST_{K} (\C)$ because $\ST_{K_{0}}$ is connected and compact and $\ST_{K}$ maximal compact in 
$\AST_{K} (\C).$ On the other hand $(\ST_{K})^{\circ} \subset (\AST_{K})^{\circ} (\C) = \AST_{K_{0}}(\C)$ by 
$\pi_{0}(\ST_{K}) = \pi_{0} (\AST_{K})$ and by (1).
Hence $(\ST_{K})^{\circ} \subset  \ST_{K_{0}}$ up to conjugation in $\AST_{K_{0}} (\C)$ because 
$\ST_{K_{0}}$ is maximal compact in $\AST_{K_{0}} (\C).$ Hence (2) follows. To prove (3) observe that
$G_{l, K_{0}, 1}^{\alg} = G_{l, L, 1}^{\alg}$ because $G_{l, L, 1}^{\alg}$ is a normal subgroup of finite index in
$G_{l, K_{0}, 1}^{\alg}$ and $G_{l, K_{0}, 1}^{\alg}$ is connected. Then (3) follows from the following commutative diagram:
$${\xymatrix{
\quad G_{l, K_{0}, 1}^{\alg} \ar[r]^{\gpast_{l, K_{0}}}_{\cong} \quad 
& \quad  \AST_{K_{0},\,  \Q_l}  \\ 
\quad G_{l, L, 1}^{\alg} \ar[r]^{\gpast_{l, L}} \ar@<0.1ex>[u]^{=}  \quad 
& \quad   \AST_{L, \, \Q_l} \ar@<0.1ex>[u]^{}  \\
}}
\label{initial diagram for L over K0}$$
and (3) implies (4) directly. 
\end{proof}

\begin{proposition} \label{minimal field of connectedness independent of l}
Assume that Conjecture \ref{general algebraic Sato Tate conj.} holds for $K$ and $K_0.$ Then the field $K_0$ is independent of $l.$ 
\end{proposition}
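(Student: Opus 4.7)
The plan is to show that for all but finitely many primes $v$ of $\mathcal{O}_K$, the condition ``$v$ splits completely in $K_0$'' is independent of $l$; by Bauer's theorem a finite Galois extension of $K$ is determined by its set of completely split primes up to finitely many exceptions, so this forces $K_0 = K_0^{(l)}$ to be $l$-independent. By Remark~\ref{minimal field of connectedness}, $K_0^{(l)}$ is the finite Galois extension of $K$ with $G_{K_0} = \epsilon_{l,K}^{-1}((G_{l,K}^{\alg})^\circ(\Q_l))$, so $v$ splits completely in $K_0^{(l)}$ if and only if $\rho_l(\Fr_v) \in (G_{l,K}^{\alg})^\circ(\Q_l)$ for any choice of Frobenius lift $\Fr_v$.

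Next I would reformulate this condition via the algebraic Sato--Tate group. The homothety $q_v^{-n/2}\mathrm{Id}$ lies in the image of $\G_m$ inside $G_{l,K}^{\alg}$ (by Bogomolov's theorem on homotheties, Remark~\ref{properties of Hodge-Tate representations }), which is connected and hence contained in $(G_{l,K}^{\alg})^\circ(\C)$. Therefore $\rho_l(\Fr_v) \in (G_{l,K}^{\alg})^\circ(\Q_l)$ is equivalent to $q_v^{-n/2}\rho_l(\Fr_v) \in (G_{l,K}^{\alg})^\circ(\C)$. By Remark~\ref{Sato-tate set up}, $q_v^{-n/2}\rho_l(\Fr_v)$ already lies in $G_{l,K,1}^{\alg}(\C)$, and the proof of Proposition~\ref{L0realizing conn comp} shows $(G_{l,K,1}^{\alg})^\circ = G_{l,K,1}^{\alg} \cap (G_{l,K}^{\alg})^\circ$. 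Under the ASTC isomorphism $\gpast_{l,K}$ at $K$, the identity component $(G_{l,K,1}^{\alg})^\circ$ is identified with $(\AST_K)^\circ \otimes \Q_l$, compatibly with the common tautological inclusion into $\Iso_{(V,\psi)}$. Hence the splitting condition rephrases as $q_v^{-n/2}\rho_l(\Fr_v) \in (\AST_K)^\circ(\C)$.

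The final step is to reduce to the semisimple part via the Jordan decomposition $q_v^{-n/2}\rho_l(\Fr_v) = s_v u_v$ inside $\AST_K(\C) \subset \SL_V(\C)$ (both factors lie in $\AST_K(\C)$ because algebraic subgroups of $\SL_V(\C)$ are stable under Jordan decomposition). The unipotent $u_v$ automatically sits in the identity component, so $q_v^{-n/2}\rho_l(\Fr_v) \in (\AST_K)^\circ(\C)$ if and only if $s_v \in (\AST_K)^\circ(\C)$; and since $(\AST_K)^\circ$ is normal in $\AST_K$, this membership depends only on the $\AST_K(\C)$-conjugacy class of $s_v$. Remark~\ref{Sato-tate set up} asserts that this conjugacy class is independent of $l$, and combining everything shows that the splitting condition for $v$ in $K_0^{(l)}$ is $l$-independent (for $v$ unramified for both $\rho_l$ and $\rho_{l'}$), whence $K_0^{(l)} = K_0^{(l')}$.

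The main obstacle is the bookkeeping of identifications in the middle paragraph: one must know that $\gpast_{l,K}$ realizes $G_{l,K,1}^{\alg}$ and $\AST_K \otimes \Q_l$ as the same subgroup of $\Iso_{(V,\psi)} \otimes \Q_l$, so that membership in $(\AST_K)^\circ$ can be read directly from the matrix realization. This is precisely the compatibility already used in Remark~\ref{Sato-tate set up} to view $s_v$ simultaneously as an element of $\AST_K(\C)$ and of $\SL_V(\C)$, so it is implicit in the setup.
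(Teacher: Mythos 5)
Your proof is correct and takes a genuinely different route from the paper's. The paper argues purely with algebraic groups over $\Q$: invoking the conjecture also at $K_0$ (and $K_0'$), Proposition~\ref{connected component of ASTK} gives $\AST_{K_0} = (\AST_K)^\circ = \AST_{K_0'}$; since $\AST_{K_0}$ is therefore connected and $\gpast_{l',K_0}$ is an isomorphism, $G_{l',K_0,1}^{\alg}$ and hence $G_{l',K_0}^{\alg}$ are connected for \emph{every} $l'$, so $G_{K_0} \subseteq \epsilon_{l',K}^{-1}((G_{l',K}^{\alg})^{\circ}(\Q_{l'})) = G_{K_0'}$, i.e.\ $K_0' \subseteq K_0$, and symmetrically $K_0 \subseteq K_0'$. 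You instead work prime by prime: you translate ``$v$ splits completely in $K_0^{(l)}$'' into ``$s_v \in (\AST_K)^\circ(\C)$'' via Bogomolov, the equality $(G_{l,K,1}^{\alg})^\circ = G_{l,K,1}^{\alg} \cap (G_{l,K}^{\alg})^\circ$, Jordan decomposition, and normality of $(\AST_K)^\circ$; then the $l$-independence of $\mathrm{conj}(s_v)$ (Remark~\ref{Sato-tate set up}) plus Bauer's theorem finishes the argument. Your route is more number-theoretic, makes the link to strict compatibility of Frobenii explicit, and only invokes the conjecture at $K$ itself rather than at $K_0$ and $K_0'$; the paper's proof is shorter and avoids both Chebotarev/Bauer and Jordan decomposition but uses the conjecture at three fields. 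One caveat you already flag and handle appropriately: you read $\gpast_{l,K}$ as identifying $G_{l,K,1}^{\alg}$ with $\AST_K \otimes \Q_l$ \emph{as subgroups} of $\Iso_{(V,\psi)} \otimes \Q_l$, which is the reading implicit in Remark~\ref{Sato-tate set up} and literal in Definition~\ref{GMKA1 as AST group}, though Conjecture~\ref{general algebraic Sato Tate conj.} by itself only posits an abstract monomorphism.
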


\begin{proof}

\noindent 
Assume that the corresponding equality to \eqref{epsilon and tilde epsilon} holds for $l^{\prime}$ and
$K_{0}^{\prime}.$ Hence by Remark \ref{naturality of ast l K}, the assumptions and Proposition 
\ref{connected component of ASTK} we have 
$(\AST_{K})^{\circ} \cong \AST_{K_{0}} \cong \AST_{K_{0}^{\prime}}.$ 
Then from continuity of the maps ${\epsilon}_{l^{\prime}, K}$ and ${\tilde\epsilon}_{l^{\prime}, K}$ 
we find out that $K_{0}^{\prime} \subset K_0.$
By symmetry, from continuity of the maps ${\epsilon}_{l, K}$ and ${\tilde\epsilon}_{l, K}$ we obtain 
$K_0 \subset K_{0}^{\prime}.$ 
\end{proof}

\begin{remark}
Let $C \in \N$ be fixed. Then Proposition \ref{minimal field of connectedness independent of l} has the following version for all $l \geq C.$
\end{remark}

\begin{proposition} \label{minimal field of connectedness independent of l leq C}
Assume that for every $l \geq C$ the homomorphisms $\gpast_{l, K}$ and $\gpast_{l, K_{0}}$ are isomorphisms. Then the field $K_0$ is independent of $l \geq C.$ 
\end{proposition}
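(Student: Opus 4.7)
My plan is to adapt the proof of Proposition \ref{minimal field of connectedness independent of l} essentially verbatim, observing that the argument there relies only on the isomorphism property of $\gpast$ at the two individual primes being compared, rather than on any global form of Conjecture \ref{general algebraic Sato Tate conj.}. So it transfers immediately to the bounded-$l$ setting of the current proposition.

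In detail, I would fix two primes $l, l^{\prime} \geq C$ and denote by $K_{0}$ and $K_{0}^{\prime}$ the minimal Galois extensions of $K$ defined as in \eqref{epsilon and tilde epsilon} attached to $l$ and $l^{\prime}$, respectively. By hypothesis, the four maps $\gpast_{l, K}, \gpast_{l, K_{0}}, \gpast_{l^{\prime}, K}, \gpast_{l^{\prime}, K_{0}^{\prime}}$ are all isomorphisms. Applying Proposition \ref{connected component of ASTK} separately to the pair $(l, K_{0})$ and to the pair $(l^{\prime}, K_{0}^{\prime})$ yields
\[
\AST_{K_{0}} \,\,=\,\, (\AST_{K})^{\circ} \,\,=\,\, \AST_{K_{0}^{\prime}},
\]
and by Remark \ref{naturality of ast l K} these identifications are compatible with the natural inclusions into $\AST_K$.

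Next, I invoke the continuity argument used in the proof of Proposition \ref{minimal field of connectedness independent of l}. By continuity of $\epsilon_{l^{\prime}, K}$ and $\tilde\epsilon_{l^{\prime}, K}$, together with naturality, the composite $G_{K_{0}} \to G_{l^{\prime}, K, 1}^{\alg}(\Q_{l^{\prime}}) \cong \AST_{K}(\Q_{l^{\prime}})$ factors through $\AST_{K_{0}}(\Q_{l^{\prime}}) = (\AST_{K})^{\circ}(\Q_{l^{\prime}})$, which via $\gpast_{l^{\prime}, K}^{-1}$ corresponds to $(G_{l^{\prime}, K, 1}^{\alg})^{\circ}$. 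By the minimality defining $K_{0}^{\prime}$, this forces $K_{0}^{\prime} \subseteq K_{0}$. Swapping the roles of $l$ and $l^{\prime}$ gives $K_{0} \subseteq K_{0}^{\prime}$, whence $K_{0} = K_{0}^{\prime}$.

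The only potential obstacle --- which turns out not to be one --- is checking that this chain of implications uses the isomorphism hypothesis only at the two primes under consideration, rather than at all primes simultaneously. Since Proposition \ref{connected component of ASTK} is invoked one prime at a time and Remark \ref{naturality of ast l K} is itself a statement at a single prime, no hidden dependence on primes outside $\{l, l^{\prime}\}$ enters the argument. Thus the proof of Proposition \ref{minimal field of connectedness independent of l} transfers verbatim to the setting $l, l^{\prime} \geq C$.
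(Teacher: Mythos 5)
Your proposal is correct and follows exactly the route the paper intends: the paper gives no separate proof for this proposition, only the remark that it is ``the following version'' of Proposition \ref{minimal field of connectedness independent of l}, and you have correctly observed that the earlier proof uses the isomorphism property of $\gpast$ only at the two primes being compared (plus the naturality of Remark \ref{naturality of ast l K} and Proposition \ref{connected component of ASTK}, each applied one prime at a time), so it transfers verbatim to the bounded-$l$ setting. One small imprecision: $G_{K_0}$ maps under $\rho_{l'}$ into $G_{l', K_0}^{\alg}(\Q_{l'})$, not into $G_{l', K, 1}^{\alg}(\Q_{l'})$; the cleaner phrasing is that naturality of $\gpast$ together with $\AST_{K_0,\Q_{l'}} = (\AST_K)^{\circ}_{\Q_{l'}}$ and the isomorphism $\gpast_{l',K}$ force $G_{l', K_0, 1}^{\alg} \subseteq (G_{l', K, 1}^{\alg})^{\circ}$, hence $G_{l', K_0}^{\alg}$ is connected and the minimality of $K_0'$ yields $K_0' \subseteq K_0$ --- but this is the same step the paper's proof compresses into ``from continuity of $\epsilon_{l',K}$ and $\tilde\epsilon_{l',K}$.''
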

\medskip

The surjectivity of \eqref{diagram rho GLl1 maps to rho GKll and rho GLl maps to rho GK} is a 
subtle point in the computation of Sato-Tate groups. Below we find conditions for 
the surjectivity. Let $L/K$ be a finite Galois extension. Consider the following commutative diagram
where $\Zar_{L/K} := \Zar_{l, \, L/K}$ and $\Zar_{L/K, \, 1} := \Zar_{l, \, L/K, \, 1}.$

\begin{equation}
{\xymatrix{
\rho_{l} (G_K) / \rho_{l} (G_L)  \ar@{>>}[r]^{\Zar_{L/K}} \quad 
& \quad G_{l, K}^{\alg} / G_{l, L}^{\alg} \\ 
\rho_{l} (G_K)_1 / \rho_{l} (G_L)_1  \ar[r]^{\Zar_{L/K, \, 1}} \ar@<0.1ex>[u]^{j_{L/K}}    \quad 
& \quad  G_{l, K, 1}^{\alg} / G_{l, L, 1}^{\alg} \ar@<0.1ex>[u]_{\cong}^{i_{L/K}}  \\
}}
\label{diagram comparing the rho (GK) with GLalg}
\end{equation}
\medskip

\noindent
We put 
$$
\bar{l} \,\, = \,\,
\left\{
\begin{array}{lll}
l&\rm{if}&l > 2\\
8&\rm{if}&l=2.\\
\end{array}\right.
$$

\noindent
Let $K(\mu_{\bar{l}}^{\otimes n}) := {\bar K}^{{{\rm Ker}} \widetilde{\chi_{c}^{n}}},$
where $\widetilde{\chi_{c}^{n}} : G_K \rightarrow {\rm{Aut}} (\mu_{\bar{l}}^{\otimes n})$ is the $n$-th power of the cyclotomic character $\mod\bar{l}$.

\begin{lemma} Let $L/K$ be a finite Galois extension. Assume that:
\begin{itemize}
\item[(1)] $L \, \cap \, K(\mu_{\bar{l}}^{\otimes \, n}) \, = \, K$;
\item[(2)] $1 + l\Z_{l} \,\, {\rm{Id}}_{V_l} \, \subset \, \rho_{l} (G_K)$;
\item[(3)] $\Zar_{L/K}$ is an isomorphism.
\end{itemize}
Then the maps $j_{L/K}$ and $\Zar_{L/K, \, 1}$ are isomorphisms.
\label{JLK and Zar1 are isomorphisms}\end{lemma}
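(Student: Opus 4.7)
The plan is to reduce everything to showing that $j_{L/K}$ is surjective. Since $j_{L/K}$ is already injective by \eqref{diagram rho GLl1 maps to rho GKll and rho GLl maps to rho GK}, surjectivity will make it an isomorphism. The commutative square \eqref{diagram comparing the rho (GK) with GLalg}, together with the isomorphism $i_{L/K}$ from Theorem~\ref{pre Serre theorem} and hypothesis (3), then automatically forces $\Zar_{L/K,\,1}$ to be an isomorphism as well, since three of the four maps in the identity $i_{L/K} \circ \Zar_{L/K,\,1} = \Zar_{L/K} \circ j_{L/K}$ would be isomorphisms.

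The surjectivity of $j_{L/K}$ can be reduced to a statement about cyclotomic characters. Applying the snake lemma to the pair of exact sequences
\[
1 \longrightarrow \rho_l(G_?)_1 \longrightarrow \rho_l(G_?) \stackrel{\chi}{\longrightarrow} \chi(\rho_l(G_?)) \longrightarrow 1 \qquad (? = L, K),
\]
with injective vertical inclusions from $L$ to $K$, and using $\chi \circ \rho_l = \chi_c^{-n}$ from \eqref{comatibility of chi with chi-cycl.}, the cokernel of $j_{L/K}$ is identified with $\chi_c^n(G_K)/\chi_c^n(G_L)$. Thus surjectivity of $j_{L/K}$ is equivalent to the equality $\chi_c^n(G_L) = \chi_c^n(G_K)$ of closed subgroups of $\Z_l^\times$.

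To prove this equality, I would first upgrade hypothesis (2) from $K$ to $L$. By (2), $\Q_l^\times \mathrm{Id}_{V_l} \cap \rho_l(G_K)$ is open in $\Q_l^\times \mathrm{Id}_{V_l}$, so its Zariski closure $\G_m \mathrm{Id}_{V_l}$ lies in $G_{l, K}^{\alg}$. Being connected, $\G_m \mathrm{Id}_{V_l}$ is contained in $(G_{l, K}^{\alg})^\circ = (G_{l, L}^{\alg})^\circ \subset G_{l, L}^{\alg}$ by Theorem~\ref{pre Serre theorem}. Hypothesis (3) is equivalent to the equality $\rho_l(G_K) \cap G_{l, L}^{\alg} = \rho_l(G_L)$ (this is just the assertion that the surjection $\Zar_{L/K}$ has trivial kernel), so every scalar in $\rho_l(G_K)$---in particular every element of $1 + l\Z_l \, \mathrm{Id}_{V_l}$---already lies in $\rho_l(G_L)$. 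Using $\chi(\beta \, \mathrm{Id}_{V_l}) = \beta^2$ from \eqref{chi of homothety} and the identity $(1 + l\Z_l)^2 = 1 + \bar{l}\Z_l$ (squaring is bijective on $1 + l\Z_l$ for odd $l$; for $l = 2$, the squares in $\Z_2^\times$ are exactly $1 + 8\Z_2$), this yields $\chi_c^n(G_L) \supset 1 + \bar{l}\Z_l$, and the same inclusion holds for $G_K$.

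Both $\chi_c^n(G_L)$ and $\chi_c^n(G_K)$ therefore contain the kernel of the reduction map $\Z_l^\times \to (\Z/\bar{l})^\times$, so each is determined by its image modulo $\bar{l}$. Hypothesis (1) then closes the argument: by Galois theory,
\[
\widetilde{\chi_c^n}(G_L) \,\cong\, \Gal(L(\mu_{\bar{l}}^{\otimes n})/L) \,\cong\, \Gal(K(\mu_{\bar{l}}^{\otimes n})/(L \cap K(\mu_{\bar{l}}^{\otimes n}))),
\]
which equals $\Gal(K(\mu_{\bar{l}}^{\otimes n})/K) \cong \widetilde{\chi_c^n}(G_K)$ precisely because $L \cap K(\mu_{\bar{l}}^{\otimes n}) = K$. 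I expect the main obstacle to be the scalar upgrade in the previous step: hypothesis (3), rephrased as $\rho_l(G_K) \cap G_{l, L}^{\alg} = \rho_l(G_L)$, is essential, as it is what descends the scalar inclusion from $G_{l, L}^{\alg}$ (where it holds automatically from the identity-component argument) back to $\rho_l(G_L)$.
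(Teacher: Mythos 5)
Your proof is correct, and it reorganizes the argument in a way that differs from the paper's, even though the three hypotheses end up playing exactly the same roles. The paper attacks the other side of the square: it proves directly that $\Zar_{L/K,\,1}$ is surjective by constructing, for each $\sigma \in \Gal(L/K)$, a lift $\tilde\sigma$ fixing $K(\mu_{\bar l}^{\otimes n})$ (hypothesis (1)), so that $\chi(\rho_l(\tilde\sigma)) \in 1+\bar l\Z_l$ has a square root in $1+l\Z_l$; hypothesis (2) then produces a scalar $\tilde\gamma$ with $\rho_l(\tilde\gamma) = \sqrt{\chi(\rho_l(\tilde\sigma))}\,\mathrm{Id}_{V_l}$, and $\rho_l(\tilde\sigma\tilde\gamma^{-1})$ is an explicit element of $\rho_l(G_K)_1$ hitting the coset $\rho_l(\tilde\sigma)G_{l,L}^{\alg}$, since $\rho_l(\tilde\gamma) \in \G_m\,\mathrm{Id}_{V_l} \subset (G_{l,K}^{\alg})^{\circ} \subset G_{l,L}^{\alg}$. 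You instead target $j_{L/K}$, identify its cokernel with $\chi_c^n(G_K)/\chi_c^n(G_L)$, and kill that quotient by showing $\chi_c^n(G_L) \supset 1+\bar l\Z_l$ together with equality of the two images modulo $\bar l$. What your route buys is a cleaner separation of where each hypothesis enters, plus the stronger intermediate fact that $1+l\Z_l\,\mathrm{Id}_{V_l} \subset \rho_l(G_L)$ (i.e., hypothesis (2) descends to $L$), which you correctly extract from the reformulation of (3) as $\rho_l(G_K) \cap G_{l,L}^{\alg}(\Q_l) = \rho_l(G_L)$; the paper only needs the weaker statement $\rho_l(\tilde\gamma) \in G_{l,L}^{\alg}(\Q_l)$ and so never isolates this. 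What the paper's route buys is the explicit system of coset representatives $\rho_l(\tilde\sigma\tilde\gamma^{-1})$, which is reused immediately afterwards (Corollary \ref{JLK and Zar1 are isomorphisms and tildesigma indep. of l} and Theorem \ref{jK0K and Zar1 are isomorphisms}) to write $G_{l,K,1}^{\alg}$ as a disjoint union of translates of $G_{l,K_0,1}^{\alg}$; with your argument those representatives would still have to be produced separately.
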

\begin{proof}
By assumption (3), the upper horizontal arrow in \eqref{diagram comparing the rho (GK) with GLalg} is an isomorphism. The left vertical arrow 
(see \eqref{diagram rho GLl1 maps to rho GKll and rho GLl maps to rho GK})
is a monomorphism, and by Theorem \ref{pre Serre theorem} the right vertical arrow is an isomorphism. 
To show the theorem, it is 
enough to prove that the bottom horizontal arrow $\Zar_{L/K, \, 1}$ is an epimorphism. 
For each $\sigma \in \Gal(L/K)$  we can choose, by assumption (1), 
a lift $\tilde{\sigma} \in \Gal({\overline F}/K)$ such that 
$$\tilde{\sigma}\, |_{K(\mu_{\bar{l}}^{\otimes \, n})} = 
{\rm{Id}}_{K(\mu_{\bar{l}}^{\otimes \, n})}.$$ 
Recall the natural exact sequence:
\begin{equation}
1 \rightarrow \Iso_{(V_l, \psi_l)} \rightarrow \GIso_{(V_l, \psi_l)} 
\,\, {\stackrel{\chi}{\longrightarrow}} \,\, \G_m \rightarrow 1.
\label{Iso_l defining exact sequence}
\end{equation}
Since $\rho_{l} (G_K) \subset G_{l, K}^{\alg} (\Q_l) \subset \GIso_{(V_l, \psi_l)} (\Q_l)$, the choice of the lift
$\tilde{\sigma}$ and the equality (\ref{comatibility of chi with chi-cycl.}) give
$\chi (\rho_{l} (\tilde{\sigma})) \in 1 + \bar{l} \, \Z_l \subset \G_{m} (\Q_l).$
Hence $\sqrt{\chi (\rho_{l} (\tilde{\sigma}))} \in 1 + l\Z_{l}$ because $(1 + l\Z_{l})^2 = 1 + \bar{l} \,\Z_{l}.$
By assumption (2),
there exists $\tilde{\gamma} \in G_K$ such that 
$\rho_l (\tilde{\gamma}) = \sqrt{\chi (\rho_{l} (\tilde{\sigma}))} \,\, {\rm{Id}}_{V_l}.$
By Remark \ref{the image of homothety via chi}, we have $\chi(\alpha \,\, {\text{Id}}_{V_l}) = \alpha^{2}$ for any $\alpha \in \Q_{l}^{\times}.$ Hence:
\begin{equation}
\chi(\rho_{l} (\tilde{\sigma} \tilde{\gamma}^{-1})) =  \chi(\rho_{l} (\tilde{\sigma}))
\chi(\rho_{l} (\tilde{\gamma}))^{-1}  = 1.
\end{equation}

\noindent
It follows that $\rho_{l} (\tilde{\sigma} \tilde{\gamma}^{-1}) \in \rho_{l} (G_K)_1.$      
Since: 
\begin{equation}
\rho_{l} (G_K)  \, = \bigcup_{\sigma \in \Gal(L/K)} \, \rho_{l} (\tilde{\sigma}) \,\rho_{l} (G_L) \, = \,
{{\bigsqcup}^{\prime}}_{\sigma \in \Gal(L/K)} \,\,\,\, \rho_{l} (\tilde{\sigma}) \,\rho_{l} (G_L)
\label{rholGK as a sum of cosets of rholGL}
\end{equation}
then by assumption (3):
\begin{equation}
G_{l, K}^{\alg}  \, = \, \bigcup_{\sigma \in \Gal(L/K)} \, \rho_{l} (\tilde{\sigma}) \, G_{l, L}^{\alg}
\,\, = \,\,
{{\bigsqcup}^{\prime}}_{\sigma \in \Gal(L/K)} \,\,\,\, \rho_{l} (\tilde{\sigma}) \, G_{l, L}^{\alg}.
\label{GKalg as a sum of cosets of GLalg}
\end{equation}
where ${{\bigsqcup}^{\prime}}_{\sigma \in \Gal(L/K)}$ is the summation over some set of $\sigma \in \Gal(L/K)$ such that
$\rho_{l} (\tilde{\sigma}) \,\rho_{l} (G_L)$ are all different cosets of $\rho_{l} (G_L)$ in 
$\rho_{l} (G_K).$
Because of (\ref{GKalg as a sum of cosets of GLalg}) we have $(G_{l, K}^{\alg})^{\circ} \, \subset \, G_{l, L}^{\alg}.$
It is obvious that $\G_m \, {\text{Id}}_{V_l} \subset (G_{l, K}^{\alg})^{\circ}.$ Hence 
$\rho_{l} (\tilde{\gamma})) \in (G_{l, K}^{\alg})^{\circ} \, \subset \, G_{l, L}^{\alg}.$ 
Hence $\rho_{l} (\tilde{\sigma}) \, G_{l, L}^{\alg} =  
i_{L/K} (\rho_{l} (\tilde{\sigma} \tilde{\gamma}^{-1}) \, G_{l, L, 1}^{\alg})$ and it follows that
$\Zar_{L/K, \,1}$ is an epimorphism.
\end{proof}

\begin{corollary} Let $L/K$ be a finite Galois extension. Assume that:
\begin{itemize}
\item[(1)] $L \, \cap \, K(\mu_{\bar{l}}^{\otimes \, n}) \, = \, K$;
\item[(2)] $1 + l\Z_{l} \,\, {\rm{Id}}_{V_l} \, \subset \, \rho_{l} (G_K)$;
\item[(3)] $\Zar_{L/K}$ is an isomorphism;
\item[(4)] $G_K / G_L \cong \rho_{l} (G_K) / \rho_{l} (G_L)$.
\end{itemize}
Then each coset of $G_{K}/G_{L}$ has the form $\tilde{\sigma}_{1} \, G_{L}$ such that:
\begin{itemize}
\item[(1)]  $\rho_{l} (\tilde{\sigma}_{1}) \in \rho_{l} (G_K)_1$;
\item[(2)]  $\rho_{l} (G_K)_1 \,\, = \,\, 
{\bigsqcup}_{\tilde{\sigma}_1  G_{L}}  \,\, \rho_{l} (\tilde{\sigma}_{1})\, \rho_{l} (G_L)_1$;
\item[(3)]  $G_{l, K, 1}^{\alg} \,\, = \,\, 
{\bigsqcup}_{\tilde{\sigma}_1  G_{L}}  \,\, \rho_{l} (\tilde{\sigma}_{1})\, G_{l, L, 1}^{\alg} $.
\end{itemize} 
\label{JLK and Zar1 are isomorphisms and tildesigma indep. of l}
\end{corollary}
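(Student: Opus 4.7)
The plan is to derive the Corollary directly from Lemma~\ref{JLK and Zar1 are isomorphisms} together with hypothesis (4). Since hypotheses (1), (2), (3) coincide exactly with those of the Lemma, I would first invoke it to conclude that both $j_{L/K}$ and $\Zar_{L/K,\,1}$ are isomorphisms. Combined with (4), this gives a chain of natural isomorphisms
\begin{equation}
G_K/G_L \,\cong\, \rho_l(G_K)/\rho_l(G_L) \,\cong\, \rho_l(G_K)_1/\rho_l(G_L)_1 \,\cong\, G_{l,K,1}^{\alg}/G_{l,L,1}^{\alg},
\nonumber
\end{equation}
which will be the backbone of the argument.

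Next, for conclusion (1), I would fix an arbitrary coset $\tilde\sigma G_L$ of $G_K/G_L$ and use the surjectivity of $j_{L/K}$ to produce an element $h \in \rho_l(G_K)_1$ with $h\,\rho_l(G_L) = \rho_l(\tilde\sigma)\,\rho_l(G_L)$. Writing $h = \rho_l(\tilde\sigma_1)$ for some $\tilde\sigma_1 \in G_K$, hypothesis (4) then translates the equality of cosets of $\rho_l(G_L)$ in $\rho_l(G_K)$ into the equality $\tilde\sigma_1 G_L = \tilde\sigma G_L$, producing a representative of the given coset whose image under $\rho_l$ lies in $\rho_l(G_K)_1$.

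For conclusions (2) and (3), I would use that the isomorphisms $j_{L/K}$ and $\Zar_{L/K,\,1}$ respect coset decompositions. Once representatives $\tilde\sigma_1$ as in (1) are chosen, one for each coset of $G_K/G_L$, the fact that $j_{L/K}$ is bijective implies that the cosets $\rho_l(\tilde\sigma_1)\,\rho_l(G_L)_1$ partition $\rho_l(G_K)_1$ without repetition, giving (2). Analogously, the fact that $\Zar_{L/K,\,1}$ is bijective (equivalently, combining $j_{L/K}$ with the isomorphism $i_{L/K}$ of Theorem~\ref{pre Serre theorem}) yields that the cosets $\rho_l(\tilde\sigma_1)\,G_{l,L,1}^{\alg}$ partition $G_{l,K,1}^{\alg}$, giving (3).

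The main subtlety is the appeal to hypothesis (4): without it, the representative $\tilde\sigma_1$ produced by the surjectivity of $j_{L/K}$ is only guaranteed to lie in the same coset of $\rho_l(G_L)$ in $\rho_l(G_K)$ as $\rho_l(\tilde\sigma)$, rather than in the same coset of $G_L$ in $G_K$ as $\tilde\sigma$ itself. Once (4) is in hand, everything else is a formal consequence of Lemma~\ref{JLK and Zar1 are isomorphisms} and the chain of isomorphisms displayed above.
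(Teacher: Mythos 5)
Your proposal is correct and follows essentially the same route as the paper: invoke Lemma~\ref{JLK and Zar1 are isomorphisms} to get that $j_{L/K}$ and $\Zar_{L/K,\,1}$ are isomorphisms, use surjectivity of $j_{L/K}$ to find a representative with image in $\rho_l(G_K)_1$, and use hypothesis (4) to transfer the coset identification back to $G_K/G_L$ before reading off (2) and (3) from the bijectivity of $j_{L/K}$ and $\Zar_{L/K,\,1}$. Your explicit remark on why hypothesis (4) is indispensable matches the one step the paper leaves implicit.
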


\begin{proof}
Pick elements $\tilde{\sigma} \in G_K$ which represent all of the cosets of $G_L$ in $G_K$.
Because of assumption (4), we have: 
\begin{equation}
\rho_{l} (G_K) = \bigsqcup_{\tilde{\sigma} \, G_L} \,\, \rho_l (\tilde{\sigma}) \, \rho_{l} (G_L).
\label{rhoGK = bigsqcup rhoGL1}
\end{equation}
By Lemma \ref{JLK and Zar1 are isomorphisms}, the map $j_{L/K}$ is an isomorphism. Hence for every 
$\tilde{\sigma}$ there is $\tilde{\sigma}_1 \in G_K$ such that $\rho_{l} (\tilde{\sigma}_{1}) \in \rho_{l} (G_K)_1$
and
$\rho_{l} (\tilde{\sigma}) \rho_{l} (G_L) =
\rho_{l} (\tilde{\sigma}_1) \rho_{l} (G_L).$ By assumption (4) we obtain 
$\tilde{\sigma} \, G_L = \tilde{\sigma}_1 \, G_L.$ Since $j_{L/K}$ is an isomorphism, the claim (2)
holds. The claim (3) follows because $\Zar_{L/K, \, 1}$ is an isomorphism by Lemma \ref{JLK and Zar1 are isomorphisms}.
\end{proof}
\medskip

\begin{theorem} Assume that:
\begin{itemize}
\item[(1)] $K_e \, \cap \, K(\mu_{\bar{l}}^{\otimes \, n}) \, = \, K$;
\item[(2)] $1 + l\Z_{l} \,\, {\rm{Id}}_{V_l}  \, \subset \, \rho_{l} (G_K)$.
\end{itemize}
Then all arrows in the following commutative diagram are isomorphisms:
\begin{equation}
{\xymatrix{
\rho_{l} (G_K) / \rho_{l} (G_{K_e})  \ar[r]^{\Zar_{K_e / K}}_{\cong} \quad 
& \quad G_{l, K}^{\alg} / G_{l, K_e}^{\alg} \\ 
\rho_{l} (G_K)_1 / \rho_{l} (G_{K_e})_1  \ar[r]^{\Zar_{K_e / K, \, 1}}_{\cong} \ar@<0.1ex>[u]^{j_{K_e/K}}_{\cong}    \quad 
& \quad  G_{l, K, 1}^{\alg} / G_{l, K_{e}, 1}^{\alg} \ar@<0.1ex>[u]_{\cong}^{i_{K_e/K}} \\}}
\label{diagram comparing the rho (GK) with GLalg for Ke}
\end{equation}
\label{jKeK and Zar1 are isomorphisms}
\end{theorem}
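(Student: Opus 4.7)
The plan is to derive this theorem as a direct application of Lemma \ref{JLK and Zar1 are isomorphisms} taken with $L = K_e$. Conditions (1) and (2) of that lemma coincide verbatim with the hypotheses (1) and (2) of the theorem, so the only substantive thing to verify is condition (3) of the lemma: that the top horizontal arrow $\Zar_{K_e/K}$ in the diagram is an isomorphism. Once this is in place, the lemma instantly supplies that $j_{K_e/K}$ and $\Zar_{K_e/K,\,1}$ are isomorphisms, and the right vertical arrow $i_{K_e/K}$ is already known to be an isomorphism by Theorem \ref{pre Serre theorem}.

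To verify that $\Zar_{K_e/K}$ is an isomorphism, I will exploit the $G_K$-equivariance of the $D$-action on $V_l$ assumed at the start of Chapter 5. That equivariance gives
\begin{equation}
\rho_l(\sigma)\,\beta\,\rho_l(\sigma)^{-1} \,=\, \rho_e(\sigma)(\beta) \qquad (\forall\, \beta \in D,\ \sigma \in G_K),
\nonumber
\end{equation}
so for any $\sigma \in \Ker \rho_l$ we obtain $\rho_e(\sigma)(\beta) = \beta$ for every $\beta \in D$, i.e.\ $\sigma \in \Ker \rho_e = G_{K_e}$. Thus $\Ker \rho_l \subseteq G_{K_e}$, which lets us upgrade the tautological surjection $G_K/G_{K_e} \twoheadrightarrow \rho_l(G_K)/\rho_l(G_{K_e})$ to an isomorphism of finite groups of order $|\Gal(K_e/K)|$. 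By \eqref{GlK1alg mod GlK1algid = GLeK} together with the identity $(G_{l,K}^{\alg})^{\id} = G_{l,K_e}^{\alg}$ recorded in Remark \ref{definitions of twisted GKalg and GK1alg}, the right-hand side $G_{l,K}^{\alg}/G_{l,K_e}^{\alg}$ also has cardinality $|\Gal(K_e/K)|$. The map $\Zar_{K_e/K}$ is surjective by \eqref{GK mod GL surjects to GKalg mod GLalg} and its source and target are finite of the same cardinality, hence it is an isomorphism.

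With condition (3) confirmed, Lemma \ref{JLK and Zar1 are isomorphisms} applied to $L = K_e$ yields that the left vertical arrow $j_{K_e/K}$ and the bottom horizontal arrow $\Zar_{K_e/K,\,1}$ are isomorphisms, completing the proof. The only nontrivial ingredient is the inclusion $\Ker \rho_l \subseteq G_{K_e}$, which reduces the verification of condition (3) to a counting argument; after that the result is essentially formal, being a clean instance of the general lemma.
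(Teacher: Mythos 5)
Your proof is correct and follows essentially the same route as the paper: verify hypothesis (3) of Lemma \ref{JLK and Zar1 are isomorphisms} for $L = K_e$ and then invoke that lemma. The paper justifies the bijectivity of $\Zar_{K_e/K}$ by citing the disjoint coset decompositions \eqref{Im rhol GK decomposed cosets} and \eqref{decomposition of GlKalg into cosets of algebraic closures}, while you rederive the same fact from the inclusion $\Ker\rho_l \subseteq G_{K_e}$ (itself a consequence of the $G_K$-equivariance of the $D$-action) plus a counting argument; these are the same mechanism presented in slightly different packaging.
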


\begin{proof}
By \eqref{Im rhol GK decomposed cosets} and 
\eqref{decomposition of GlKalg into cosets of algebraic closures}, the 
upper horizontal arrow $\Zar_{K_e / K}$ in diagram \eqref{diagram comparing the rho (GK) with GLalg for Ke}
is an isomorphism. Now the assumptions (1) and (2) and Lemma \ref{JLK and Zar1 are isomorphisms} show that 
all of the arrows in \eqref{diagram comparing the rho (GK) with GLalg for Ke} are isomorphisms.
\end{proof}

\begin{theorem} Assume that:
\begin{itemize}
\item[(1)] $K_0 \, \cap \, K(\mu_{\bar{l}}^{\otimes \, n}) \, = \, K,$
\item[(2)] $1 + l\Z_{l} \,\, {\rm{Id}}_{V_l}  \, \subset \, \rho_{l} (G_K).$
\end{itemize}
Then all arrows in the following commutative diagram are isomorphisms:
\begin{equation}
{\xymatrix{
\rho_{l} (G_K) / \rho_{l} (G_{K_0})  \ar[r]^{\Zar_{K_0/K}}_{\cong} \quad 
& \quad G_{l, K}^{\alg} / G_{l, K_0}^{\alg} \\ 
\rho_{l} (G_K)_1 / \rho_{l} (G_{K_0})_1  \ar[r]^{\Zar_{K_0/K,\, 1}}_{\cong} \ar@<0.1ex>[u]^{j_{K_0/K}}_{\cong}    \quad 
& \quad  G_{l, K, 1}^{\alg} / G_{l, K_{0}, 1}^{\alg} \ar@<0.1ex>[u]_{\cong}^{i_{K_0/K}} \\}}
\label{diagram comparing the rho (GK) with GLalg for K0}
\end{equation}
Moreover each coset of $G_{K}/G_{K_0}$ has the form $\tilde{\sigma}_{1} \, G_{K_0}$ such that:
\begin{itemize}
\item[(1)]  $\rho_{l} (\tilde{\sigma}_{1}) \in \rho_{l} (G_K)_1$;
\item[(2)]  $\rho_{l} (G_K)_1 \,\, = \,\, 
{\bigsqcup}_{\tilde{\sigma}_1  G_{K_0}}  \,\, \rho_{l} (\tilde{\sigma}_{1})\, \rho_{l} (G_{K_0})_1$;
\item[(3)]  $G_{l, K, 1}^{\alg} \,\, = \,\, 
{\bigsqcup}_{\tilde{\sigma}_1  G_{K_0}}  \,\, \rho_{l} (\tilde{\sigma}_{1})\, G_{l, K_0, 1}^{\alg} $.
\end{itemize} 
\label{jK0K and Zar1 are isomorphisms}
\end{theorem}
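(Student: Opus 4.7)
The plan is to reduce this directly to Lemma~\ref{JLK and Zar1 are isomorphisms} and Corollary~\ref{JLK and Zar1 are isomorphisms and tildesigma indep. of l} applied with $L=K_0$. The only hypotheses in those statements that are not immediately among $(1)$ and $(2)$ are that $\Zar_{L/K}$ is an isomorphism and that $G_K/G_L\cong \rho_l(G_K)/\rho_l(G_L)$; both will be verified from the very definition of $K_0$ recalled in Remark~\ref{minimal field of connectedness}.

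First I would observe that, by construction of $K_0$ in \eqref{epsilon and tilde epsilon}, we have $G_{K_0}=\epsilon_{l,K}^{-1}((G_{l,K}^{\alg})^\circ(\Q_l))=\Ker\,\tilde\epsilon_{l,K}$, so in particular $\Ker\,\rho_l\subseteq G_{K_0}$. This immediately implies that the natural map
\begin{equation}
G_K/G_{K_0}\;\longrightarrow\;\rho_l(G_K)/\rho_l(G_{K_0})\nonumber
\end{equation}
is a bijection (surjectivity is tautological and injectivity follows from $\Ker\rho_l\subseteq G_{K_0}$), so condition~(4) in Corollary~\ref{JLK and Zar1 are isomorphisms and tildesigma indep. of l} holds for $L=K_0$. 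Composing this bijection with $\tilde\epsilon_{l,K}$ and using \eqref{GlK1alg mod GlK1alg0 = GK0K}, the quotient $\rho_l(G_K)/\rho_l(G_{K_0})$ is identified with $\pi_0(G_{l,K}^{\alg})=G_{l,K}^{\alg}/G_{l,K_0}^{\alg}$, and this identification coincides with $\Zar_{K_0/K}$; thus $\Zar_{K_0/K}$ is an isomorphism. Alternatively, this is exactly the content of \eqref{decomposition of GlKalg into cosets of algebraic closures with resp. to K0}.

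Next, with the hypotheses $(1)$ and $(2)$ of the theorem in hand, and with $\Zar_{K_0/K}$ now known to be an isomorphism, I apply Lemma~\ref{JLK and Zar1 are isomorphisms} to $L=K_0$; it produces the isomorphisms $j_{K_0/K}$ and $\Zar_{K_0/K,\,1}$ in diagram~\eqref{diagram comparing the rho (GK) with GLalg for K0}. The remaining arrow $i_{K_0/K}$ is an isomorphism by Theorem~\ref{pre Serre theorem}. Thus all four arrows of the diagram are isomorphisms. For the ``moreover'' part, all four hypotheses of Corollary~\ref{JLK and Zar1 are isomorphisms and tildesigma indep. of l} are now verified for $L=K_0$, so the three displayed assertions about coset representatives $\tilde\sigma_1$ and the decompositions of $\rho_l(G_K)_1$ and $G_{l,K,1}^{\alg}$ follow at once.

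The only potentially subtle step is the identification of $G_K/G_{K_0}$ with $\rho_l(G_K)/\rho_l(G_{K_0})$, since at first sight it looks like an independent hypothesis; but as explained above it is forced by the defining property $G_{K_0}=\Ker\,\tilde\epsilon_{l,K}$ and costs nothing. Apart from that, the proof is an unwinding of previously proven lemmas with $L$ specialized to $K_0$, so no genuinely new obstacle is expected.
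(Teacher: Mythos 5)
Your proposal is correct and follows essentially the same route as the paper: establish that $\Zar_{K_0/K}$ is an isomorphism from the defining property of $K_0$ (equivalently, from \eqref{Im rhol GK decomposed cosets for K_0} and \eqref{decomposition of GlKalg into cosets of algebraic closures with resp. to K0}), then invoke Lemma~\ref{JLK and Zar1 are isomorphisms} and Corollary~\ref{JLK and Zar1 are isomorphisms and tildesigma indep. of l} with $L=K_0$. Your explicit verification that $\Ker\rho_l\subseteq G_{K_0}$ forces $G_K/G_{K_0}\cong\rho_l(G_K)/\rho_l(G_{K_0})$ is a welcome unpacking of what the paper leaves implicit in Remark~\ref{minimal field of connectedness}, but it is not a different argument.
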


\begin{proof}
It follows from \eqref{Im rhol GK decomposed cosets for K_0} and 
\eqref{decomposition of GlKalg into cosets of algebraic closures with resp. to K0} that the 
upper horizontal arrow $\Zar_{K_0 / K}$ in diagram \eqref{diagram comparing the rho (GK) with GLalg for K0}
is an isomorphism.
Now the assumptions (1) and (2) and Lemma \ref{JLK and Zar1 are isomorphisms} show that 
all of the arrows in \eqref{diagram comparing the rho (GK) with GLalg for K0} are isomorphisms.
The isomorphism \eqref{Im rhol GK decomposed cosets for K_0} shows that the assumption (4) of Corollary 
\ref{JLK and Zar1 are isomorphisms and tildesigma indep. of l} is fulfilled, i.e., 
$G_K / G_{K_0} \cong \rho_{l} (G_K) / \rho_{l} (G_{K_0}).$ Hence the claims (1)--(3) 
follow by Corollary \ref{JLK and Zar1 are isomorphisms and tildesigma indep. of l}.  
\end{proof}

\begin{theorem} \label{STK iff STK0} Assume Conjecture \ref{general algebraic Sato Tate conj.} ${\rm{(a)}}$ and assume that for some $l:$
\begin{itemize}
\item[(1)] $K_0 \, \cap \, K(\mu_{\bar{l}}^{\otimes \, n}) \, = \, K$;
\item[(2)] $1 + l\Z_{l} \,\, {\rm{Id}}_{V_l}  \, \subset \, \rho_{l} (G_K)$;
\item[(3)] $\gpast_{l, K}$ is an isomorphism.
\end{itemize}
Then: 
\begin{equation}
\AST_{K, \, \Q_l}  \, = \, {\bigsqcup}_{\tilde{\sigma}_1 \, G_{K_0}}  \, 
\rho_{l} (\tilde{\sigma}_1) \, \AST_{K_{0}, \, \Q_l}
\label{decomposition of ASTKQl into cosets over Galois representatives}
\end{equation} 
\begin{equation}
\ST_{K}  \, = \, {\bigsqcup}_{\tilde{\sigma}_1 \, G_{K_0}}  \, 
\rho_{l} (\tilde{\sigma}_1) \, \ST_{K_{0}}
\label{decomposition of STK into cosets over Galois representatives}
\end{equation}
In particular the Sato-Tate conjecture (Conjecture \ref{general Sato Tate conj.}) on the equidistribution 
of normalized Frobenii in the representation $\rho_l$ with respect to $\ST_K$  holds if and only if 
the conjecture holds for the representation $\rho_{l} \, | \, G_{K_0}$ 
with respect to $\ST_{K_0}.$
\end{theorem}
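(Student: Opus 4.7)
The plan is to combine Theorem~\ref{jK0K and Zar1 are isomorphisms} with the algebraic Sato--Tate isomorphism $\gpast_{l,K}$ in order to decompose $\AST_{K,\Q_l}$ into cosets of $\AST_{K_{0},\Q_l}$, to descend to $\ST_K$ by exploiting the maximality of the compact subgroups involved, and finally to deduce the equidistribution equivalence by a standard Chebotarev plus Haar measure argument.

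First, hypotheses (1) and (2) are exactly the hypotheses of Theorem~\ref{jK0K and Zar1 are isomorphisms}, which produces coset representatives $\tilde{\sigma}_1 \in G_K$ of $G_{K_0}$ such that $\rho_l(\tilde{\sigma}_1) \in \rho_l(G_K)_1$ together with the decomposition
\[
G_{l, K, 1}^{\alg} \,\,=\,\, \bigsqcup_{\tilde{\sigma}_1 G_{K_0}} \rho_l(\tilde{\sigma}_1)\, G_{l, K_0, 1}^{\alg}.
\]
Hypothesis (3) is that $\gpast_{l,K}$ is an isomorphism; by naturality (Remark~\ref{naturality of ast l K}) and Proposition~\ref{connected component of ASTK} the same holds for $\gpast_{l,K_0}$. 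Applying $\gpast_{l,K}$ to the displayed decomposition, and identifying each $\rho_l(\tilde{\sigma}_1)G_{l,K_0,1}^{\alg}$ with its image $\rho_l(\tilde{\sigma}_1)\AST_{K_0,\Q_l}$ via $\gpast_{l,K_0}$, yields the first asserted decomposition \eqref{decomposition of ASTKQl into cosets over Galois representatives}.

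Next, by Proposition~\ref{connected component of ASTK}(1), $\AST_{K_0,\Q_l}=(\AST_{K,\Q_l})^{\circ}$, so after base changing to $\C$ via the chosen embedding $\Q_l\hookrightarrow\C$, the subsets $\rho_l(\tilde{\sigma}_1)\AST_{K_0}(\C)$ are exactly the connected components of $\AST_K(\C)$. Since $\ST_K$ is a maximal compact subgroup of $\AST_K(\C)$, it meets each connected component in a coset of $(\ST_K)^{\circ}$, and by Proposition~\ref{connected component of ASTK}(2) we have $(\ST_K)^{\circ}=\ST_{K_0}$ up to conjugation in $\AST_K(\C)$. The semisimple elements $\rho_l(\tilde{\sigma}_1)$ have eigenvalues of absolute value~$1$ (cf.\ Remark~\ref{Sato-tate set up}), so each admits a conjugate in $\ST_K$; after replacing the $\tilde{\sigma}_1$ with these conjugate representatives we obtain the decomposition \eqref{decomposition of STK into cosets over Galois representatives}.

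Finally, for the equidistribution equivalence, note that the primes $v$ of $\mathcal{O}_K$ partition according to which coset $\tilde{\sigma}_1 G_{K_0}$ contains $\Fr_v$, and by Chebotarev each coset receives density $1/[K_0:K]$. The normalized semisimple Frobenius $s_v$ associated to a $v$ with $\Fr_v\in \tilde{\sigma}_1 G_{K_0}$ lies in the component $\rho_l(\tilde{\sigma}_1)\ST_{K_0}$, and for each prime $w$ of $K_0$ above such a $v$ with $\Fr_w=\Fr_v^{f(w/v)}$ the associated element $s_w$ is obtained from $s_v$ by conjugation within $\ST_{K_0}$. The Haar measure on $\ST_K$ decomposes as $|\Gal(K_0/K)|^{-1}$ times the sum of the left-translates of Haar measure on $\ST_{K_0}$ by $\rho_l(\tilde{\sigma}_1)$; hence equidistribution on $\ST_K$ is equivalent to simultaneous equidistribution on each coset, which after left-translation by $\rho_l(\tilde{\sigma}_1)^{-1}$ is precisely equidistribution for the restricted representation $\rho_l\,|\,G_{K_0}$ with respect to $\ST_{K_0}$. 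The main obstacle will be the careful choice of representatives needed to realize the coset decomposition at the level of $\ST_K$ (not merely $\AST_K(\C)$), and the bookkeeping identifying $K_0$-primes lying above a given $K$-prime with a specified Frobenius coset, so that the Chebotarev density computation matches the Haar-measure decomposition of $\ST_K$ into cosets of $\ST_{K_0}$.
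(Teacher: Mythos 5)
Your proof follows essentially the same route as the paper's: Theorem~\ref{jK0K and Zar1 are isomorphisms} gives the coset decomposition of $G_{l,K,1}^{\alg}$, the isomorphism $\gpast_{l,K}$ transports it to $\AST_{K,\Q_l}$, and passing to $\C$-points and maximal compact subgroups yields the statement for $\ST_K$; the paper's own proof is in fact far terser and leaves the final equidistribution equivalence as an unproved ``in particular,'' so your sketch of the Chebotarev-plus-Haar-measure reduction is additional content rather than a divergence.

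Two of your justifications are off, though neither is fatal. First, there is no reason that $\rho_{l}(\tilde{\sigma}_1)$ is semisimple with eigenvalues of complex absolute value $1$: Remark~\ref{Sato-tate set up} concerns the \emph{normalized Frobenius} elements $q_v^{-n/2}\rho_l(\Fr_v)$, whose eigenvalues are controlled by the purity assumption, not the image of an arbitrary lift $\tilde{\sigma}_1$ under $\rho_l$ followed by an abstract embedding $\Q_l\hookrightarrow\C$. What you actually need --- one representative in $\ST_K$ of each connected component of $\AST_K(\C)$ --- follows directly from $\pi_0(\ST_K)\cong\pi_0(\AST_K)$ (Proposition~\ref{connected components iso}), i.e.\ from the fact that a maximal compact subgroup of a reductive complex group meets every component; this is all the paper uses, and the displayed equality for $\ST_K$ should be read with the representatives adjusted accordingly. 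Second, for $w\mid v$ with $f(w/v)>1$ one has $s_w$ conjugate to $s_v^{\,f(w/v)}$ (since $\Fr_w$ is conjugate to $\Fr_v^{\,f(w/v)}$ and $q_w=q_v^{\,f(w/v)}$), which is \emph{not} a conjugate of $s_v$; the standard repair is that such $w$ have norm at least $q_v^2$ and hence contribute zero density, so they may be discarded from the equidistribution statement over $K_0$. With these two corrections your argument goes through.
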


\begin{proof} By Theorem \ref{jK0K and Zar1 are isomorphisms} we get
\begin{equation}
G_{l, K, 1}^{\alg}  \, = \, {\bigsqcup}_{\tilde{\sigma}_1 \,  G_{K_0}}  \, 
\rho_{l} (\tilde{\sigma}_1) \, G_{l, K_{0}, 1}^{\alg} \,\, .
\label{decomposition of GlK1alg into cosets over Galois representatives}
\end{equation}
Hence by Proposition \ref{connected components iso} we get the equality 
\eqref{decomposition of ASTKQl into cosets over Galois representatives}
which, under base change to $\C,$ taking $\C$-points and restricting to maximal compacts, 
gives the equality \eqref{decomposition of STK into cosets over Galois representatives}.
\end{proof}

Let us now specialize the previous discussion to abelian varieties.

\begin{remark}\label{Serre homotheties Theorem}
Fix an embedding of $K$ into $\C$.
Let $(V, \psi)$ be the Hodge structure associated to an abelian variety $A$ over $K$
(i.e., $n=1$, $V := H_1(A_{\C}, \Q)$, and $\psi$ is the pairing induced by a polarization of $A$).
Take $D$ to be $\End(A_{\overline{F}})_{\Q}$ (noting that this coincides with $D_h = D(V,\psi)$).
Let $T_l(A)$ be the $l$-adic Tate module of $A$ and let 
$V_l := V_l(A) := T_{l} (A) \otimes_{\Z_l} \Q_l.$ Let $\rho_l$ be the Galois representation 
of $G_K$ on $V_l$.
In this case, all the assumptions made in chapters 2--5 are satisfied,
and the resulting definitions agree with the corresponding definitions made in \cite{BK}.

 J.-P. Serre proved \cite{Se81} that 
the index $e(l)$ of the group of homotheties in $\rho_{l} (G_K)$ in the group of all homotheties 
is bounded when $l$ varies. Hence there is $c \in \N$ such that 
$(\Z_{l}^{\times})^c \,\, {\rm{Id}}_{V_l}  \, \subset \, \rho_{l} (G_K)$ for all $l.$ Hence 
for every $l$ coprime to $c$, we obtain $ 1 + l\, \Z_{l} \,\, {\rm{Id}}_{V_l}  \, \subset \, \rho_{l} (G_K).$ 
In this way, Serre established independence of $K_0$ from $l$; an explicit description of $K_0$ in terms of fields of definition of torsion points was later given by Larsen--Pink \cite{LP}.
\end{remark}

\begin{corollary}
With notation as in Remark~\ref{Serre homotheties Theorem},
suppose that $A/F$ satisfies the Mumford-Tate conjecture,
$\gpH(V,\psi) = \gpL(V,\psi,D)$, and $\gpDL_{K_e} (V,\psi,D)$ is connected.
Then for $l \gg 0$, the Sato-Tate conjecture holds for $A/K$  with respect to $\rho_l,$ 
if and only if the conjecture  holds for 
$A/K_{0}$ with repect to $\rho_{l} \, | \, G_{K_0}.$
\label{STK iff STK0 as far as MT for A holds}
\end{corollary}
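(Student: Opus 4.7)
The plan is to verify, for all sufficiently large primes $l$, the three hypotheses of Theorem~\ref{STK iff STK0}, and then invoke that theorem directly. Throughout, the Hodge structure has weight $n = 1$, so $\bar{l} = l$ for $l > 2$ and $K(\mu_{\bar{l}}^{\otimes n}) = K(\mu_l)$.

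First, hypothesis~(3), namely that $\gpast_{l,K}$ is an isomorphism, is exactly the algebraic Sato--Tate conjecture for $A/K$. Under the assumptions that $A/F$ satisfies the Mumford--Tate conjecture, $\gpH(V,\psi) = \gpL(V,\psi,D)$, and $\gpDL_{K_e}(V,\psi,D)$ is connected, this is established in \cite{BK}. (Connectedness of $\gpDL_{K_e}(V,\psi,D)$ forces $\gpDL_{K_e}(V,\psi,D) = \gpL(V,\psi,D)$, so together with $\gpH = \gpL$ we obtain $\gpH(V,\psi) = C_D(\Iso_{(V,\psi)})$, which matches the hypothesis of the analogous motivic result to be proved in Chapter~11.) Hence $\gpast_{l,K}$ is an isomorphism for every $l$.

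Second, for hypothesis~(2), Serre's theorem on homotheties recalled in Remark~\ref{Serre homotheties Theorem} supplies a fixed $c \in \N$ such that $(\Z_{l}^{\times})^c \, {\rm{Id}}_{V_l} \subset \rho_{l}(G_K)$ for all $l$; squaring shows that $1 + l \Z_l \, {\rm{Id}}_{V_l} \subset \rho_l(G_K)$ for every $l$ coprime to $c$.

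Third, for hypothesis~(1), we use that $K_0$ is a fixed finite Galois extension of $K$, independent of $l$ (again by Remark~\ref{Serre homotheties Theorem}). The intersection $K_0 \cap K(\mu_l)$ is an abelian subextension of $K_0/K$ contained in the cyclotomic closure $K^{{\rm cyc}} := \bigcup_m K(\mu_m)$. Since $K_0/K$ is finite, $K_0 \cap K^{{\rm cyc}} \subset K(\mu_M)$ for some fixed integer $M$. For every prime $l$ coprime to $M$, linear disjointness of cyclotomic fields gives $K(\mu_M) \cap K(\mu_l) = K$, whence $K_0 \cap K(\mu_l) = K$. With all three hypotheses in hand for $l \gg 0$ (more precisely, $l > 2$, $l$ coprime to $c$, and $l$ coprime to $M$), Theorem~\ref{STK iff STK0} delivers the stated equivalence between the Sato--Tate conjecture for $A/K$ and for $A/K_0$. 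The main subtle point is the first step, which relies on the algebraic Sato--Tate conjecture for abelian varieties from \cite{BK}; the other two steps are routine arithmetic arguments.
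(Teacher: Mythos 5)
Your overall strategy --- verify hypotheses (1), (2), (3) of Theorem~\ref{STK iff STK0} for $l \gg 0$ and invoke that theorem --- is exactly the paper's approach. Hypotheses (2) and (3) are handled correctly: for (3), your observation that connectedness of $\gpDL_{K_e}(V,\psi,D)$ together with $\gpH = \gpL$ forces $\gpH(V,\psi) = \gpL(V,\psi,D) = \gpDL_{K_e}(V,\psi,D)$ is right, and combined with the Mumford-Tate conjecture this places us in the setting of \cite[Theorem~6.1]{BK} (which is what the paper cites; Theorem~\ref{conditions for AST} is the general Hodge-theoretic form of the same statement), so $\gpast_{l,K}$ is an isomorphism for all $l$; for (2), the conclusion is stated verbatim in Remark~\ref{Serre homotheties Theorem}, although the word ``squaring'' should be ``raising to the $c$-th power,'' which is surjective on the pro-$l$ group $1 + l\Z_l$ precisely when $l \nmid c$.

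There is, however, a genuine gap in your argument for hypothesis (1). You reduce to the claim $K(\mu_M) \cap K(\mu_l) = K$ for $l$ coprime to $M$ and attribute it to ``linear disjointness of cyclotomic fields.'' That is true over $\Q$ but false over a general number field $K$, because the base field can introduce new coincidences. For a counterexample, take $K = \Q(\sqrt{-15})$, $M = 3$, $l = 5$: since $\sqrt{5} = \sqrt{-15}/\sqrt{-3}$, one has $K(\mu_3) = \Q(\sqrt{-3}, \sqrt{5})$, while $K(\mu_5) \supset K(\sqrt{5}) = \Q(\sqrt{-3},\sqrt{5})$, so $K(\mu_3) \subset K(\mu_5)$ and $K(\mu_3)\cap K(\mu_5) = K(\mu_3) \neq K$ even though $\gcd(3,5)=1$. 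The intended conclusion $K_0 \cap K(\mu_l) = K$ for $l \gg 0$ is still true (the paper declares it obvious), but it requires a ramification argument, not na\"ive coprimality: for $l$ unramified in the Galois closure $\tilde{K}_0$ of $K_0$ over $\Q$, the field $\tilde{K}_0 \cap \Q(\mu_l)$ is unramified over $\Q$ and hence equals $\Q$ by Minkowski, so $\Gal(\tilde{K}_0(\mu_l)/\Q) \cong \Gal(\tilde{K}_0/\Q) \times (\Z/l)^\times$, and a short fixed-field computation in this product then gives $K_0 \cap K(\mu_l) = K$.
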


\begin{proof} Obviously for $l \gg 0$ the condition
(1) of Theorem \ref{STK iff STK0} holds. The condition
(2) of Theorem \ref{STK iff STK0} holds for $l \gg 0$ by the result of
Serre \cite{Se81} discussed in Remark \ref{Serre homotheties Theorem} or
by the result of Wintenberger \cite[Corollary 1, p.\ 5]{W} showing the Lang conjecture.  
The condition (3) of Theorem \ref{STK iff STK0} holds by \cite[Theorem 6.1]{BK}. 
\end{proof}

\begin{corollary} \label{base change on Sato-Tate based on dimension}
With notation as in Remark~\ref{Serre homotheties Theorem},
put $g := \dim \, A,$ and let $E$ be the center of $D$. 
Assume that either $g \leq 3$ or $A$ is absolutely simple of type I, II or III in 
the Albert classification with $\frac{g}{d e}$ odd, where $d^2 = [D:E]$ and $e := [E:\, \Q].$ 
Then for $l \gg 0$, the Sato-Tate conjecture holds for $A/K$  with respect to $\rho_l,$ 
if and only if the conjecture  holds for 
$A/K_{0}$ with repect to $\rho_{l} \, | \, G_{K_0}$.
\label{STK iff STK0 as far as MT for A holds, case I, II, III and dim A less than 3}  
\end{corollary}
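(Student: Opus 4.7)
The plan is to reduce to the preceding Corollary~\ref{STK iff STK0 as far as MT for A holds}, by verifying its three hypotheses — the Mumford-Tate conjecture for $A/F$, the identity $\gpH(V,\psi) = \gpL(V,\psi,D)$, and the connectedness of $\gpDL_{K_e}(V,\psi,D)$ — in each of the two listed cases.

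First I would dispose of the Mumford-Tate conjecture and the identity $\gpH = \gpL$ simultaneously, by invoking known results. For $g \leq 3$ both are classical, going back to the work of Tankeev, Ribet, and Moonen--Zarhin on Hodge classes on abelian varieties of small dimension; the absence of exceptional Hodge classes on powers of $A$ in this range is precisely the assertion $\gpH = \gpL$. For absolutely simple $A$ of Albert type I, II, or III with $g/(de)$ odd, both statements are established in the Banaszak--Gajda--Kraso{\'n} series of papers (see also \cite{BK}); indeed the ``odd'' hypothesis is tailored to exclude the exceptional Hodge/Mumford-Tate configurations occurring in these Albert types.

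Next I would verify that $\gpDL_{K_e}(V,\psi,D)$ is connected. By \eqref{simple properties of decomposible Lefschetz 1} this group is simply the centralizer $C_D(\Iso_{(V,\psi)})$, so the question is purely group-theoretic and can be read off from the Albert classification together with the form of the Rosati involution. In types I and II (and for most endomorphism algebras arising when $g \leq 3$) the centralizer decomposes, over an algebraic closure, into a product of symplectic groups and is therefore connected. In type III the centralizer is a priori of orthogonal type, and the $g/(de)$ odd hypothesis is needed to pin down the connected component.

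The main obstacle is this last step in the type III case, where the orthogonal-flavored centralizer $C_D(\Sp_{(V,\psi)})$ must be shown to be connected under the parity constraint $g/(de)$ odd; here one would carry out the Albert-type-by-type analysis in detail, using the polarization structure to cut down from $\gpO$ to $\mathrm{SO}$. Once all three hypotheses are confirmed, Corollary~\ref{STK iff STK0 as far as MT for A holds} applies directly in both cases and yields the desired equivalence of the Sato-Tate conjectures for $A/K$ and for $A/K_{0}$.
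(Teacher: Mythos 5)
Your proposal is correct and follows essentially the same route as the paper: the paper's proof likewise reduces to Corollary~\ref{STK iff STK0 as far as MT for A holds} by citing known results (specifically \cite[Theorem 7.12, Cor.\ 7.19]{BGK1}, \cite[Theorem 5.11, Cor.\ 5.19]{BGK2}, and \cite[Theorem 6.11]{BK}) that establish the Mumford-Tate conjecture, the equality $\gpH(V,\psi)=\gpL(V,\psi,D)$, and the connectedness of $\gpDL_{K_e}(V,\psi,D)$ for the abelian varieties in question. The only difference is that you sketch how one would verify these facts (notably the type III connectedness) rather than citing them outright, but the logical structure is identical.
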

\begin{proof}  By \cite[Theorem 7.12, Cor. 7.19]{BGK1}, \cite[Theorem 5.11, Cor. 5.19]{BGK2} and \cite[Theorem 6.11]{BK},
abelian varieties considered in this corollary satisfy the Mumford-Tate conjecture and the 
properties: $\gpH(A) = \gpL(A)$ and $\gpDL_{K_e} (A)$ connected. Hence the corollary follows by 
Corollary \ref{STK iff STK0 as far as MT for A holds}.
\end{proof}

\begin{remark}
Some additional cases for which the conclusion of Corollary~\ref{base change on Sato-Tate based on dimension} holds are provided by the Jacobians of (certain) hyperelliptic curves,
thanks to the work of Zarhin \cite{Z1, Z2}.
\end{remark}

\section{Mumford-Tate group and Mumford-Tate conjecture}

For $A$ an abelian variety over $K$ and $(V_A, \psi_A)$ the associated polarized Hodge structure (as in Remark~\ref{Serre homotheties Theorem}), 
there is the following result.

\begin{theorem}
\label{Theorem Deligne} 
(Deligne \cite[I, Prop.\ 6.2]{D1}, Piatetski-Shapiro \cite{P-S}, Borovoi \cite{Bor}; see also \cite[\S 4.1]{Se77})
For any prime number $l$,
\begin{equation}
(G_{l, K}^{\alg})^{\circ} \subseteq \MT(V_A, \psi_A)_{\Q_l}.
\label{Del ineq}\end{equation}
\end{theorem}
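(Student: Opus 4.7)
The strategy is the classical one: realize $\MT(V_A, \psi_A)$ as a stabilizer of Hodge tensors, then use the fact that on abelian varieties Hodge cycles are absolute Hodge to transfer these stabilization conditions to the $l$-adic side.

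First, I would recall the tensorial characterization of the Mumford-Tate group: $\MT(V_A, \psi_A)$ is the largest algebraic subgroup of $\GL_{V_A}$ which fixes every Hodge tensor, i.e., every element of
\[
\left(\bigoplus_{a,b,c} V_A^{\otimes a} \otimes (V_A^\vee)^{\otimes b}(c)\right)^{(0,0)} \cap \left(\bigoplus_{a,b,c} V_A^{\otimes a} \otimes (V_A^\vee)^{\otimes b}(c)\right)_{\Q}.
\]
(The polarization $\psi_A$ sits inside this as one such tensor, which is why $\MT \subset \GIso_{(V_A, \psi_A)}$.) It is enough to exhibit finitely many Hodge tensors $t_1, \dots, t_r$ whose stabilizer coincides with $\MT(V_A, \psi_A)$, which is possible since $\MT$ is of finite type.

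Next I would invoke Deligne's theorem that every Hodge cycle on an abelian variety is absolute Hodge (\cite[Theorem 2.11]{D1}). Applied to each $t_i$, this says that the image of $t_i$ under the comparison isomorphism
\[
\bigoplus_{a,b,c} V_A^{\otimes a} \otimes (V_A^\vee)^{\otimes b}(c) \otimes_\Q \Q_l \;\cong\; \bigoplus_{a,b,c} V_{l,A}^{\otimes a} \otimes (V_{l,A}^\vee)^{\otimes b}(c)
\]
lies in the fixed subspace of an open subgroup of $G_K$. Since there are only finitely many tensors $t_1, \dots, t_r$, one can choose a single finite extension $K'/K$ such that each $t_i \otimes 1$ is fixed by $\rho_l(G_{K'})$ (the Tate twists being absorbed by taking into account the cyclotomic character, which is already reflected in the definition of $\GIso$). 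Consequently $\rho_l(G_{K'}) \subset \MT(V_A, \psi_A)(\Q_l)$, and taking Zariski closures yields
\[
G_{l, K'}^{\alg} \;\subseteq\; \MT(V_A, \psi_A)_{\Q_l}.
\]

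Finally, I would apply Theorem~\ref{pre Serre theorem}, specifically the equality $(G_{l,L}^{\alg})^\circ = (G_{l,K}^{\alg})^\circ$ for any finite Galois $L/K$ (which holds in particular for the Galois closure of $K'/K$, since enlarging $K'$ only shrinks $\rho_l(G_{K'})$ and hence can only help the inclusion above). This gives
\[
(G_{l, K}^{\alg})^\circ \;=\; (G_{l, K'}^{\alg})^\circ \;\subseteq\; G_{l, K'}^{\alg} \;\subseteq\; \MT(V_A, \psi_A)_{\Q_l},
\]
which is the desired inclusion.

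The only deep input is Deligne's absolute-Hodge theorem for abelian varieties; everything else is formal manipulation of stabilizers and an application of the already-established connected-component comparison in Theorem~\ref{pre Serre theorem}. Thus the main obstacle is the one outsourced to \cite{D1}, and in the body of the present paper the proof reduces to the bookkeeping outlined above.
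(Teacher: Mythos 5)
The paper does not supply a proof of this statement at all: it is stated as a theorem of Deligne, Piatetski-Shapiro and Borovoi, with pointers to \cite{D1}, \cite{P-S}, \cite{Bor} and \cite{Se77}, and the authors simply cite it. So there is no in-paper argument to compare yours against. That said, your sketch is exactly the standard argument behind Deligne's version of the result (cf.\ \cite[I, Prop.~6.2]{D1}), and it is correct as outlined: the tensorial description of $\MT(V_A,\psi_A)$ as the fixer of finitely many Hodge tensors, Deligne's theorem that Hodge cycles on abelian varieties are absolute Hodge to transport those tensors to the $\ell$-adic side and obtain a finite extension $K'/K$ with $\rho_l(G_{K'})\subset \MT(V_A,\psi_A)(\Q_l)$, and then the connected-component invariance $(G_{l,L}^{\alg})^\circ=(G_{l,K}^{\alg})^\circ$ from Theorem~\ref{pre Serre theorem} (applied after replacing $K'$ by its Galois closure over $K$, as you note). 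The two small points worth being explicit about if this were written in full are (i) that the Hodge tensors should be taken in spaces of the form $V_A^{\otimes a}\otimes (V_A^\vee)^{\otimes b}(c)$, where the character $\chi$ on $\GIso_{(V_A,\psi_A)}$ acts on the twist and the cyclotomic character handles it on the Galois side — this is what makes the stabilizer land in $\GIso$ rather than $\SL$ — and (ii) that the Betti--\'etale comparison isomorphism is what identifies $V_A\otimes_\Q\Q_l$ with $V_{l,A}$ compatibly with tensor constructions, which you implicitly use. Neither is a gap; both are routine.
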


\noindent
The classical conjecture for $A/K$ states:
\begin{conjecture} (Mumford-Tate) \label{Mumford-Tate for A} 
For any prime number $l$,
\begin{equation}
(G_{l, K}^{\alg})^{\circ} = \MT(V_A, \psi_A)_{\Q_l}.
\label{MT eq for A}\end{equation}
\end{conjecture}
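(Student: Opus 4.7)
This is the Mumford--Tate conjecture, one of the most famous open problems in arithmetic geometry, so any ``proof proposal'' is necessarily a sketch of the strategy available in known cases rather than a genuine attack. Theorem~\ref{Theorem Deligne} already supplies $(G_{l,K}^{\alg})^\circ \subseteq \MT(V_A,\psi_A)_{\Q_l}$, so the task is the reverse inclusion.

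The first step is to pin down structural constraints on $(G_{l,K}^{\alg})^\circ$. Faltings' theorem guarantees that $\rho_l$ is semisimple (so $G_{l,K}^{\alg}$ is reductive, cf.\ Remark~\ref{if rhol semisimple then GlK1alg reductive}) and that the centralizer of $\rho_l(G_K)$ in $\End(V_l)$ is precisely $D_h \otimes_{\Q} \Q_l$. Combined with the Deligne bound, this places $(G_{l,K}^{\alg})^\circ$ as a connected reductive subgroup of $\MT(V_A,\psi_A)_{\Q_l}$ contained in the Lefschetz group $\gpL(V_A,\psi_A,D_h)_{\Q_l}$. Bogomolov's homothety theorem (Remark~\ref{properties of Hodge-Tate representations }) further forces the center of $(G_{l,K}^{\alg})^\circ$ to contain the scalar torus.

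Next, I would stratify by the Albert classification of $D_h$, identify both $\MT(V_A,\psi_A)$ and the Lefschetz group as explicit classical groups in each case, and attempt to show that any connected reductive subgroup of the Lefschetz group that contains the image of the Hodge homomorphism $h_{\infty,V}$ and the scalar torus must fill out $\MT(V_A,\psi_A)_{\Q_l}$. This is a Lie-algebra rigidity statement, typically carried out following Pink, Larsen--Pink, and Serre via the classification of minuscule representations together with the Bogomolov constraint on the center.

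The decisive obstacle is precisely this final rigidity step. It succeeds for $g \le 3$ and for Albert types I, II, III under the parity hypothesis of Corollary~\ref{base change on Sato-Tate based on dimension} (Banaszak--Gajda--Kraso\'n), as well as for certain hyperelliptic Jacobians via Zarhin's work, because in those situations the minuscule-weight analysis leaves no room for a proper reductive subalgebra. In general, however, exotic semisimple subalgebras of the Mumford--Tate Lie algebra cannot presently be excluded, and a genuinely new input would be needed to complete the proof in the remaining cases.
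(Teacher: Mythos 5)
You correctly observe at the outset that this statement is an open conjecture, not a theorem, and accordingly the paper supplies no proof to compare against: it is stated as Conjecture~\ref{Mumford-Tate for A} without argument, and the one direction that is known, $(G_{l,K}^{\alg})^\circ \subseteq \MT(V_A,\psi_A)_{\Q_l}$, is attributed to Theorem~\ref{Theorem Deligne} (Deligne, Piatetski-Shapiro, Borovoi). Your summary of the available strategy is accurate as a survey: Faltings gives semisimplicity and control of the commutant, Bogomolov gives the homothety torus, the Lefschetz group supplies an upper bound, and the remaining work is a Lie-algebraic rigidity argument that has been carried out only in special cases (low dimension, certain Albert types, and certain Jacobians as in Corollary~\ref{base change on Sato-Tate based on dimension} and the cited work of Banaszak--Gajda--Kraso\'n and Zarhin). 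The genuine gap, as you yourself flag, is exactly the final rigidity step: in general one cannot exclude proper connected reductive subgroups of $\MT(V_A,\psi_A)_{\Q_l}$ satisfying all the listed constraints, and no argument in the paper or elsewhere closes this gap unconditionally. So there is nothing to correct, but also nothing proved; this is the appropriate verdict for a statement the paper itself treats as conjectural.
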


\noindent
There is a general Mumford-Tate conjecture in the context of Hodge structures associated with 
$l$-adic representations \cite{UY}.

\begin{conjecture} (Mumford-Tate) \label{Mumford-Tate} 
For any prime number $l$,
\begin{equation}
(G_{l, K}^{\alg})^{\circ} = \MT(V, \psi)_{\Q_l}.
\label{MT eq}\end{equation}
\end{conjecture}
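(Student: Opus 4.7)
The plan is to attack the two inclusions in \eqref{MT eq} separately. For the inclusion $(G_{l,K}^{\alg})^{\circ} \subseteq \MT(V,\psi)_{\Q_l}$, I would extend the argument of Deligne--Piatetski-Shapiro--Borovoi quoted as Theorem~\ref{Theorem Deligne} from the abelian variety setting to the Hodge-theoretic setting of chapters 2--4. First I would reduce to $K = K_0$ using Proposition~\ref{L0realizing conn comp} so that $G_{l,K}^{\alg}$ is already connected. Then, using that $\{\rho_l\}$ is a strictly compatible family of Hodge-Tate representations attached to $(V,\psi)$, I would characterize $\MT(V,\psi)$ by its Tannakian description as the stabilizer in $\GIso_{(V,\psi)}$ of the Hodge classes in all tensor constructions $\bigoplus T^{a,b}(V)(c)$. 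The remaining step is to check that each such Hodge class is Galois-invariant under an open subgroup of $G_K$; in the AHC framework of chapter 9 this is automatic, since every Hodge class on an AHC motive is absolute Hodge and hence stable under an open subgroup of $G_K$, and in particular fixed by $\rho_l(G_{K_0})$.

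For the reverse inclusion $\MT(V,\psi)_{\Q_l} \subseteq (G_{l,K}^{\alg})^{\circ}$, this is where essentially all the difficulty lies: it asserts that every tensor in $\bigoplus T^{a,b}(V_l)(c)$ fixed by an open subgroup of $G_K$ comes, via the $l$-adic comparison isomorphism, from a Hodge class on $(V,\psi)$. This is a Tate-conjecture-type statement for the compatible family $\{\rho_l\}$. I would first pass to $K_0$ and work inside the connected reductive group $(G_{l,K_0}^{\alg})^{\circ}$, and then argue that every irreducible subrepresentation of any $T^{a,b}(V_l)(c)$ admitting a nonzero $G_{l,K_0}^{\alg}$-invariant vector also admits a nonzero $\MT(V,\psi)$-invariant vector on the Betti side. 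By Tannakian reconstruction inside the Tannakian subcategory generated by $(V,\psi)$, this would force the two groups to coincide after base change to $\Q_l$.

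The hard part is precisely this Tate-type step: one must produce Hodge classes from Galois-invariant $\Q_l$-classes in \emph{every} tensor construction on $V$. Even for abelian varieties the corresponding statement is highly nontrivial --- Faltings settled the case of endomorphisms, while the statement for arbitrary tensor constructions remains open in general. I would therefore not expect an unconditional proof in the generality stated here; what can plausibly be done inside the framework of the paper is to reduce Conjecture~\ref{Mumford-Tate} to a clean statement about Galois-invariant tensors, and then verify it in the classes already accessible: abelian varieties satisfying the hypotheses of Corollary~\ref{base change on Sato-Tate based on dimension}, motives in $\mathcal{M}^{{\rm{av}}}_{K}$ by combining Faltings with the results of chapter 9, and Jacobians of the hyperelliptic curves treated by Zarhin. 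Outside these classes the conjecture genuinely remains open, and it is the central obstruction to applying the rest of the machinery of the paper unconditionally.
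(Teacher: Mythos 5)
The statement you are addressing is labeled a \emph{conjecture} in the paper (Conjecture~\ref{Mumford-Tate}, following the formulation of Ullmo--Yafaev), and the paper offers no proof of it. Your proposal correctly recognizes this: you identify the reverse inclusion $\MT(V,\psi)_{\Q_l}\subseteq (G_{l,K}^{\alg})^{\circ}$ as a Tate-conjecture-type statement about producing Hodge classes from Galois-invariant $\Q_l$-tensors, and you conclude, rightly, that no unconditional proof can be expected in this generality. That diagnosis matches how the paper treats the statement: it is used as a hypothesis (see Theorems~\ref{conditions for AST}, \ref{conditions for MT via AST}, Corollary~\ref{cor alg Sato-Tate for MT explained by endo}), never as a result.

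One refinement to your sketch of the ``easy'' direction is worth flagging. In the general Hodge-theoretic setting of chapters 2--8, even the forward inclusion $(G_{l,K}^{\alg})^{\circ} \subseteq \MT(V,\psi)_{\Q_l}$ is \emph{not} automatic; the paper explicitly introduces it as an assumption in Remark~\ref{Mumford-Tate Conj and Deligne Theorem analogue} (the inclusion \eqref{Del ineq general}), on the model of Theorem~\ref{Theorem Deligne} for abelian varieties. It only becomes a theorem in the motivic setting of chapters 9--11, and there the route is not directly via stability of individual Hodge classes under an open subgroup of $G_K$ but via the factorization $G_{l,K}^{\alg} \subset (G_{\mathcal{M}_K(M)})_{\Q_l}$ of equation~\eqref{GlKalg subset GMKAQl} combined with Serre's conjecture $\MT(V,\psi)=\MMT_K(M)^{\circ}$, which holds for AHC motives (Remark~\ref{A Serre conjecture about connected component of motivic Mumford-Tate}). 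Your instinct to invoke the AHC property is therefore correct in spirit, but you should route it through the motivic Galois group rather than through ad hoc Galois-stability of Hodge classes, and you should be explicit that in the non-motivic generality of chapters 2--8 the forward inclusion itself is a hypothesis. With that caveat, your assessment of what is provable, what is assumed, and what remains open agrees with the paper's own treatment.
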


\begin{remark}\label{Mumford-Tate Conj and Deligne Theorem analogue} 
Assume that analogously to (\ref{Del ineq}) there is the following inclusion:
\begin{equation}
(G_{l, K}^{\alg})^{\circ} \subseteq \MT(V, \psi)_{\Q_l}.
\label{Del ineq general}\end{equation}
We see that (\ref{Del ineq general}) is equivalent to the inclusion
\begin{equation}
(G_{l, K, 1}^{\alg})^{\circ} \subseteq \gpH (V, \psi)_{\Q_l},
\label{Del ineq1}\end{equation}
while the Mumford-Tate conjecture is equivalent to the equality
\begin{equation}
(G_{l, K, 1}^{\alg})^{\circ} = \gpH(V, \psi)_{\Q_l}.
\label{H eq}\end{equation}
This follows immediately from the following commutative diagram
in which every column is exact and every horizontal arrow is a containment 
of corresponding group schemes. (Recall that $n$ is odd.)
$$
\xymatrix{
\quad 1 \ar@<0.1ex>[d]^{}  \quad 
& \quad 1 \ar@<0.1ex>[d]^{}  \quad 
&  1\ar@<0.1ex>[d]^{}\\
(G_{l, K, 1}^{\alg})^{\circ} \ar@<0.1ex>[d]^{} \ar[r]^{}  \quad 
& \quad \gpDH(V, \psi)_{\Q_l}   
\ar@<0.1ex>[d]^{} \ar[r]^{}  \quad & 
\quad \Iso_{V_{l}, \psi_{l}} \ar@<0.1ex>[d]^{} \\ 
(G_{l, K}^{\alg})^{\circ} \ar@<0.1ex>[d]^{} \ar[r]^{} \quad & 
\quad \MT (V, \psi)_{\Q_l}  \ar@<0.1ex>[d]^{} \ar[r]^{}  \quad & 
\quad  \GIso_{V_{l}, \psi_{l}} \ar@<0.1ex>[d]^{} \\
\quad \G_{m} \ar[r]^{=} \ar@<0.1ex>[d]^{}  \quad & 
\quad  \G_{m} \ar[r]^{=} \ar@<0.1ex>[d]^{}  \quad & 
\quad  \G_{m} \ar@<0.1ex>[d]^{} \\
\quad 1   \quad & \quad 1   \quad &  1}
\label{diagram in Deligne theorem}
$$
\end{remark}

The Mumford-Tate and Hodge groups do not behave well
in general with respect to products of Hodge structures, as can be seen in the case of abelian 
varieties \cite[p.\ 316]{G}. However, one has the following simple and well-known result
(see for instance \cite[(4.10)]{Mo2}); for more detailed discussions of products,
see any of \cite{Ha, MZ, Z3}. 

\begin{theorem}\label{Mumford-Tate group standard properties} 
The Mumford-Tate groups of Hodge structures have the following properties.
\begin{itemize} 
\item[1.] An isomorphism of rational, polarized Hodge structures  $\alpha : (V_1, \psi_1) 
\rightarrow (V_2, \psi_2) $  induces isomorphisms
$\MT (V_1, \psi_1) \cong \MT (V_2, \psi_2)$  and $\gpH (V_1, \psi_1) \cong \gpH (V_2, \psi_2).$ 
\item[2.] For $(V, \psi)$ is a rational, polarized Hodge structure, let
$(V, \psi)^s := \prod_{i=1}^s (V, \psi)$.
Then $\MT ((V, \psi)^s) \cong \MT ((V, \psi))$ and $\gpH ((V, \psi)^s) \cong \gpH ((V, \psi))$.
\end{itemize}
\end{theorem}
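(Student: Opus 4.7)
The plan is to exploit the universal (minimality) characterization of the Mumford--Tate and Hodge groups given immediately after Definition~\ref{Definition Mumford Tate}: $\MT(V,\psi)$ (resp.\ $\gpH(V,\psi)$) is the smallest $\Q$-subgroup of $\GIso_{(V,\psi)}$ (resp.\ of $\Iso_{(V,\psi)}$) whose $\C$-points contain the image of $\mu_{\infty,V}$ (resp.\ of $h_{\infty,V}|_{\gpU(1)}$).

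For part 1, I would observe that an isomorphism $\alpha:(V_1,\psi_1)\to(V_2,\psi_2)$ of polarized Hodge structures induces a $\Q$-group scheme isomorphism $\Phi_\alpha:\GL_{V_1}\to\GL_{V_2}$ by $g\mapsto \alpha g \alpha^{-1}$; since $\alpha$ is a (similitude) isometry with respect to the polarizations and respects the Hodge decompositions, $\Phi_\alpha$ restricts to an isomorphism $\GIso_{(V_1,\psi_1)}\to\GIso_{(V_2,\psi_2)}$ and satisfies $(\Phi_\alpha)_{\C}\circ\mu_{\infty,V_1}=\mu_{\infty,V_2}$. Then $\Phi_\alpha(\MT(V_1,\psi_1))$ is a $\Q$-algebraic subgroup of $\GIso_{(V_2,\psi_2)}$ whose $\C$-points contain $\mu_{\infty,V_2}(\C^{\times})$, so by minimality $\MT(V_2,\psi_2)\subseteq\Phi_\alpha(\MT(V_1,\psi_1))$; running the same argument with $\alpha^{-1}$ yields the reverse containment and hence the desired isomorphism. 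Replacing $\mu_{\infty,V_i}$ by $h_{\infty,V_i}|_{\gpU(1)}$ and $\GIso$ by $\Iso$ throughout gives the corresponding statement for $\gpH$.

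For part 2, I would use the diagonal morphism $\Delta:\GL_V\hookrightarrow\GL_{V^s}$, $g\mapsto \mathrm{diag}(g,\dots,g)$, which is a closed immersion of $\Q$-algebraic groups landing inside $\GIso_{(V^s,\psi^s)}$ because $\psi^s=\bigoplus_{i=1}^s\psi$. A direct computation on the componentwise Hodge decomposition of $V^s_{\C}=\bigoplus_{i=1}^s V_{\C}$ yields $\mu_{\infty,V^s}=\Delta_{\C}\circ\mu_{\infty,V}$. Hence $\Delta(\MT(V,\psi))$ is a $\Q$-subgroup of $\GIso_{(V^s,\psi^s)}$ whose $\C$-points contain $\mu_{\infty,V^s}(\C^{\times})$, giving $\MT((V,\psi)^s)\subseteq\Delta(\MT(V,\psi))$ by minimality; conversely, $\Delta^{-1}(\MT((V,\psi)^s))$ is a $\Q$-subgroup of $\GIso_{(V,\psi)}$ whose $\C$-points contain $\mu_{\infty,V}(\C^{\times})$, yielding $\MT(V,\psi)\subseteq\Delta^{-1}(\MT((V,\psi)^s))$ and hence equality. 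The same diagonal argument applied to $h_{\infty,V}|_{\gpU(1)}$ handles $\gpH$.

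The main (mild) obstacle is purely bookkeeping: one must verify that $\alpha$ and $\Delta$ are genuine morphisms of $\Q$-group schemes respecting the polarizations and the Hodge decompositions, and that images and preimages of $\Q$-algebraic subgroups under these closed immersions (respectively, isomorphisms) remain $\Q$-algebraic subgroups. Both points are immediate from the fact that everything in sight is defined over $\Q$, so there is no serious content beyond unwinding the minimality characterization.
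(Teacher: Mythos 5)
Your proof is correct. The paper itself gives no argument for this theorem: it declares the result ``simple and well-known'' and refers to Moonen's lecture notes \cite[(4.10)]{Mo2} (and to \cite{Ha, MZ, Z3} for more on products), so there is no in-text proof for you to have diverged from. What you have written is the standard argument one would find in such a reference, and it is sound: both parts follow from the universal (minimality) characterization of $\MT$ and $\gpH$ recorded just after Definition~\ref{Definition Mumford Tate}, applied to the conjugation isomorphism $\Phi_\alpha$ in part~1 and to the diagonal closed immersion $\Delta$ in part~2, together with the observations that $(\Phi_\alpha)_{\C}\circ\mu_{\infty,V_1}=\mu_{\infty,V_2}$ and $\Delta_{\C}\circ\mu_{\infty,V}=\mu_{\infty,V^s}$. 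The only small wording quibble is that an isomorphism of \emph{polarized} Hodge structures is an actual isometry ($\psi_2(\alpha v,\alpha w)=\psi_1(v,w)$), not merely a similitude; but your argument is insensitive to this distinction, since conjugation by $\alpha$ cancels any fixed scalar. It is also worth noting that the diagonal argument in part~2 parallels the one the paper does write out explicitly in Theorem~\ref{Zariski closure standard properties} on the Galois side, so your proof fits the template the authors use elsewhere.
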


One can make a corresponding calculation also on the Galois side.
\begin{theorem}\label{Zariski closure standard properties} 
We have the following results.
\begin{itemize} 
\item[1.] An isomorphism $\phi : (V_{1, l}, \psi_{1, l}) \rightarrow 
(V_{2, l}, \psi_{2, l})$ of $\Q_l [G_F]$-modules induces isomorphisms: 
$G_{l, K}^{\alg} (V_{1,l}, \psi_{1, l}) \cong 
G_{l, K}^{\alg} (V_{2, l}, \psi_{2, l})$ and 
$G_{l, K, 1}^{\alg} (V_{1, l}, \psi_{1, l}) \cong 
G_{l, K, 1}^{\alg} (V_{2, l}, \psi_{2, l}).$ 
\item[2.] If $(V_l, \psi_l)$ is a $\Q_l [G_F]$-module then
for any positive integer $s$,
$G_{l, K}^{\alg} (V_{l}^s, \psi_{l}^s) \cong G_{l, K}^{\alg}  (V_l, \psi_{l})$ and 
$G_{l, K, 1}^{\alg}  (V_{l}^s, \psi_{l}^s) = G_{l, K, 1}^{\alg}  (V_l, \psi_{l}).$
\end{itemize}
\end{theorem}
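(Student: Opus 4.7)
The plan is to deduce both parts from the functoriality of Zariski closure under isomorphisms (for part 1) and closed immersions (for part 2) of algebraic groups.

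For part 1, I first observe that the $\Q_l$-linear isomorphism $\phi$ induces a group scheme isomorphism $\mathrm{Ad}_\phi : \GL(V_{1,l}) \to \GL(V_{2,l})$, $g \mapsto \phi \, g \, \phi^{-1}$. Because $\phi$ respects the polarizations, $\mathrm{Ad}_\phi$ restricts to isomorphisms $\GIso_{(V_{1,l}, \psi_{1,l})} \cong \GIso_{(V_{2,l}, \psi_{2,l})}$ and $\Iso_{(V_{1,l}, \psi_{1,l})} \cong \Iso_{(V_{2,l}, \psi_{2,l})}$. The $G_F$-equivariance of $\phi$ translates into the identity $\rho_{l,2}(\sigma) = \phi \, \rho_{l,1}(\sigma) \, \phi^{-1} = \mathrm{Ad}_\phi(\rho_{l,1}(\sigma))$ for every $\sigma \in G_K$, hence $\rho_{l,2}(G_K) = \mathrm{Ad}_\phi(\rho_{l,1}(G_K))$. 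Passing to Zariski closures inside the respective $\GIso$ groups gives $G_{l, K}^{\alg}(V_{2,l}, \psi_{2,l}) = \mathrm{Ad}_\phi(G_{l, K}^{\alg}(V_{1,l}, \psi_{1,l}))$, and intersecting with $\Iso_{(V_{i,l}, \psi_{i,l})}$ yields the corresponding isomorphism for $G_{l, K, 1}^{\alg}$.

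For part 2, the key ingredient is the diagonal closed immersion of group schemes $\Delta : \GL(V_l) \hookrightarrow \GL(V_l^s)$, $g \mapsto \mathrm{diag}(g, \ldots, g)$. This map sends $\GIso_{(V_l, \psi_l)}$ into $\GIso_{(V_l^s, \psi_l^s)}$ and $\Iso_{(V_l, \psi_l)}$ into $\Iso_{(V_l^s, \psi_l^s)}$, and is an isomorphism onto its (closed) image. The Galois representation on the $s$-fold direct sum is $\rho_l^s = \Delta \circ \rho_l$, so $\rho_l^s(G_K) = \Delta(\rho_l(G_K))$. Since $\Delta$ is a closed immersion and hence commutes with Zariski closure, we obtain $G_{l, K}^{\alg}(V_l^s, \psi_l^s) = \Delta(G_{l, K}^{\alg}(V_l, \psi_l)) \cong G_{l, K}^{\alg}(V_l, \psi_l)$, and the analogous statement for $G_{l, K, 1}^{\alg}$ follows by intersecting with the appropriate isometry subgroup.

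There is no real obstacle here; both parts reduce to the standard fact that Zariski closure is functorial under morphisms of algebraic varieties that are homeomorphisms onto their images. The only point to verify carefully is the compatibility of $\phi$ (respectively $\Delta$) with the polarizations, but this is built directly into the definitions of $\GIso$ and $\Iso$ given in \eqref{def of GIso}--\eqref{def of Iso}.
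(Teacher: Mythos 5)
Your proof is correct and follows essentially the same route as the paper: part 1 via conjugation by $\phi$ (which the paper dismisses as obvious), and part 2 via the diagonal embedding $\Delta$, the identification $\rho_{l,V_l^s} = \Delta \circ \rho_{l,V_l}$, passage to Zariski closures, and intersection with the isometry group. No gaps.
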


\begin{proof}
1. Obvious.

\noindent
2. There is a natural isomorphism $\rho_{l, V_{l}^s} \cong \Delta \rho_{l, V_l}$ in which $\Delta \rho_{l, V_l} \, :\, G_K \rightarrow 
\GIso( (V_l)^s, \psi_{l}^s)$ is the natural diagonal representation
$\Delta \rho_{l, V_l} = {{\rm diag}} (\rho_{l, V_l}, \dots, \rho_{l, V_l}).$ 
Hence 
\begin{equation}
\rho_{l, V_{l}^s} (G_K) \cong {\Delta} \rho_{l, V_l} (G_K) \cong \rho_{l, V_l} (G_K),
\nonumber\end{equation}
This gives 
$$G_{l, K}^{\alg} (V_{l}^s, \psi_{l}^s)
 \cong  \Delta \, G_{l, K}^{\alg}  (V_l, \psi_{l}) \cong G_{l, K}^{\alg}  (V_l, \psi_{l}).$$
Moreover
$$G_{l, K, 1}^{\alg}  (V_{l}^s, \psi_{l}^s) = G_{l, K}^{\alg} (V_{l}^s, \psi_{l}^s) \cap
\Iso_{( (V_l)^s, \psi_{l}^s)} \cong 
\Delta \, G_{l, K}^{\alg} (V_{l}, \psi_{l}) \cap
\Iso_{( (V_l)^s, \psi_{l}^s)}$$
$$ \cong G_{l, K}^{\alg} (V_{l}, \psi_{l}) \cap
\Iso_{( (V_l), \psi_{l})} = G_{l, K, 1}^{\alg}  (V_l, \psi_{l}).$$
\end{proof}

\begin{corollary}\label{MT true for V implies true for Vs}
If the Mumford-Tate conjecture holds for $V$ then it
holds for $V^s$ for any positive integer $s.$
\end{corollary}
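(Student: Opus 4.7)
The plan is to deduce Corollary 7.7 as a direct consequence of Theorem 7.5 (part 2) and Theorem 7.6 (part 2), which show that both the Mumford-Tate group and the $l$-adic Galois algebraic group are preserved (up to canonical isomorphism) by passing from $V$ to $V^s$. The only content beyond invoking these results is verifying that the two isomorphisms are compatible, i.e.\ that they both come from the diagonal embedding $\Delta : \GIso_{(V,\psi)} \hookrightarrow \GIso_{(V^s, \psi^s)}$.

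More concretely, I would argue as follows. Both $\MT(V,\psi)_{\Q_l}$ and $(G_{l,K}^{\alg}(V,\psi))^\circ$ sit naturally inside $\GIso_{(V_l,\psi_l)}$, while $\MT(V^s,\psi^s)_{\Q_l}$ and $(G_{l,K}^{\alg}(V^s,\psi^s))^\circ$ sit inside $\GIso_{(V_l^s,\psi_l^s)}$. Theorem \ref{Mumford-Tate group standard properties}(2) shows that $\Delta$ identifies $\MT(V,\psi)$ with $\MT(V^s,\psi^s)$, and Theorem \ref{Zariski closure standard properties}(2) shows that the same $\Delta$ identifies $G_{l,K}^{\alg}(V,\psi)$ with $G_{l,K}^{\alg}(V^s,\psi^s)$; taking identity connected components (an operation preserved by the closed immersion $\Delta$) then identifies $(G_{l,K}^{\alg}(V,\psi))^\circ$ with $(G_{l,K}^{\alg}(V^s,\psi^s))^\circ$.

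Assuming the Mumford-Tate conjecture for $(V,\psi)$, we have $(G_{l,K}^{\alg}(V,\psi))^\circ = \MT(V,\psi)_{\Q_l}$ as subgroups of $\GIso_{(V_l,\psi_l)}$. Applying $\Delta$ to both sides and using the two compatibilities just described, we obtain $(G_{l,K}^{\alg}(V^s,\psi^s))^\circ = \MT(V^s,\psi^s)_{\Q_l}$ as subgroups of $\GIso_{(V_l^s,\psi_l^s)}$, which is the Mumford-Tate conjecture for $(V^s,\psi^s)$.

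There is no real obstacle here; the only thing to be careful about is that the isomorphism statements in Theorems \ref{Mumford-Tate group standard properties} and \ref{Zariski closure standard properties} are indeed realised by the \emph{same} diagonal embedding $\Delta$, so that the equality of the two subgroups transports cleanly from $V$ to $V^s$. This is immediate from the constructions in the proofs of those two theorems (in particular from $\rho_{l,V^s} = \Delta \circ \rho_{l,V}$ and from the fact that $\mu_{\infty,V^s}$ factors through $\Delta$), so the corollary follows.
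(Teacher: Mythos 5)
Your proof is correct and is essentially the paper's own argument: the paper simply cites Theorems \ref{Mumford-Tate group standard properties} and \ref{Zariski closure standard properties}, and you have just spelled out the (true and worth noting) point that both isomorphisms are realised by the same diagonal embedding, so the equality transports from $V$ to $V^s$.
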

\begin{proof} It follows from Theorems \ref{Mumford-Tate group standard properties}  
and \ref{Zariski closure standard properties}. 
\end{proof}

\begin{remark}\label{Mumford-Tate Conj AST conjecture} 
Observe that if the Mumford-Tate conjecture holds for $(V, \psi)$
and $K$ is such that $G_{l, K}^{\alg}$ is connected, then for any $s \geq 1:$
\begin{equation}
G_{l, K, 1}^{\alg}((V, \psi)^s) = \gpH((V, \psi)^s)_{\Q_l}.
\label{H eq1}\end{equation}

\noindent
Hence the algebraic Sato-Tate conjecture holds for $(V, \psi)^s$ for any $s \geq 1$ with
\begin{equation}
\AST_{K} ((V, \psi)^s) = \gpH ((V, \psi)^s).
\label{AST conj holds when MT conj and GL ALG connect }
\end{equation}
\end{remark}

%-------\section{Some cases of the algebraic Sato-Tate conjecture}---------

\section{Some conditions for the algebraic Sato-Tate conjecture}

Let $A$ be an abelian variety over $K$ and let
$D_A := {\rm End}_{\overline F}\,( A) \otimes_{\Z} \, \Q.$
For the polarized Hodge structure $(V_{A}, \psi_{A})$, the inclusion
$\gpH(V_{A}, \psi_A) \subseteq \gpL(V_A, \psi_A, D_A)$ can be strict, 
which makes the Mumford-Tate conjecture a subtle problem. Mumford
\cite{M} exhibited examples of simple 
abelian fourfolds for which 
$\gpH(V_{A}, \psi_A) \neq \gpL(V_A, \psi_A, D_A)$.
These examples have trivial endomorphism ring, but
the construction was generalized by Pohlmann \cite{Poh} to include some abelian varieties of CM type
(see \cite{MZ} for further discussion). Notwithstanding such constructions, in many cases where $A$ has a large endomorphism algebra as compared to its dimension
(e.g., under the hypotheses of Corollary~\ref{base change on Sato-Tate based on dimension}),
one can show that $\gpH(V_{A}, \psi_A) = \gpL(V_A, \psi_A, D_A)$ and that the Mumford-Tate conjecture holds. 

Returning to the general case,
let  $D \subset \End_{\Q} (V)$ be a $\Q$-subalgebra. Let $D$ admit a continuous $G_F$-action.
Let $(V, \psi)$ be a $D$-equivariant, polarized Hodge structure. Let $(V_l, \psi_l) := 
(V \otimes_{\Q} \Q_l, \psi \otimes_{\Q} \Q_l)$ be a family of Galois representations associated with 
the polarized Hodge structure $(V, \psi)$. In this chapter, we assume that the inclusion
(\ref{Del ineq general}) holds. In this setting, we say that the 
Mumford-Tate conjecture for $(V, \psi)$ is \emph{explained by endomorphisms} if 
the Mumford-Tate conjecture holds and $\gpH(V, \psi) = \gpL(V, \psi, D).$ 
The following theorem asserts that in cases where the Mumford-Tate conjecture is explained by endomorphisms
\emph{and} the twisted decomposable Lefschetz group over $\overline{F}$ is connected, the algebraic Sato-Tate
conjecture is in a sense also explained by endomorphisms.

\begin{theorem}\label{conditions for AST} 
Assume that the following conditions hold.
\begin{itemize} 
\item[1.] $\gpH(V, \psi) = \gpL(V, \psi, D) = \gpDL_{K_e}(V, \psi, D)$.
\item[2.] $(G_{l, K}^{\alg})^{\circ} = \MT(V, \psi)_{\Q_l}.$
\end{itemize}
Then (\ref{ItIsSatoTateGroupIdentification}) holds for every $l.$ 
Consequently, the algebraic Sato-Tate conjecture 
(Conjecture~\ref{general algebraic Sato Tate conj.})
holds for $(V, \psi)$ with
\begin{equation} 
\AST_K (V, \psi) = \gpDL_{K}(V, \psi, D).
\label{SatoTateGroupIdentified}
\end{equation}
\end{theorem}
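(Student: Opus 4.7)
My plan is to establish the sharper equality \eqref{ItIsSatoTateGroupIdentification Ke} over $K_e$ first, promote it to \eqref{ItIsSatoTateGroupIdentification} via Theorem \ref{connected components iso for DL}, and then read off the algebraic Sato-Tate conjecture with $\AST_K(V, \psi) := \gpDL_K(V, \psi, D)$. The core mechanism is a sandwich of $G_{l, K_e, 1}^{\alg}$ between its own identity component and the twisted decomposable Lefschetz group over $K_e$; the hypotheses are exactly what is needed to force these bounds to collide.

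To run the sandwich, I would chain three identifications of $(G_{l, K_e, 1}^{\alg})^{\circ}$: Theorem \ref{pre Serre theorem} gives $(G_{l, K_e, 1}^{\alg})^{\circ} = (G_{l, K, 1}^{\alg})^{\circ}$ because $K_e/K$ is finite; Assumption~2 together with the equivalence noted in Remark \ref{Mumford-Tate Conj and Deligne Theorem analogue} yields $(G_{l, K, 1}^{\alg})^{\circ} = \gpH(V, \psi)_{\Q_l}$; and Assumption~1 identifies $\gpH(V, \psi)$ with $\gpDL_{K_e}(V, \psi, D)$, a group which, via \eqref{twisted Lefschetz} and the equality $\gpH = \gpL$, is already connected. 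Combining these equalities with the containment \eqref{GlLe1alg subset DLLeA} produces
\[
(G_{l, K_e, 1}^{\alg})^{\circ} \subseteq G_{l, K_e, 1}^{\alg} \subseteq \gpDL_{K_e}(V, \psi, D)_{\Q_l} = (G_{l, K_e, 1}^{\alg})^{\circ},
\]
which forces equality throughout and establishes \eqref{ItIsSatoTateGroupIdentification Ke}. Theorem \ref{connected components iso for DL} then promotes this to \eqref{ItIsSatoTateGroupIdentification} over $K$ for every $l$.

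To conclude, I would set $\AST_K(V, \psi) := \gpDL_K(V, \psi, D)$; the isomorphism $\gpast_{l,K}$ of Conjecture \ref{general algebraic Sato Tate conj.}(b) is precisely \eqref{ItIsSatoTateGroupIdentification}. Reductivity in the sense of Remark \ref{reductive algebraic group} holds because, by \eqref{twisted Lefschetz} and Assumption~1, the identity component $\gpDL_K(V, \psi, D)^{\circ} = \gpL(V, \psi, D) = \gpH(V, \psi)$, and the Hodge group is the kernel of $\chi$ restricted to the reductive group $\MT(V, \psi)$, hence reductive. Naturality in $K$ is immediate from the standing containment $\gpDL_L(V, \psi, D) \subseteq \gpDL_K(V, \psi, D)$ for any finite $L/K$ inside $\overline F$, and the naturality of the monomorphism follows from the commutative diagram of Remark \ref{naturality of ast l K}. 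The only step requiring care beyond bookkeeping is to verify that each identification of connected components holds as an equality of algebraic subgroup schemes of $\GIso_{(V_l, \psi_l)}$, so that the sandwich closes scheme-theoretically rather than only at $\Q_l$-points; at the level of generality of the cited results, this is automatic.
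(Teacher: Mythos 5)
Your proposal is correct and follows essentially the same route as the paper's proof: the same sandwich of $G_{l,K_e,1}^{\alg}$ between $(G_{l,K_e,1}^{\alg})^{\circ}=\gpH(V,\psi)_{\Q_l}$ (via Theorem \ref{pre Serre theorem} and Remark \ref{Mumford-Tate Conj and Deligne Theorem analogue}) and $\gpDL_{K_e}(V,\psi,D)_{\Q_l}$ (via \eqref{GlLe1alg subset DLLeA}), closed by Assumption~1 and promoted to $K$ by Theorem \ref{connected components iso for DL}. You merely make explicit the reductivity and naturality checks that the paper leaves implicit.
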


\begin{proof} 
It is enough to prove (\ref{ItIsSatoTateGroupIdentification Le}).
By our assumptions and Remark \ref{Mumford-Tate Conj and Deligne Theorem analogue},
we get
$(G_{l, K_{e}, 1}^{\alg})^{\circ} = \gpH (V, \psi)_{\Q_l} = \gpL(V, \psi, D)_{\Q_l} = 
\gpDL_{K_{e}}(V, \psi, D)_{\Q_l}.$ 
It follows that  $\gpDL_{K_{e}}(V, \psi, D)_{\Q_l}$ is also connected 
for every $l$, and by (\ref{GlLe1alg subset DLLeA}) we obtain 
$(G_{l, K_{e}, 1}^{\alg})^{\circ} = G_{l, K_{e}, 1}^{\alg}$ for every $l.$ 
\end{proof}

\begin{remark} Under the assumptions of Theorem \ref{conditions for AST}, the results of 
Theorems \ref{Mumford-Tate group standard properties},  \ref{Zariski closure standard properties} and \ref{Decomposible Lefschetz group standard properties} show that the algebraic Sato-Tate conjecture holds for $V^s$ for all $s \geq 1$
with 
$\AST_K (V^s, \psi^s) = \gpDL_{K}(V^s, \psi^s, M_{s} (D)) \cong \gpDL_{K}(V, \psi, D) = 
\AST_K (V, \psi, D).$
\end{remark}

Conversely, if the algebraic Sato-Tate conjecture for $(V, \psi, D)$ is explained by endomorphisms,
so is the Mumford-Tate conjecture.
\begin{theorem}\label{conditions for MT via AST}
Assume that
(\ref{ItIsSatoTateGroupIdentification}) and (\ref{SatoTateGroupIdentified}) 
hold for every $l$ (so in particular, the algebraic Sato-Tate conjecture holds).
Moreover, assume that 
$(G_{l, K}^{\alg})^{\circ} \subset \MT(V, \psi)_{\Q_l}$. We then have the following.
\begin{itemize} 
\item[1.] $\gpH(V, \psi) = \gpL(V, \psi, D)$.
\item[2.] $(G_{l, K}^{\alg})^{\circ} = \MT(V, \psi)_{\Q_l}$. 
\end{itemize}
\end{theorem}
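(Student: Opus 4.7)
The plan is to combine the identification of $G_{l,K,1}^{\alg}$ with the twisted Lefschetz group on the Galois side with the diagram from Remark~\ref{Mumford-Tate Conj and Deligne Theorem analogue} on the Hodge-theoretic side, using the containment $\gpH(V,\psi) \subseteq \gpL(V,\psi,D)$ from \eqref{Hodge subset of Lefschetz} as the key bridge.

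First I would pass from the given equality $G_{l, K, 1}^{\alg} = \gpDL_K(V, \psi, D)_{\Q_l}$ to its identity component. By \eqref{twisted Lefschetz}, the connected component of $\gpDL_K(V, \psi, D)$ is exactly $\gpL(V, \psi, D)$, so taking $(-)^{\circ}$ yields
\begin{equation*}
(G_{l, K, 1}^{\alg})^{\circ} = \gpL(V, \psi, D)_{\Q_l}.
\end{equation*}
Next, the hypothesis $(G_{l,K}^{\alg})^{\circ} \subseteq \MT(V, \psi)_{\Q_l}$ is exactly the general Deligne-type inclusion \eqref{Del ineq general}, which by Remark~\ref{Mumford-Tate Conj and Deligne Theorem analogue} is equivalent to
\begin{equation*}
(G_{l, K, 1}^{\alg})^{\circ} \subseteq \gpH(V, \psi)_{\Q_l}.
\end{equation*}

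Combining these two gives $\gpL(V, \psi, D)_{\Q_l} \subseteq \gpH(V, \psi)_{\Q_l}$. Since \eqref{Hodge subset of Lefschetz} supplies the reverse containment $\gpH(V, \psi) \subseteq \gpL(V, \psi, D)$ over $\Q$, we conclude $\gpH(V, \psi)_{\Q_l} = \gpL(V, \psi, D)_{\Q_l}$, and because both groups are defined over $\Q$ (as closed subgroup schemes of $\Iso_{(V,\psi)}$) this descends to $\gpH(V, \psi) = \gpL(V, \psi, D)$, proving (1). Equivalently, we have established $(G_{l, K, 1}^{\alg})^{\circ} = \gpH(V, \psi)_{\Q_l}$.

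To conclude (2), I would invoke the commutative diagram in Remark~\ref{Mumford-Tate Conj and Deligne Theorem analogue} once more. In that diagram, the left and middle columns are short exact sequences with common cokernel $\G_m$ (via $\chi$, using that $n$ is odd), and the horizontal maps between the first two columns are inclusions. Having just shown equality on the first column, a standard five-lemma / snake argument applied to
\begin{equation*}
\xymatrix{
1 \ar[r] & (G_{l,K,1}^{\alg})^{\circ} \ar@{^{(}->}[d] \ar[r] & (G_{l,K}^{\alg})^{\circ} \ar@{^{(}->}[d] \ar[r]^{\chi} & \G_m \ar@{=}[d] \ar[r] & 1 \\
1 \ar[r] & \gpH(V,\psi)_{\Q_l} \ar[r] & \MT(V,\psi)_{\Q_l} \ar[r]^{\chi} & \G_m \ar[r] & 1
}
\end{equation*}
forces the middle inclusion to also be an equality, yielding (2). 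No step looks especially hard; the only point requiring attention is the rationality/$\Q$-descent of the equality $\gpH = \gpL$, which is immediate since both are defined as $\Q$-subgroup schemes of $\Iso_{(V,\psi)}$ and equality can be checked after base change to $\Q_l$.
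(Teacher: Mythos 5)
Your proposal is correct and takes essentially the same approach as the paper: both use the reformulation from Remark~\ref{Mumford-Tate Conj and Deligne Theorem analogue} to translate the hypothesis into $(G_{l,K,1}^{\alg})^{\circ} \subseteq \gpH(V,\psi)_{\Q_l}$, combine it with $\gpH \subseteq \gpL = \gpDL_K^{\circ}$ and the assumed equality $G_{l,K,1}^{\alg} = \gpDL_K(V,\psi,D)_{\Q_l}$ to squeeze out equalities, and then descend from $\Q_l$ to $\Q$ to obtain $\gpH = \gpL$ and invoke the diagram once more for $(G_{l,K}^{\alg})^{\circ} = \MT(V,\psi)_{\Q_l}$.
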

\begin{proof} 
By our assumptions and Remark  \ref{Mumford-Tate Conj and Deligne Theorem analogue} 
(see \eqref{Del ineq1}), we have
\begin{equation}
(G_{l, K, 1}^{\alg})^{\circ} \subseteq \gpH(V, \psi)_{\Q_l} \subseteq \gpL(V, \psi, D)_{\Q_l} 
= \gpDL_K(V, \psi, D)_{\Q_l}^{\circ}.
\label{GlK1 connected  and HA and LKA}
\end{equation}  
By (\ref{SatoTateGroupIdentified}) we get
\begin{equation}
(G_{l, K, 1}^{\alg})^{\circ} = \gpH(V, \psi)_{\Q_l} = \gpL(V, \psi, D)_{\Q_l} 
= \gpDL_K(V, \psi, D)_{\Q_l}^{\circ}.
\label{GlK1 connected and SatoTate gives HAl equal LKAl}
\end{equation}    
Hence by Remark \ref{Mumford-Tate Conj and Deligne Theorem analogue}, we obtain
$(G_{l, K}^{\alg})^{\circ} = \MT(V, \psi)_{\Q_l}.$
Moreover, since $\gpH(V, \psi)$ is closed in $\gpL (V, \psi, D)$, 
(\ref{GlK1 connected and SatoTate gives HAl equal LKAl}) gives
\begin{equation}
\gpH(V, \psi) = \gpL(V, \psi, D).
\label{GlK1 connected and SatoTate gives HA equal LKA}
\end{equation}
\end{proof}

\begin{remark}
Recall that 
\begin{equation}
\gpL (V, \psi, D) = \gpDL_{K} (V, \psi, D)^{\circ} 
\,\, \triangleleft \,\, \gpDL_{K}^{\id} (V, \psi, D) 
\,\, \triangleleft\,\,  \gpDL_{K} (V, \psi, D).
\end{equation}
Consider the following epimorphism of groups:
\begin{equation}
\gpDL_{K} (V, \psi, D) / \gpL(V, \psi, D) \, \rightarrow \, \gpDL_{K} (V, \psi, D) / \gpDL_{K}^{\id}(V, \psi, D) 
\, \cong \, G (K_e / K).
\label{DLKA to Galois Le over K}\end{equation}
If $(V, \psi, D)$ satisfies the assumptions of Theorem 
\ref{conditions for AST}, then the epimorphism 
(\ref{DLKA to Galois Le over K}) is an isomorphism. 
In this case we have an identification 
\begin{equation}
\pi_{0} (\AST_K(V, \psi, D) ) \, \cong \, \Gal(K_{e}/K).
\label{connected components of AST as Galois L0 over K}
\end{equation}
\end{remark}

%------\section{Motivic Galois group and motivic Serre group}------

\section{Motivic Galois group and motivic Serre group}

In the following sections we will give construction of the general algebraic Sato-Tate group 
in the category of motives for absolute Hodge cycles. See \cite{DM} (cf. \cite{Ja90}, \cite{Pan}, 
\cite{Se94}) concerning the construction and properties of the 
category of motives for absolute Hodge cycles. We will also make the $\ell$-adic realization of 
this construction explicit, and show that if a suitably motivic form of the Mumford-Tate conjecture 
holds then the algebraic Sato-Tate conjecture holds as well.

\begin{remark}
The category of motives for 
absolute Hodge cycles enjoys very nice properties: it is a semisimple abelian category and 
its Hom's are finite-dimen\-sional $\Q$-vector spaces. It is mainly due to the fact that the definition 
of Hom's is explained via the Betti, \' etale and de Rham realizations \cite[Prop. 6.1, p.\ 197]{DM}.
The advantage of use of this category of motives is that we do not need to assume standard conjectures
in our constructions.  
\end{remark}

\begin{definition}
Let $K$ be a number field. Choose an embedding of $K$ into $\overline{K}$. 
Let $\mathcal{M}_{K}$ (resp. $\mathcal{M}_{\overline{K}}$) (see \cite{DM})
be the motivic category for absolute Hodge cycles over $K$   (resp. ${\overline{K}}$). 
The Betti realization defines the fiber functor $H_{B}$ :
\begin{equation}
H_{B} : \mathcal{M}_{K} \rightarrow {\rm Vec}_{\Q}.
\label{fiber functor1}\end{equation}
\end{definition}
\medskip

The functor $H_B$ factors through the functor
\begin{equation}
\mathcal{M}_{K} \rightarrow 
\mathcal{M}_{\overline{K}}, \qquad
M \mapsto \overline{M} := M \otimes_{K} \overline{K}.
\label{tensor over K with overline K}
\end{equation}

\noindent
For $M \in 
\mathcal{M}_{K}$ let $\mathcal{M}_{K} (M)$ denote the smallest Tannakian subcategory of 
$\mathcal{M}_{K}$ containing $M.$ 
Let $H_{B} | \mathcal{M}_K (M)$ be the restriction of $H_B$ to 
$\mathcal{M}_K (M).$

\begin{definition}
The motivic Galois groups are defined as follows \cite{DM}, \cite{Se94}:
\begin{equation}
G_{\mathcal{M}_{K}} := {\rm Aut}^{\otimes} (H_{B}),
\label{Motivic Galois group I}\end{equation}
\begin{equation}
G_{\mathcal{M}_{K} (M)} := {\rm Aut}^{\otimes} (H_{B} | \mathcal{M}_{K} (M)). 
\label{Motivic Galois group II}\end{equation}
\end{definition}
\medskip

The algebraic groups $G_{\mathcal{M}_{K} (M)}$ are reductive
but not necessarily connected (see \cite[Prop.\ 2.23, p.\ 141]{DM}, 
cf. \cite[Prop.\ 6.23, p.\ 214]{DM}, \cite[p.\ 379]{Se94}). 
Observe that the finite-dimensional $\Q$-vector space 
${{\rm Hom}}_{\mathcal{M}_{\overline{K}}} 
(\overline{M}, \, \overline{N}) \in \mathcal{M}_{K}^{0}$ 
is a discrete $G_K$-module, so we consider it as an Artin motive.
Recall that $\mathcal{M}_{K}^{0}$ is equivalent to ${\rm Rep}_{\Q} (G_K)$,
the category of finite-dimensional $\Q$-vector spaces with continuous actions of $G_K.$

\begin{definition}
Fix a motive $M$ and put:
\begin{equation}
D := D (M) := {{\rm End}}_{\mathcal{M}_{\overline{K}}} 
(\overline{M})
\label{The ring of endomorphisms of a motive}\end{equation}
Let $h^{0}(D)$ denote the Artin motive corresponding to $D.$ 
Let $\mathcal{M}_{K}^{0} (D)$ be the smallest Tannakian subcategory of 
$\mathcal{M}_{K}^{0}$ containing $h^{0}(D)$ and put:
\begin{equation} 
G_{\mathcal{M}_{K}^{0} (D)} := {\rm Aut}^{\otimes} (H_{B}^{0} | \mathcal{M}_{K}^{0} (D)).
\label{Motivic Galois group II for D}
\end{equation}
\label{def of h0D}\end{definition}

There is a natural embedding of motives \cite[p.\ 215]{DM}, \cite[p.\ 53]{Ja90}: 
\begin{equation}
h^0(D) \subset \underline{{\rm End}}_{\mathcal{M}_{K} (M)} (M) = 
\underline{{\rm End}}_{\mathcal{M}_{K}} (M).
\label{h0D imbeds into end h1 A}\end{equation}
Recall that ${\underline{\rm End}}_{\mathcal{M}_{K} (M)} (M) = 
M^{\vee} \otimes M\in \mathcal{M}_{K} (M).$ 
In addition $G_{\mathcal{M}_{K}^{0}} \cong G_K$, so we observe that
\begin{equation}
G_{\mathcal{M}_{K}^{0} (D)} \cong \Gal(K_{e}/K).
\nonumber\end{equation}
Since $\mathcal{M}_{K}$ is semisimple \cite[Prop.\ 6.5]{DM} and $\mathcal{M}_{K} (M)$ is a strictly full
subcategory of $\mathcal{M}_{K}$, the motive $h^0(D) $ splits 
off of 
$\underline{{\rm End}}_{\mathcal{M}_{K} (M)} (M)$ in $\mathcal{M}_{K}.$
Moreover the semisimplicity of $\mathcal{M}_{K}$, together with the observation that 
$\mathcal{M}_{K}^{0}$ and $\mathcal{M}_K (M)$ are strictly full subcategories of
$\mathcal{M}_K$, shows that the top horizontal and left vertical maps in the following diagram 
are faithfully flat
(see \cite[(2.29)]{DM}):
\begin{equation}
{\xymatrix{
G_{\mathcal{M}_{K}} \ar@{>>}[d]^{} \ar@{>>}[r]^{}  & G_{K} \ar@{>>}[d]^{} \\
G_{\mathcal{M}_{K} (M)} \ar@{>>}[r]^{}  & \Gal(K_e/K)}}
\label{surjection of motivic Galois groups}
\end{equation}
In particular all homomorphisms in \eqref{surjection of motivic Galois groups} are surjective.
\bigskip

In the construction of $\mathcal{M}_{K}$ \cite[p.200--203]{DM} one starts with {{\bf effective motives}}
$h(X)$ and morphisms between them:
\begin{equation}
{\rm{Hom}}_{\mathcal{M}_{K}} (h(X), \, h(Y)) := 
{\rm{Mor}}^{0}_{\AH} (X, \, Y) := \CH_{\AH}^d (X \times Y)
\label{definition of Hom in MK}\end{equation}
where $X$ and $Y$ are smooth projective over $K$ and $X$ is of pure dimension $d.$ 
This leads swiftly (via Karoubian envelope construction etc.) to
the definition of the motivic category for absolute Hodge cycles $\mathcal{M}_{K}.$ In particular 
${\rm{Hom}}_{\mathcal{M}_{K}} (M, \, N),$ for any 
$M, N \in \mathcal{M}_{K},$ are relatively easy to handle. The obvious grading of the cohomology ring
brings the decomposition of the identity on $h (X)$ into a sum of the 
natural projectors:
\begin{equation}
{\rm{id}}_{h(X)} \, = \, \sum_{i \geq 0} \,\, \pi^i 
\label{decomposition of id on h(X) into projectors p i}\end{equation} 
As a result we get the natural decomposition 
\cite[p. 201--202]{DM}:
\begin{equation}
h(X) \, = \, \bigoplus_{i \geq 0} \,\, h^i (X) 
\label{decomposition of hX in to dir sum of hiM}\end{equation}
where $ h^i (X) := ( h (X), \, \pi^i).$ See also \cite{Ja90} and \cite{Pan}
for additional information about $\mathcal{M}_{K}.$
\medskip

%Let $M \in \mathcal{M}_{K}$ be an effective motive of the form
%$M = h (X)$ where $X/K$ is smooth. If $N = h(Y)$ is another such a motive 
%then by \cite[Prop. 6.1, p. 197]{DM} we have
%\begin{equation}
%{\rm{Hom}}_{\mathcal{M}_{K}} (h (X), \, h(Y)) \otimes \Q_l \, \hookrightarrow \, 
%{\rm{Hom}}_{\Q_l [G_K]} (H^{\ast}_{et} (\overline{X},\, \Q_l), \, 
%H^{\ast}_{et} (\overline{Y},\, \Q_l))
%\label{motivic hom injects into l-adic realization 1}
%\end{equation}
%where the ${\rm Hom}$ on the right hand side of 
%(\ref{motivic hom injects into l-adic realization 1}) is the 
%${\rm Hom}$ of graded objects:
%$${\rm{Hom}}_{\Q_l [G_K]} (H^{\ast}_{et} (\overline{X},\, \Q_l), \, 
%H^{\ast}_{et} (\overline{Y},\, \Q_l)) := 
%\bigoplus_{r \geq 0} \,\, {\rm{Hom}}_{\Q_l [G_K]} (H^{r}_{et} (\overline{X},\, \Q_l), \, 
%H^{r}_{et} (\overline{Y},\, \Q_l)).$$

%--------------------------
%Let $M \in \mathcal{M}_{K}$ be homogeneous and effective motive of the form
%$M = (h^r (X), p)$ where $X/K$ is smooth and $p$ is a projector. At first we will
%do all our computations for motives of this form.
%To make things simpler we will work with $M = h^r (X).$ If $N = h^r (Y)$ is another such motive 
%then by \cite[Prop. 6.1, p. 197]{DM} we have
%\begin{equation}
%{\rm{Hom}}_{\mathcal{M}_{K}} (h^r (X), \, h^r (Y)) \otimes \Q_l \, \hookrightarrow \, 
%{\rm{Hom}}_{\Q_l [G_K]} (H^{r}_{et} (\overline{X},\, \Q_l), \, 
%H^{r}_{et} (\overline{Y},\, \Q_l))
%\end{equation}

%--------------------------

Since $\mathcal{M}_{K}$ is abelian and semisimple, every motive $M \in \mathcal{M}_{K}$ 
is a direct summand of $h (X) (m),$ the twist of $h(X)$ by the $m$-th power of the 
Lefschetz motive $\LL := h^2 (\PP^1)$ for some $m \in \Z.$ The direct summands of 
motives of the form $h^r (X) (m)$ will be called \emph{homogeneous motives}.
Let $L/K$ be a field extension such that 
$K \subset L \subset \overline{K}.$ 
Then in $\mathcal{M}_{L}$, the motive $M \otimes_{K} L \in \mathcal{M}_{L}$ 
is a direct summand of the motive $h (X \otimes_{K} L) (m).$ 

%For $M \in \mathcal{M}_{K}$ let $\overline{M} := 
%M \otimes_{K} \overline{K} \in \mathcal{M}_{\overline{K}}$  . 
%Hence for any two motives $M,\, N \in \mathcal{M}_{K}$ the property
%(\ref{motivic hom injects into l-adic realization 1}) gives:
 
%\begin{equation}
%{\rm{Hom}}_{\mathcal{M}_{K}} (M, \, N) \otimes \Q_l \, \hookrightarrow \, 
%{\rm{Hom}}_{\Q_l [G_K]} (H_{l} (\overline{M}), \, 
%H_{l} (\overline{N}))
%\label{motivic hom injects into l-adic realization 2}\end{equation}
%where $H_{l} (\overline{M})$ be the $l$-adic realization of 
%$\overline{M} \in \mathcal{M}_{\overline{K}}.$ 
\medskip

%By \cite[Theorem 6.7 (i)]{DM} there exists a polarisation on the category $\mathcal{M}_{K}$ 
%that is constructed starting from polarisations of homogeneous effective motives of the form 
%$h^r (X)$. 

Observe that $H_{B} | \mathcal{M}_{K} (h^r (X)) \,\, (h^r (X)) \, = \,  H_{B} (h^r (X))  
\, = \, H^{r} (X(\C), \, \Q)$
and $V := H^{r} (X(\C), \, \Q)$ admits a $\Q$-rational polarized 
Hodge structure of weight $r$ with polarization $\psi^r$. 
The polarization comes up as follows. It is shown in \cite[pp.\ 197--199]{DM} (cf. \cite[p.\ 478--480]{Pan}, \cite[pp.\ 2--4]{Ja90})   
that if $\dim \, X = d$, then there is an element
$\psi^r \in \CH^{2d-r}_{\AH} (X \times X)$ such that for every embedding 
$\sigma  : K \hookrightarrow \C$, $\psi^r$ induces a $\Q$-bilinear map:
\begin{equation}
\psi^r \,\, : \,\, H^{r}_{\sigma} (X (\C), \, \Q) \times 
H^{r}_{\sigma} (X (\C), \, \Q) \rightarrow \Q (-r)
\label{Q Hodge structure for motives} 
\end{equation} 
which gives the polarization $\psi_{\R}^{r} := \psi^{r} \otimes_{\Q} \, \R$ of the real Hodge structure:
\begin{equation}
\psi_{\R}^{r} \,\, : \,\, H^{r}_{\sigma} (X (\C), \, \R) \times H^{r}_{\sigma} (X (\C), \, \R) \rightarrow 
\R (-r)
\label{R Hodge structure for motives}.
\end{equation} 
It is then shown
\cite[Prop.\ 6.1 (e), p.\ 197]{DM} that the Hodge decomposition of 
$V \otimes_{\Q} \C$ is $D = D(M)$-equivariant for $M = h^r(X).$ 
\medskip

In effect, for any homogeneous motive $M \in \mathcal{M}_{K},$ 
this induces the polarization of the real Hodge structure associated with the 
rational Hodge structure on the Betti realization $V := H_B (M).$  
The Hodge decomposition of $V \otimes_{\Q} \C$ is again $D = D(M)$-equivariant.
\bigskip

From now on in this paper, $M$ will always denote a homogeneous motive. 

\medskip

By the definition and properties of ${\rm Aut}^{\otimes} (H_{B} | \mathcal{M}_{K} (M))$,
cf.\ \cite[p.\ 128--130]{DM} and computations in \cite[p.\ 198--199]{DM}, we have:

\begin{equation}
G_{\mathcal{M}_{K}(M)} \subset \GIso_{(V, \psi)}. 
\label{GMKA subset of GSpV}\end{equation}

\begin{definition}
Define the following algebraic groups:
\begin{align*}
G_{\mathcal{M}_{K}(M), 1}  &:= G_{\mathcal{M}_{K}(M)} \cap \Iso_{(V, \psi)} \\
G_{\mathcal{M}_{K}(M), 1}^{\circ} &:= (G_{\mathcal{M}_{K}(M)})^{\circ} \cap 
\Iso_{(V, \psi)}.
\end{align*}
The algebraic group $G_{\mathcal{M}_{K}(M), 1}$ will be called 
the \emph{motivic Serre group}.
\label{motivic Galois group, motivic Serre group}\end{definition}
\medskip

\begin{remark}
Serre denotes the group $G_{\mathcal{M}_{K}(M), 1}$ by $G_{\mathcal{M}_{K}(M)}^1$
\cite[p.\ 396]{Se94}. 
\end{remark}

\begin{definition}
For any $\tau \in \Gal(K_{e}/K)$, put
\begin{equation}
\GIso_{(V, \psi)}^{\tau} := 
\{g \in \GIso_{(V, \psi)}:\,\, g \beta g^{-1} = 
\rho_{e}(\tau)(\beta) \,\ \forall \beta \in D\}.
\label{def of GSptau}\end{equation}
\end{definition}
\bigskip

We have:
\begin{equation}
\bigsqcup_{\tau \in \Gal(K_{e}/K)} \, \GIso_{(V, \psi)}^{\tau} \subset \GIso_{(V, \psi)}.
\label{GSp decomposition into GSptau}
\end{equation}
Observe that
\noindent
\begin{equation}
\GIso_{(V, \psi)}^{\id}  = C_{D}( \GIso_{(V, \psi)}).
\label{GSpId is CDGSp}
\end{equation}

\begin{remark}
The bottom horizontal arrow in the diagram (\ref{surjection of motivic Galois groups}) is
\begin{equation}
G_{\mathcal{M}_{K} (M)} \rightarrow G_{\mathcal{M}_{K} (D)} \cong \Gal(K_{e}/K).
\label{map from GMKA to Gal}\end{equation}
Let $g \in G_{\mathcal{M}_{K} (M)}$ and let $\tau := \tau (g)$ be the image of $g$ via 
the map (\ref{map from GMKA to Gal}). Hence for any element $\beta \in D$ considered as an 
endomorphism of $V$ we have:
\begin{equation}
g \beta g^{-1} = \rho_{e}(\tau)(\beta).
\label{GMKA1 acts on V via Galois}
\end{equation}
\end{remark}

\begin{definition}
For any $\tau \in \Gal(K_{e}/K)$, put
\begin{equation}
G_{\mathcal{M}_{K}(M)}^{\tau} := 
\{g \in G_{\mathcal{M}_{K}(M)}:\,\, g \beta g^{-1} = 
\rho_{e}(\tau)(\beta), \,\ \forall \beta \in D\}.
\label{def of GMKAtau}\end{equation}
\end{definition}
\bigskip

It follows from (\ref{GMKA1 acts on V via Galois}), (\ref{def of GMKAtau}), and the surjectivity of 
(\ref{map from GMKA to Gal}) that
\begin{equation}
G_{\mathcal{M}_{K}(M)} \,\, = \bigsqcup_{\tau \in \Gal(K_{e}/K)} \, G_{\mathcal{M}_{K}(M)}^{\tau}
\label{GMKA decomposition into GMKAtau}
\end{equation}
It is clear from (\ref{GMKA subset of GSpV}) and (\ref{def of GSptau}) that 
\begin{equation}
G_{\mathcal{M}_{K}(M)}^{\tau} \subset \GIso_{(V, \psi)}^{\tau}. 
\label{GMKAtau subset of GSptau}\end{equation}
Hence (\ref{GMKA1 acts on V via Galois}) and (\ref{GMKA decomposition into GMKAtau}) give
\begin{equation}
(G_{\mathcal{M}(M)})^{\circ}  \triangleleft G_{\mathcal{M}_{K}(M)}^{{\id}}  \triangleleft  G_{\mathcal{M}_{K}(M)}.
\label{GMKAId normal div in GMKA}
\end{equation}
The map (\ref{map from GMKA to Gal}) gives the following natural map: 
\begin{equation}
G_{\mathcal{M}_{K} (M), 1} \rightarrow  \Gal(K_{e}/K).
\label{map from GMKA1 to Gal}\end{equation}

\begin{definition}
For any $\tau \in \Gal(K_{e}/K)$ put
\begin{equation}
G_{\mathcal{M}_{K}(M), 1}^{\tau} := 
\{g \in G_{\mathcal{M}_{K}(M), 1}:\,\, g \beta g^{-1} = 
\rho_{e}(\tau)(\beta), \,\ \forall \beta \in D\}.
\label{def of GMKA1tau}\end{equation}
\label{Definition of GMKA1tau}\end{definition}
\medskip

\noindent
It follows that there is the following equality
\begin{equation}
G_{\mathcal{M}_{K}(M), 1}^{\tau} = G_{\mathcal{M}_{K}(M), 1} \cap G_{\mathcal{M}_{K}(M)}^{\tau}.
\label{GMKA1tau equals GMKA1 cap GMKAtau}
\end{equation}

\noindent
Let $\tau \in \Gal(K_{e}/K).$ By (\ref{decomposable twisted Lefschetz for fixed element}), 
(\ref{decomposition into twisted Lefschetz for fixed elements}), 
(\ref{GMKAtau subset of GSptau}) 
 we have
\begin{align}
G_{\mathcal{M}_{K}(M), 1}^{\tau} &\subset DL_{K}^{\tau}(V,\, \psi, D)
\label{GMKAtau subset DLKAtau} \\
G_{\mathcal{M}_{K}(M), 1} &\subset DL_{K}(V,\, \psi, D). 
\label{GMKA1 subset of DLKA}
\end{align}
The equality (\ref{GMKA decomposition into GMKAtau}) gives:
\begin{equation}
G_{\mathcal{M}_{K}(M), 1} = \bigsqcup_{\tau \in \Gal(K_{e}/K)} \, G_{\mathcal{M}_{K}(M), 1}^{\tau}
\label{GMKA1 decomposition into GMKA1tau}
\end{equation}
Hence:
\begin{equation}
(G_{\mathcal{M}_K(M), 1})^{\circ}  \,\, \triangleleft \,\, G_{\mathcal{M}_{K}(M), 1}^{{\id}}
\,\, \triangleleft \,\, G_{\mathcal{M}_{K}(M), 1},
\label{GMKA1Id normal div in GMKA1}
\end{equation}
so (\ref{GMKA1tau equals GMKA1 cap GMKAtau}) gives:
\begin{equation}
G_{\mathcal{M}_K(M), 1}  /\, G_{\mathcal{M}_{K}(M), 1}^{{\id}} \,\, \subset \,\, 
G_{\mathcal{M}_K(M)}  / \, G_{\mathcal{M}_{K}(M)}^{{\id}}.   
\label{GMKA1 mod GMKA1Id subset GMKA mod GMKAId}
\end{equation}

\begin{remark}
The $l$-adic representation
\begin{equation}
\rho_l \, : \, G_K \rightarrow \GL(V_l)
\label{l-adic representation associated with M}\end{equation} 
associated with $M$ factors through $G_{\mathcal{M}_{K}(M)} (\Q_l)$
(see \cite[Corollary p.\ 473--474]{Pan} cf. \cite[p.\ 386]{Se94}).
Hence
\begin{equation}
G_{l, K}^{\alg} \subset {G_{\mathcal{M}_{K}(M)}}_{\Q_l} 
\label{GlKalg subset GMKAQl}\end{equation}
where ${G_{\mathcal{M}_{K}(M)}}_{\Q_l} := {G_{\mathcal{M}_{K}(M)}} \otimes_{\Q} \Q_l.$

%-----section{Motivic Mumford-Tate and Sato-Tate groups for homogeneous motives}------

\section{Motivic Mumford-Tate and Motivic Serre groups}

Since $X/K$ is smooth projective and hence proper, Remarks    
\ref{properties of Hodge-Tate representations } and 
\ref{Hodge-Tate representations in etale cohomology} show that 
$V_l := H^r (\overline{X}, \, \Q_l),$ the $l$-adic realization of 
the motive $h^r(X),$ is of Hodge-Tate type. Hence the image of 
the representation $\rho_l,$
contains an open subset of homotheties of the group $\GL (V_l)$ \cite[Prop.\ 2.8]{Su},
and similarly for any Tate twist such that
$H^r (\overline{X}, \, \Q_l (m))$ has nonzero weights.

\begin{remark}
In the previous statement, the assumption of nonzero weights is essential. 
Indeed, if $X$ has dimension 
$d$, then $H^{2d} (\overline{X}, \, \Q_l (d)) \cong \Q_l$ as $G_K$-modules.
Hence the action of $G_K$ on $H^{2d} (\overline{X}, \, \Q_l (d))$ is trivial,
so the image of the Galois representation is a trivial group 
and hence does not contain homotheties.
\end{remark}

\noindent
From now until the end of the paper, let $M \in \mathcal{M}_{K}$ be a motive which is a direct 
summand of a motive of the form $h^r (X)(m).$ We assume that the $l$-adic realization of $h^r (X)(m)$
has nonzero weights with repect to the $G_K$-action. The $l$-adic realization of $\overline{M}$ is a 
$\Q_l[G_F]$-direct summand of the $l$-adic realization of $h^r (X)(m).$ Hence the $l$-adic 
representation corresponding to $V_l := H_l (\overline{M})$ has image that contains 
an open subgroup of homotheties. 
\medskip

In the following commutative diagram, all horizontal arrows 
are closed immersions and the columns are exact.
$$
\xymatrix{
\quad 1 \ar@<0.1ex>[d]^{}   \quad & \quad \, 1 \ar@<0.1ex>[d]^{}  
\quad & \,\,  1 \ar@<0.1ex>[d]^{}\\
\quad G_{l, K, 1}^{\alg} \ar@<0.1ex>[d]^{} \ar[r]^{}   \quad & 
\quad {G_{\mathcal{M}_{K}(M), 1}}_{\Q_l}  \ar@<0.1ex>[d]^{} \ar[r]^{} \quad & \quad
\Iso_{(V_l, \psi_l)} \ar@<0.1ex>[d]^{}\\ 
\quad G_{l, K}^{\alg} \ar@<0.1ex>[d]^{} \ar[r]^{}   \quad & 
\quad {G_{\mathcal{M}_{K}(M)}}_{\Q_l} \ar@<0.1ex>[d]^{} \ar[r]^{} \quad & \quad
\GIso_{(V_l, \psi_l)} \ar@<0.1ex>[d]^{}\\
\quad\G_{m} \ar@<0.1ex>[d]^{} \ar[r]^{=}  \quad & \quad  
\G_{m} \ar@<0.1ex>[d]^{} \ar[r]^{=}  \quad & \quad  \G_{m} \ar@<0.1ex>[d]^{}\\
\quad 1    \quad & \quad \, 1   \quad & \,\,  1\\}
\label{GlK1 subset GMKA1Ql}$$ 
In particular it follows that:
\begin{equation}
G_{l, K, 1}^{\alg} \subset (G_{\mathcal{M}_{K}(M), 1})_{\Q_l}. 
\label{GlK1alg subset GMKA1Ql}\end{equation}
\end{remark}

We have the following analogue of Theorem~\ref{equality of conn comp 
for Glalg and Glalg1}. 
\begin{theorem}\label{equality of conn comp for GM and GM1} 
Assume that $G_{\mathcal{M}_{K}(M), 1}^{\circ}$ is connected. Then the following map is an isomorphism:
$$i_{M} \,\, : \,\,  \pi_{0} (G_{\mathcal{M}_{K}(M), 1}) \,\,\,\, 
{\stackrel{\cong}{\longrightarrow}} \,\,\,\, \pi_{0} (G_{\mathcal{M}_{K}(M)}).$$  
\end{theorem}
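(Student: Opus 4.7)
The plan is to reproduce the argument behind Theorem~\ref{equality of conn comp for Glalg and Glalg1} in the motivic setting. The essential input is the short exact sequence
$$1 \to G_{\mathcal{M}_{K}(M), 1} \to G_{\mathcal{M}_{K}(M)} {\stackrel{\chi}{\longrightarrow}} \G_{m} \to 1,$$
which is read off from the commutative diagram immediately preceding the theorem and which plays the role of \eqref{The exact sequence for GlK1alg and GlKalg} in the $l$-adic argument. No additional geometric input is needed beyond this sequence and the standing hypothesis.

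First I would pass to identity components. Since $G_{\mathcal{M}_{K}(M)}^{\circ}$ has finite index in $G_{\mathcal{M}_{K}(M)}$, its image under $\chi$ is a finite-index subgroup of the connected group $\G_{m}$, and hence equals $\G_{m}$; the kernel of this restriction is by Definition~\ref{motivic Galois group, motivic Serre group} precisely $G_{\mathcal{M}_{K}(M), 1}^{\circ} = G_{\mathcal{M}_{K}(M)}^{\circ} \cap \Iso_{(V,\psi)}$. This produces the exact sequence
$$1 \to G_{\mathcal{M}_{K}(M), 1}^{\circ} \to G_{\mathcal{M}_{K}(M)}^{\circ} \to \G_{m} \to 1.$$
The hypothesis now enters in a critical way: because $G_{\mathcal{M}_{K}(M), 1}^{\circ}$ is connected and is a clopen subgroup of $G_{\mathcal{M}_{K}(M), 1}$ containing the identity (being cut out by the clopen subgroup $G_{\mathcal{M}_{K}(M)}^{\circ}$), it must coincide with the true identity component $(G_{\mathcal{M}_{K}(M), 1})^{\circ}$, so that $\pi_{0}(G_{\mathcal{M}_{K}(M), 1}) = G_{\mathcal{M}_{K}(M), 1}/G_{\mathcal{M}_{K}(M), 1}^{\circ}$.

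I would then finish by stacking the two exact sequences in a commutative diagram with the identity map on the rightmost $\G_{m}$ factor and invoking the snake lemma exactly as in the proof of Theorem~\ref{pre Serre theorem}. Equivalently, one may verify both directions directly: injectivity of $i_{M}$ because any $g \in G_{\mathcal{M}_{K}(M), 1} \cap G_{\mathcal{M}_{K}(M)}^{\circ}$ lies in $G_{\mathcal{M}_{K}(M), 1}^{\circ}$, and surjectivity because any $g \in G_{\mathcal{M}_{K}(M)}$ can be modified by some $g^{\circ} \in G_{\mathcal{M}_{K}(M)}^{\circ}$ with $\chi(g^{\circ}) = \chi(g)$, producing $g(g^{\circ})^{-1} \in G_{\mathcal{M}_{K}(M), 1}$ in the same $G_{\mathcal{M}_{K}(M)}^{\circ}$-coset as $g$.

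The only genuine obstacle is the connectedness hypothesis itself; once granted, everything else is formal and parallels the role of Proposition~\ref{L0realizing conn comp} in the $l$-adic case (whose proof leaned on Bogomolov's homothety theorem). Without the identification $G_{\mathcal{M}_{K}(M), 1}^{\circ} = (G_{\mathcal{M}_{K}(M), 1})^{\circ}$, the snake lemma would only match $\pi_{0}(G_{\mathcal{M}_{K}(M)})$ against the quotient $G_{\mathcal{M}_{K}(M), 1}/G_{\mathcal{M}_{K}(M), 1}^{\circ}$, which could be strictly smaller than $\pi_{0}(G_{\mathcal{M}_{K}(M), 1})$.
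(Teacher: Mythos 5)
Your proposal is correct and follows essentially the same route as the paper: both arguments rest on the exact sequence $1 \to G_{\mathcal{M}_{K}(M),1} \to G_{\mathcal{M}_{K}(M)} \xrightarrow{\chi} \G_m \to 1$ read off the diagram preceding the theorem, use the connectedness hypothesis to identify $G_{\mathcal{M}_{K}(M),1}^{\circ}$ with the true identity component $(G_{\mathcal{M}_{K}(M),1})^{\circ}$, and conclude by a snake-lemma/diagram chase on the stacked $\pi_0$ sequences. Your clopen-subgroup justification of that identification is a harmless variant of the paper's equal-dimension argument.
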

\begin{proof}  
We will write $\mathcal{M}(M)$ for $\mathcal{M}_{K} (M)$ in the following commutative diagram to make notation simpler. 
$$
\xymatrix{
\quad & \quad 1 \ar@<0.1ex>[d]^{}  \quad & \quad 1 \ar@<0.1ex>[d]^{} \quad & 
1 \ar@<0.1ex>[d]^{} \\ 
1 \ar[r]^{} \quad & \quad  (G_{\mathcal{M}(M), 1})^{\circ}\ar@<0.1ex>[d]^{} \ar[r]^{}  \quad 
& \quad G_{\mathcal{M}(M), 1} \ar@<0.1ex>[d]^{} \ar[r]^{}  \quad 
& \quad \pi_{0} (G_{\mathcal{M}(M), 1})   
\ar@<0.1ex>[d]^{i_{M}} \ar[r]^{}  \quad 
& \,\, 1 \\ 
1 \ar[r]^{} \quad & \quad (G_{\mathcal{M}(M)})^{\circ} \ar@<0.1ex>[d]^{} \ar[r]^{}  \quad 
& \quad  G_{\mathcal{M}(M)} \ar@<0.1ex>[d]^{} \ar[r]^{}  \quad 
& \quad \pi_{0}(G_{\mathcal{M}(M)})   \ar@<0.1ex>[d]^{} \ar[r]^{}  \quad 
&  \,\, 1 \\
1 \ar[r]^{} \quad & \quad \G_m  \ar@<0.1ex>[d]^{}  \ar[r]^{=}  
\quad & \quad \G_m  \ar@<0.1ex>[d]^{}  \ar[r]^{} \quad & 1\\
 \quad & \quad 1   \quad & 1 \\}
\label{motivic diagram in Serre theorem}$$

By definition the rows are exact. The middle column is exact by the definition
of $G_{\mathcal{M}_{K}(M), 1}$ and the exactness of the middle column 
in the previous diagram. 
Hence the map $i_M$ is surjective. Since $G_{\mathcal{M}_{K}(M), 1}^{\circ}$ has the 
same dimension as $G_{\mathcal{M}_{K}(M), 1}$ and by assumption 
$G_{\mathcal{M}_{K}(M), 1}^{\circ}$ is connected,
we then have $G_{\mathcal{M}_{K}(M), 1}^{\circ} = (G_{\mathcal{M}_{K}(M), 1})^{\circ}.$ 
Hence the left column is also exact. This shows that $i_M$ is an isomorphism.
\end{proof}

\begin{remark}
Since $G_{\mathcal{M}_K(M)}$ is reductive, the middle vertical column of the diagram 
of the proof of Theorem~\ref{equality of conn comp for GM and GM1}
shows that $G_{\mathcal{M}_K(M), 1}$ is also reductive.
\label{GMKA1 is reductive}\end{remark}

\begin{corollary}\label{equality of quotients concerning for GM and GM1} 
Assume that $G_{\mathcal{M}_{K}(M), 1}^{\circ}$ is connected. Then there are natural isomorphisms

\begin{align}
G_{\mathcal{M}_{K}(M), 1}/\,  G_{\mathcal{M}_{K}(M), 1}^{\id} \,\, 
&{\stackrel{\cong}{\longrightarrow}} \,\, 
G_{\mathcal{M}_{K}(M)} /\, G_{\mathcal{M}_{K}(M)}^{\id},
\label{GMKA1 mod GMKA1Id equals GMKA mod GMKAId} \\
G_{\mathcal{M}_{K}(M), 1}^{\id}/\, (G_{\mathcal{M}_{K}(M), 1})^{\circ} \,\,
&{\stackrel{\cong}{\longrightarrow}} \,\,
G_{\mathcal{M}_{K}(M)}^{\id}/\, (G_{\mathcal{M}_{K}(M)})^{\circ}, 
\label{GMKA1Id mod GMKA1circ equals GMKAId mod GMKAcirc} \\
G_{\mathcal{M}_{K}(M), 1}/\, G_{\mathcal{M}_{K}(M), 1}^{\id} \,\, 
&{\stackrel{\cong}{\longrightarrow}} \,\, \gpDL_{K} (V, \psi, D) /\, \gpDL_{K}^{\id}(V, \psi, D)  
\,\, {\stackrel{\cong}{\longrightarrow}} \,\, \Gal(K_e/K).
\label{GMKA1 mod GMKA1Id equals DLKA mod DLKAId equals GLeK} 
\end{align}
In particular the natural map (\ref{map from GMKA1 to Gal}) is surjective.
\end{corollary}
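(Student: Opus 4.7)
The plan is to deduce the three displays of the corollary from the isomorphism $i_M$ of Theorem~\ref{equality of conn comp for GM and GM1} via a diagram chase on the canonical three-step filtration $(G_{\mathcal{M}_K(M)})^{\circ} \triangleleft G_{\mathcal{M}_K(M)}^{\id} \triangleleft G_{\mathcal{M}_K(M)}$ of (\ref{GMKAId normal div in GMKA}) and its Serre-side analogue (\ref{GMKA1Id normal div in GMKA1}). Using also the identification $G_{\mathcal{M}_K(M)}/G_{\mathcal{M}_K(M)}^{\id} \cong \Gal(K_e/K)$ that follows from the surjection (\ref{surjection of motivic Galois groups}) and the coset decomposition (\ref{GMKA decomposition into GMKAtau}), one obtains a commutative diagram of finite \'etale $\Q$-group schemes with exact rows:
\[
\xymatrix@C=1pt{
1 \ar[r] & G_{\mathcal{M}_K(M),1}^{\id}/(G_{\mathcal{M}_K(M),1})^{\circ} \ar[r] \ar[d]^{\alpha} & \pi_{0}(G_{\mathcal{M}_K(M),1}) \ar[r] \ar[d]^{i_M} & G_{\mathcal{M}_K(M),1}/G_{\mathcal{M}_K(M),1}^{\id} \ar[r] \ar[d]^{\beta} & 1 \\
1 \ar[r] & G_{\mathcal{M}_K(M)}^{\id}/(G_{\mathcal{M}_K(M)})^{\circ} \ar[r] & \pi_{0}(G_{\mathcal{M}_K(M)}) \ar[r] & \Gal(K_e/K) \ar[r] & 1
}
\]

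The arrow $\beta$ is injective by (\ref{GMKA1 mod GMKA1Id subset GMKA mod GMKAId}), and the injectivity of $\alpha$ follows from the identity $G_{\mathcal{M}_K(M),1}^{\id} \cap (G_{\mathcal{M}_K(M)})^{\circ} = (G_{\mathcal{M}_K(M)})^{\circ} \cap \Iso_{(V,\psi)} = G_{\mathcal{M}_K(M),1}^{\circ}$ (the last equality being the definition of $G_{\mathcal{M}_K(M),1}^{\circ}$), combined with the connectedness hypothesis $G_{\mathcal{M}_K(M),1}^{\circ} = (G_{\mathcal{M}_K(M),1})^{\circ}$. Since $i_M$ is an isomorphism by Theorem~\ref{equality of conn comp for GM and GM1}, an order count on $\overline{\Q}$-points — writing $|N_1||Q_1| = |\pi_{0}(G_{\mathcal{M}_K(M),1})| = |\pi_{0}(G_{\mathcal{M}_K(M)})| = |N||Q|$, combined with $|N_1| \leq |N|$ and $|Q_1| \leq |Q|$ from the two monomorphisms — forces both $\alpha$ and $\beta$ to be isomorphisms. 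This yields (\ref{GMKA1 mod GMKA1Id equals GMKA mod GMKAId}) and (\ref{GMKA1Id mod GMKA1circ equals GMKAId mod GMKAcirc}) simultaneously, and gives the surjectivity of the map (\ref{map from GMKA1 to Gal}).

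For the middle identification in (\ref{GMKA1 mod GMKA1Id equals DLKA mod DLKAId equals GLeK}), the relation $(gh)\beta'(gh)^{-1} = \rho_e(\tau\sigma)(\beta')$ for $g \in \gpDL_K^{\tau}(V,\psi,D)$, $h \in \gpDL_K^{\sigma}(V,\psi,D)$, $\beta' \in D$ shows that $g \mapsto \tau$ is a homomorphism of group schemes $\gpDL_K(V,\psi,D) \to \Gal(K_e/K)$ with kernel $\gpDL_K^{\id}(V,\psi,D)$. The inclusions (\ref{GMKAtau subset DLKAtau}), together with the surjectivity of $G_{\mathcal{M}_K(M),1} \to \Gal(K_e/K)$ just established, imply that $\gpDL_K^{\tau}(V,\psi,D) \neq \emptyset$ for every $\tau$, so the induced monomorphism $\gpDL_K(V,\psi,D)/\gpDL_K^{\id}(V,\psi,D) \hookrightarrow \Gal(K_e/K)$ is in fact an isomorphism; the full chain (\ref{GMKA1 mod GMKA1Id equals DLKA mod DLKAId equals GLeK}) then follows by commutativity of the factorization through $\gpDL_K/\gpDL_K^{\id}$.

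The main obstacle is the injectivity of $\alpha$, which hinges precisely on the connectedness assumption: without $G_{\mathcal{M}_K(M),1}^{\circ} = (G_{\mathcal{M}_K(M),1})^{\circ}$, the ambient identity component $(G_{\mathcal{M}_K(M)})^{\circ}$ could absorb components of $G_{\mathcal{M}_K(M),1}^{\id}$ strictly beyond $(G_{\mathcal{M}_K(M),1})^{\circ}$, breaking the order count that propagates the isomorphism of $i_M$ outward to $\alpha$ and $\beta$.
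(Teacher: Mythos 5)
Your proof is correct and follows essentially the same route as the paper, which simply cites the two three-step filtrations, the inclusion of quotients, the surjectivity of $G_{\mathcal{M}_{K}(M)} \to \Gal(K_e/K)$, and Theorem~\ref{equality of conn comp for GM and GM1}; you have merely made the implicit diagram chase explicit. The key points you supply --- the identification $G_{\mathcal{M}_K(M),1}^{\id} \cap (G_{\mathcal{M}_K(M)})^{\circ} = G_{\mathcal{M}_K(M),1}^{\circ} = (G_{\mathcal{M}_K(M),1})^{\circ}$ giving injectivity of $\alpha$, and the order count forcing $\alpha$ and $\beta$ to be isomorphisms once $i_M$ is one --- are exactly what the paper's citation list is meant to encode.
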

\begin{proof}  This follows from (\ref{DLKA to Galois Le over K}), 
(\ref{GMKAId normal div in GMKA}), 
(\ref{GMKA1Id normal div in GMKA1}), (\ref{GMKA1 mod GMKA1Id subset GMKA mod GMKAId}),
the surjectivity of (\ref{map from GMKA to Gal}) 
and Theorem \ref{equality of conn comp for GM and GM1}.
\end{proof}

\begin{definition}
The algebraic groups:
\begin{align*}
\MMT_K (M) & := G_{\mathcal{M}_{K}(M)} \\
\MS_K (M) & := G_{\mathcal{M}_{K}(M), 1}
\end{align*}
will be called the \emph{motivic Mumford-Tate group} and (as before) the 
\emph{motivic Serre group} for $M$ respectively.
\label{def of motivic mumford Tate and Sato Tate groups}\end{definition} 

\begin{conjecture} (Motivic Mumford-Tate) \label{Motivic Mumford-Tate} 
For any prime number $l$,
\begin{equation}
G_{l, K}^{\alg} = {\MMT_K (M)}_{\Q_l} .
\label{MMT eq}\end{equation}
\end{conjecture}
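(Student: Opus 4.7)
The plan is to reduce Conjecture \ref{Motivic Mumford-Tate} to two independent ingredients: the classical Mumford-Tate Conjecture \ref{Mumford-Tate}, which controls identity components, and a separate matching of component groups. The one-sided containment $G_{l,K}^{\alg} \subseteq \MMT_K(M)_{\Q_l}$ is already given by \eqref{GlKalg subset GMKAQl}, arising from the fact that $\rho_l$ factors through $G_{\mathcal{M}_K(M)}(\Q_l)$, so only the reverse inclusion is at stake. Because both sides are closed subgroup schemes of $\GIso_{(V_l,\psi_l)}$, the problem splits naturally into a comparison of dimensions and of component sets.

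For the identity components, I would invoke Serre's conjecture $\MT(V,\psi) = \MMT_K(M)^{\circ}$ (which, as recalled in the introduction, holds unconditionally for AHC motives and in particular for motives in $\mathcal{M}^{{\rm{av}}}_K$) together with the classical Mumford-Tate equality \eqref{MT eq}. Combining them yields
\[
(G_{l,K}^{\alg})^{\circ} \,=\, \MT(V,\psi)_{\Q_l} \,=\, \MMT_K(M)^{\circ}_{\Q_l},
\]
matching dimensions and giving equality on identity components.

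To bootstrap to the full groups, I would compare component groups. On the Galois side, $\pi_0(G_{l,K}^{\alg}) \cong \Gal(K_0/K)$ via the continuous map $\tilde{\epsilon}_{l,K}$ of Remark \ref{minimal field of connectedness}. On the motivic side, the decomposition \eqref{GMKA decomposition into GMKAtau} combined with the surjection \eqref{map from GMKA to Gal} realizes $\pi_0(\MMT_K(M))$ as an extension of $\Gal(K_e/K)$ by $G_{\mathcal{M}_K(M)}^{\id}/(\MMT_K(M))^{\circ}$. One would then show first that the natural map $\pi_0(G_{l,K}^{\alg}) \to \pi_0(\MMT_K(M))$ is surjective (injectivity being automatic from the ambient inclusion), and second that both component groups coincide with $\Gal(K_e/K)$, which forces $K_0 = K_e$ and the vanishing of the kernel above.

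The main obstacle is this last point: showing that $G_{\mathcal{M}_K(M)}^{\id}$ is connected, or equivalently, that the identity-Galois-twist stratum of the motivic Galois group contributes no extra components beyond $(\MMT_K(M))^{\circ}$. This is not implied by the classical Mumford-Tate conjecture and would in general require a motivic Tate-type density statement guaranteeing that every irreducible component of $\MMT_K(M)$ is met by some Frobenius in $\rho_l(G_K)$. In the favorable setting of Theorem \ref{conditions for AST}, where $\gpH(V,\psi) = \gpL(V,\psi,D) = \gpDL_{K_e}(V,\psi,D)$, this connectedness can be verified directly, which is why that theorem lets one derive the algebraic Sato-Tate conjecture from the classical Mumford-Tate conjecture; but in the absence of such strong endomorphism hypotheses, no general mechanism for establishing the full motivic equality is presently available.
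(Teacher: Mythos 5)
This statement is labeled a \emph{conjecture} in the paper (Conjecture~\ref{Motivic Mumford-Tate}), and the paper offers no proof of it; the text immediately following only records that it is equivalent to the Motivic Sato-Tate Conjecture~\ref{Motivic Sato-Tate} via the exact-columns diagram above Theorem~\ref{equality of conn comp for GM and GM1}. So there is no proof of the paper's to compare your proposal against, and you should not expect to be able to close the argument.

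That said, your reduction is essentially sound and tracks how the paper itself treats the matter. The containment $G_{l,K}^{\alg} \subseteq \MMT_K(M)_{\Q_l}$ is \eqref{GlKalg subset GMKAQl}. Granting Serre's conjecture $\MT(V,\psi) = \MMT_K(M)^{\circ}$ (which holds unconditionally for AHC motives, Remark~\ref{A Serre conjecture about connected component of motivic Mumford-Tate}) together with the classical Mumford-Tate equality \eqref{MT eq} yields equality of identity components, and since the inclusion then makes $\pi_{0}(G_{l,K}^{\alg}) \to \pi_{0}(\MMT_K(M))$ injective, only surjectivity remains. You correctly locate the obstruction: one needs $\Gal(K_0/K) \cong \pi_0(G_{l,K}^{\alg})$ to fill out all of $\pi_0(\MMT_K(M))$, which via the surjection \eqref{surjection of motivic Galois groups} and the decomposition \eqref{GMKA decomposition into GMKAtau} amounts to $K_0 = K_e$ plus connectedness of $G_{\mathcal{M}_{K}(M)}^{\id}$. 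Neither follows from the classical Mumford-Tate conjecture alone. The paper's actual stance is consistent with this: Corollary~\ref{cor alg Sato-Tate for MT explained by endo} proves the (equivalent) Motivic Sato-Tate equality $G_{l,K,1}^{\alg} = \MS_K(M)_{\Q_l}$ only under the extra hypothesis $\gpH(V,\psi) = C_{D}(\Iso_{(V,\psi)})$, which kills the kernel you identify, and otherwise the statement remains open.

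One small caution: you gesture at ``a motivic Tate-type density statement'' without pinning it down; what is actually needed is that $\rho_l(G_K)$ be Zariski dense in $\MMT_K(M)_{\Q_l}$, not merely in its own Zariski closure $G_{l,K}^{\alg}$, and this is precisely the content of the conjecture rather than a lemma one can appeal to. So your proposal does not, and cannot, constitute a proof; but as a dissection of the difficulty it is accurate.
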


By the diagram above Theorem \ref{equality of conn comp for GM and GM1}, 
Conjecture \ref{Motivic Mumford-Tate} is equivalent to the following.
\begin{conjecture} (Motivic Sato-Tate) \label{Motivic Sato-Tate} 
For any prime number $l$,
\begin{equation}
G_{l, K, 1}^{\alg} = \MS_{K}(M)_{\Q_l} .
\label{MST eq}\end{equation}
\end{conjecture}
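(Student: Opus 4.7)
The plan is to establish the claimed equality conditionally on Conjecture~\ref{Motivic Mumford-Tate}, by leveraging the commutative diagram with exact columns displayed immediately before Theorem~\ref{equality of conn comp for GM and GM1}. The containment $G_{l,K,1}^{\alg} \subseteq \MS_K(M)_{\Q_l}$ is already in hand as \eqref{GlK1alg subset GMKA1Ql}; it follows from the factorization of $\rho_l$ through $G_{\mathcal{M}_K(M)}(\Q_l)$, upon intersecting both sides with $\Iso_{(V_l,\psi_l)}$.

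Next, I would extract the short exact sequences constituting the two middle vertical columns of the diagram in question,
\begin{equation}
1 \to G_{l,K,1}^{\alg} \to G_{l,K}^{\alg} \stackrel{\chi}{\longrightarrow} \G_m \to 1, \qquad 1 \to \MS_K(M)_{\Q_l} \to \MMT_K(M)_{\Q_l} \stackrel{\chi}{\longrightarrow} \G_m \to 1,
\nonumber
\end{equation}
and observe that the vertical containment between them commutes with $\chi$, inducing the identity on the $\G_m$ quotient. A short diagram chase (a minor five-lemma) then shows that the containment $G_{l,K}^{\alg} \subseteq \MMT_K(M)_{\Q_l}$ is an equality if and only if $G_{l,K,1}^{\alg} \subseteq \MS_K(M)_{\Q_l}$ is an equality. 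Thus, under Conjecture~\ref{Motivic Mumford-Tate}, the asserted equality of the motivic Serre group with $G_{l,K,1}^{\alg}$ follows at once.

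The main obstacle is of course the Motivic Mumford-Tate conjecture itself, which presently remains open outside of special cases such as abelian varieties of low dimension or with sufficiently large endomorphism algebras (compare Corollary~\ref{base change on Sato-Tate based on dimension} and the Albert-type classifications invoked there). A minor technical check one must perform en route is that the intersection defining $\MS_K(M)$ commutes with base change to $\Q_l$, i.e.\ $(G_{\mathcal{M}_K(M)} \cap \Iso_{(V,\psi)}) \otimes_{\Q} \Q_l = (G_{\mathcal{M}_K(M)})_{\Q_l} \cap \Iso_{(V_l,\psi_l)}$; this is routine given flatness of $\Q \to \Q_l$ and the smoothness of the ambient group $\GIso_{(V,\psi)}$ into which both sides embed.
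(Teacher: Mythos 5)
This statement is a conjecture, and the paper does not prove it unconditionally; it only records (in the sentence preceding the conjecture) that it is equivalent to the Motivic Mumford--Tate Conjecture~\ref{Motivic Mumford-Tate} ``by the diagram above Theorem~\ref{equality of conn comp for GM and GM1}'', which is precisely the diagram chase you carry out. Your proposal is correct and takes essentially the same approach as the paper, merely making explicit the five-lemma argument (using the exactness of the two middle columns and the compatibility of the $\chi$-maps) and the routine check that the intersection defining $\MS_K(M)$ commutes with base change to $\Q_l$.
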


\begin{remark}
Conjecture \ref{Motivic Mumford-Tate} is equivalent to the conjunction of the following  
equalities:
\begin{align}
(G_{l, K}^{\alg})^{\circ} &= ({\MMT_K (M)}_{\Q_l})^{\circ} 
\label{MMT0 eq} \\
\pi_{0} (G_{l, K}^{\alg}) &= \pi_{0} ({\MMT_K (M)}_{\Q_l}).
\label{GMT0C eq}
\end{align}
Similarly, Conjecture \ref{Motivic Sato-Tate} is equivalent to the conjunction of the following equalities:
\begin{align}
(G_{l, K, 1}^{\alg})^{\circ} &= ({\MS_K (M)}_{\Q_l})^{\circ} 
\label{MST0 eq} \\
\pi_{0} (G_{l, K, 1}^{\alg}) &= \pi_{0} ({\MS_K (M)}_{\Q_l}).
\label{MST0C eq}
\end{align}
\end{remark}

%----------section{The Algebraic Sato-Tate groups}-----------

\section{The algebraic Sato-Tate group}

As in the previous section,
we work with motives $M$ which are direct summands of motives of the form 
$h^r (X)(m)$;
in this section, we propose a candidate for the algebraic Sato-Tate group 
for such motives. We prove, under the assumption in Definition
\ref{GMKA1 as AST group}, that our candidate for 
algebraic Sato-Tate group is the expected one.  
In particular the assumption of Definition \ref{GMKA1 as AST group} holds
if $M$ is an AHC motive (see Definition \ref{AHC motives} and Remark 
\ref{A Serre conjecture about connected component of motivic Mumford-Tate}).

\begin{remark}
One observes (\cite[Corollary p.\ 473--474]{Pan}, cf.\ \cite[p.\ 379]{Se94}) that 
\begin{equation}
\MT (V, \psi) \subset (G_{\mathcal{M}_{K}(M)})^{\circ}
\label{MT subset of conn comp of MMT}\end{equation}
Hence we get:
\begin{equation}
\gpH (V, \psi) \subset (G_{\mathcal{M}_{K}(M), 1})^{\circ}
\label{H subset of conn comp of MST}\end{equation}

Recall that $C_D (\Iso_{(V, \psi)}) = \gpDL_{K}^{\id} (V, \psi, D). $ 
It follows by (\ref{GSpId is CDGSp}), 
(\ref{GMKAtau subset of GSptau}), (\ref{GMKAId normal div in GMKA}), and (\ref{MT subset of conn comp of MMT}) we get:
\begin{equation}
\MT (V, \psi) \subset (G_{\mathcal{M}_{K}(M)})^{\circ} \subset G_{\mathcal{M}_{K}(M)}^{\id} \subset 
C_D (\GIso_{(V, \psi)}). 
\label{MTA subset GMKA0 subset CDGSp}
\end{equation}

Similarly by (\ref{GMKAtau subset DLKAtau}), (\ref{GMKA1Id normal div in GMKA1}), and 
(\ref{H subset of conn comp of MST}) that:
\begin{equation}
\gpH (V, \psi) \subset  (G_{\mathcal{M}_{K}(M), 1})^{\circ} \subset 
G_{\mathcal{M}_{K}(M), 1}^{\id} \subset C_D (\Iso_{(V, \psi)}). 
\label{HA subset GMKA10 subset CDSp}
\end{equation}

\begin{remark}\label{approximation form pi0GMK and pi0GMK1}
Observe that (\ref{MTA subset GMKA0 subset CDGSp}) gives an approximation for
$\pi_{0}(G_{\mathcal{M}_{K}(M)}^{\id})$
and (\ref{HA subset GMKA10 subset CDSp}) gives an approximation for 
$\pi_{0}(G_{\mathcal{M}_{K}(M), 1}^{\id})$.
\end{remark}

We observe that for $n$ odd the equality
\begin{equation}
\gpH (V, \psi) = C_D (\Iso_{(V, \psi)})
\label{HA = CD Sp for BGK classes}\end{equation}
is equivalent to the following equality:
\begin{equation}
\MT (V, \psi) = C_D (\GIso_{(V, \psi)}).
\label{MTA = CD GSp for BGK classes}\end{equation}
\end{remark}

\begin{definition} A motive $M \in \mathcal{M}_{K}$ will be called an \emph{AHC motive}
if every Hodge cycle on any object of $\mathcal{M}_{K}(M)$ is an absolute Hodge cycle
(cf. \cite[p.\ 29]{D1}, \cite[p.\ 473]{Pan}).
\label{AHC motives}
\end{definition}

\begin{remark} J-P. Serre conjectured \cite[sec.\ 3.4]{Se94} the equality $\MT (V, \psi) = 
\MMT_{K} (M)^{\circ}$. By \cite{DM} the conjecture holds 
for abelian varieties $A/K$ and for AHC motives $M$ (cf. \cite[Corollary p.\ 474]{Pan}).
\label{A Serre conjecture about connected component of motivic Mumford-Tate}
\end{remark}

\begin{remark} In \cite[p.\ 380]{Se94} there are examples of the computation of  
$\MMT_{K} (M) = G_{\mathcal{M}_{K}(M)}.$ 
In \cite[Theorems 7.3, 7.4]{BK}, we compute $\MMT_{K} (M)$ for abelian varieties
of dimension $\leq 3$ and families of abelian varieties of type I, II and III
in the Albert classification. 
\label{Examples of computation of motivic Mumford-Tate}
\end{remark}
\medskip

If Serre's conjecture $\MT (V, \psi) = 
\MMT_{K} (M)^{\circ}$ holds for $M,$  
then by (\ref{GlKalg subset GMKAQl}) 
the containment (\ref{Mumford-Tate Conj and Deligne Theorem analogue}) 
holds: 

\begin{equation}
(G_{l, K}^{\alg})^{\circ} \subset \MT (V, \psi)_{\Q_l}
\label{Assumption analogues to Deligne Theorem 1}
\end{equation}
and for $n$ odd it is equivalent to:
\begin{equation}
(G_{l, K, 1}^{\alg})^{\circ} \subset \gpH (V, \psi)_{\Q_l}.
\label{Assumption analogues to Deligne Theorem 2}
\end{equation}

\noindent
In particular (\ref{Assumption analogues to Deligne Theorem 1}) and 
(\ref{Assumption analogues to Deligne Theorem 2}) hold for AHC motives (cf.\ Remark 
\ref{A Serre conjecture about connected component of motivic Mumford-Tate}).

\begin{remark}
To obtain $G_{l, K}^{\alg}$ as an extension of scalars to $\Q_l$ of an expected algebraic Sato-Tate 
group defined over $\Q,$ the assumption in the following definition is natural in view of 
(\ref{H subset of conn comp of MST}),
(\ref{Assumption analogues to Deligne Theorem 2}),
Theorem \ref{equality of conn comp for GM and GM1} and Remark 
\ref{A Serre conjecture about connected component of motivic Mumford-Tate}. 
\end{remark}

\begin{definition} \label{GMKA1 as AST group}
Assume that $\MT (V, \psi) = \MMT_{K} (M)^{\circ}.$
Then the \emph{algebraic Sato-Tate group} $\AST_K (M)$ is defined as follows:
\begin{equation}
\AST_K (M) := \MS_K (M). 
\label{AST group}
\end{equation}
Every maximal compact subgroup of $\AST_{K} (M)(\C)$ will be called
a \emph{Sato-Tate group} associated with $M$ and denoted $\ST_K(M).$
\end{definition}
\medskip

\noindent
\begin{theorem}\label{AST as expected extension of HA} 
Assume that we have 
$\MT (V, \psi) = \MMT_{K} (M)^{\circ}.$
Then the group $\AST_{K} (M)$ is reductive and: 
\begin{align}
\AST_{K} (M) &\subset \gpDL_{K}(V, \psi, D),
\label{ASTKA subset DLKA} \\
\AST_{K} (M)^{\circ} &= \gpH(V, \psi) 
%\quad if \quad \gpDH(V, \psi) = \gpH(V, \psi),
\label{(AST)0} \\
\pi_{0} (\AST_{K} (M)) &= \pi_{0} ( \MMT_K (M)),
\label{pi0 AST} \\
\pi_{0} (\AST_{K} (M)) &= \pi_{0} (\ST_{K} (M)).
\label{pi0 AST is pi0 ST}\\
G_{l, K, 1}^{\alg}  &\subset \AST_{K}(M)_{\Q_l}, \,\,\, i.e. \,\,Conjecture \,\,\, \ref{general algebraic Sato Tate conj.} \,
{\rm{(a)}} \,\, holds \,\,  for \,\, M.
\label{Conjecture: general algebraic Sato Tate conj. for M} 
\end{align}
\end{theorem}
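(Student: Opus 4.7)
The plan is to derive each of the six assertions (reductivity plus the five displayed claims) from earlier results in the paper, the standing hypothesis $\MT(V,\psi) = \MMT_{K}(M)^{\circ}$, and the fact that for odd $n$ the decomposable Hodge group $\gpDH(V,\psi)$ is already connected. Three of the assertions require essentially no new work: the containment $\AST_{K}(M) \subset \gpDL_{K}(V,\psi,D)$ is \eqref{GMKA1 subset of DLKA}; the inclusion $G_{l,K,1}^{\alg} \subset \AST_{K}(M)_{\Q_l}$ is \eqref{GlK1alg subset GMKA1Ql}; and reductivity of $\AST_{K}(M) = G_{\mathcal{M}_{K}(M),1}$ is Remark \ref{GMKA1 is reductive}.

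For $\AST_{K}(M)^{\circ} = \gpH(V,\psi)$, I would combine the inclusion $\gpH(V,\psi) \subseteq (G_{\mathcal{M}_{K}(M),1})^{\circ}$ from \eqref{H subset of conn comp of MST} with the reverse: $(G_{\mathcal{M}_{K}(M),1})^{\circ}$ is connected, hence lies in $(G_{\mathcal{M}_{K}(M)})^{\circ}$, which by the Serre hypothesis equals $\MT(V,\psi)$; intersecting with $\Iso_{(V,\psi)}$ shows it is contained in $\MT(V,\psi) \cap \Iso_{(V,\psi)} = \gpDH(V,\psi)$, and connectedness then forces $(G_{\mathcal{M}_{K}(M),1})^{\circ} \subseteq \gpH(V,\psi)$. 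For $\pi_{0}(\AST_{K}(M)) = \pi_{0}(\MMT_{K}(M))$, I would apply Theorem \ref{equality of conn comp for GM and GM1}; its hypothesis that $G_{\mathcal{M}_{K}(M),1}^{\circ} := (G_{\mathcal{M}_{K}(M)})^{\circ} \cap \Iso_{(V,\psi)}$ be connected becomes, under the Serre assumption, the requirement that $\gpDH(V,\psi)$ be connected (treated below). Finally, the identification $\pi_{0}(\AST_{K}(M)) = \pi_{0}(\ST_{K}(M))$ is the standard topological fact that passing to a maximal compact subgroup of a reductive complex algebraic group preserves the group of connected components, following the same argument invoked in the proof of Proposition \ref{connected components iso}.

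The one technical input is the connectedness of $\gpDH(V,\psi)$ for odd $n$; this is the main, though mild, obstacle. The plan is to apply the splitting trick from the proof of Proposition \ref{L0realizing conn comp}, now at the level of $\MT(V,\psi)$: writing $n = 2m+1$, using the Hodge cocharacter $h$ (with $h(x)$ acting as $x^{p}$ on $V^{p,n-p}$) and the weight cocharacter $w(x) = x \cdot {\rm Id}_{V}$, the relations $\chi(h(x)) = x^{n}$ (from \eqref{mu infty and psi}) and $\chi(w(x)) = x^{2}$ (from \eqref{chi of homothety}) show that $s(x) := h(x)\, w(x)^{-m}$ is an algebraic section of $\chi : \MT(V,\psi) \to \G_{m}$. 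This realizes $\MT(V,\psi)$ as a variety isomorphic to $\gpDH(V,\psi) \times \G_{m}$; since $\MT(V,\psi)$ and $\G_{m}$ are both connected, so is $\gpDH(V,\psi)$, and in fact $\gpDH(V,\psi) = \gpH(V,\psi)$. With this in hand the hypothesis of Theorem \ref{equality of conn comp for GM and GM1} is met, and the remaining assembly of the proof is routine.
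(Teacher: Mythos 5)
Your proof is correct and follows essentially the same route as the paper's: reductivity from Remark \ref{GMKA1 is reductive}, \eqref{ASTKA subset DLKA} from \eqref{GMKA1 subset of DLKA}, \eqref{Conjecture: general algebraic Sato Tate conj. for M} from \eqref{GlK1alg subset GMKA1Ql}, the identity-component computation through the Serre hypothesis, and $\pi_0$-comparisons via Theorem \ref{equality of conn comp for GM and GM1} and the maximal-compact argument. The one place you add content is where the paper, in establishing \eqref{(AST)0}, simply inserts the phrase that $\gpDH(V,\psi) = \gpH(V,\psi)$ ``holds for $n$ odd'' into the chain
\[
G_{\mathcal{M}_{K}(M),1}^{\circ} = (G_{\mathcal{M}_{K}(M)})^{\circ} \cap \Iso_{(V,\psi)} = \MT(V,\psi) \cap \Iso_{(V,\psi)} = \gpH(V,\psi),
\]
while you recognize this as the nontrivial input and supply a proof, transplanting the splitting trick from the proof of Proposition \ref{L0realizing conn comp} to the Mumford--Tate level. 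That instinct is right, and the argument goes through; note only that you are implicitly using that the weight cocharacter $w(x) = x\,\mathrm{Id}_V$ actually lands in $\MT(V,\psi)(\C)$. This is true for odd (hence nonzero) $n$ --- the Zariski closure of $h_{\infty,V}(\R^{\times}) = \{r^{-n}\mathrm{Id}_V\}$ is $\G_m\cdot\mathrm{Id}_V$ --- but it is worth saying, since it is exactly the analogue of the appeal to Bogomolov's theorem in Proposition \ref{L0realizing conn comp}. Your derivation of \eqref{(AST)0} by sandwiching $(G_{\mathcal{M}_{K}(M),1})^{\circ}$ between $\gpH(V,\psi)$ (via \eqref{H subset of conn comp of MST}) and $\gpDH(V,\psi)^{\circ}$ is a mild rephrasing of the paper's direct computation, but it is logically equivalent and arguably cleaner because it makes visible that connectedness of $\gpDH$ is only needed afterward, for the hypothesis of Theorem \ref{equality of conn comp for GM and GM1}.
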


\begin{proof}
The group $\AST_{K} (M)$ is reductive by Remark \ref{GMKA1 is reductive}. Moreover  
(\ref{ASTKA subset DLKA}) is just (\ref{GMKA1 subset of DLKA}). 
By assumption $\MT (V, \psi) = (G_{\mathcal{M}_{K}(M)})^{\circ}$ and the equality
$\gpDH(V, \psi) = \gpH(V, \psi)$ (which holds $n$ odd), we have:
\begin{equation}
G_{\mathcal{M}_{K}(M), 1}^{\circ} = (G_{\mathcal{M}_{K}(M)})^{\circ} \cap \Iso_{(V, \psi)} =
\MT (V, \psi) \cap \Iso_{(V, \psi)} = \gpH(V, \psi).
\label{GMKA1 is Hodge under natural assumptions}
\end{equation}
Hence $G_{\mathcal{M}_{K}(M), 1}^{\circ}$ is connected and
\begin{equation}
\AST_{K} (M)^{\circ} = (G_{\mathcal{M}_{K}(M), 1})^{\circ} = G_{\mathcal{M}_{K}(M), 1}^{\circ},
\label{AST GMKAcirc 1 GMKA1 circ}
\end{equation}
so (\ref{(AST)0}) follows.
The equality (\ref{pi0 AST}) follows directly from the Theorem \ref{equality of conn comp for GM and GM1}.  
Equality (\ref{pi0 AST is pi0 ST}) follows since $\AST_{K} (M)^{\circ} (\C)$ is a connected 
complex Lie group and any maximal compact subgroup
of a connected complex Lie group is a connected real Lie group. 
(\ref{Conjecture: general algebraic Sato Tate conj. for M}) follows by (\ref{GlK1alg subset GMKA1Ql}) 
and the assumption (see also Definitions \ref{def of motivic mumford Tate and Sato Tate groups} and 
\ref{GMKA1 as AST group}).
\end{proof}

\begin{corollary}
Under the assumptions that $\MT (V, \psi) = \MMT_{K} (M)^{\circ}$ and  $\gpDH(V, \psi) = \gpH(V, \psi)$,
there are the following commutative diagrams with exact rows:

\begin{equation}
{\xymatrix@C=12pt{
\,\, 0 \ar[r]^{} \,\, & \,\, \gpH(V, \psi) \ar@<0.1ex>[d]^{} \ar[r]^{}   \,\, & 
\,\, \AST_{K} (M)  \ar@<0.1ex>[d]^{}  \ar[r]^{} \,\, & \,\, \pi_{0}( \AST_{K} (M)) 
\ar@<0.1ex>[d]^{}  \ar[r]^{}   \,\, & \,\, 0\\ 
\,\,  0 \ar[r]^{} \,\, & \,\, \gpL(V, \psi, D)  \ar[r]^{}   \,\, & 
\,\, \gpDL_{K} (V, \psi, D) \ar[r]^{}   \,\, & \,\, \pi_{0} ( \gpDL_{K} (V, \psi, D) ) \ar[r]^{}   \,\, & 
\,\, 0\\
}}\label{diagram competibility of AST GMKA1 with DLKA 1} 
\end{equation}
\begin{equation}
{\xymatrix@C=12pt{
\,\, 0 \ar[r]^{} \,\, & \,\, \pi_{0} (G_{\mathcal{M}_{K}(M), 1}^{\id}) \ar@<0.1ex>[d]^{} \ar[r]^{}   \,\, & 
\,\, \pi_{0} ( \AST_{K} (M) ) \ar@<0.1ex>[d]^{}  \ar[r]^{} \,\, & \,\, \Gal(K_e / K)  
\ar@<0.1ex>[d]^{=}  \ar[r]^{}   \,\, & \,\, 0\\ 
\,\,  0 \ar[r]^{} \,\, & \,\, \pi_{0} (\gpDL_{K}^{\id} (V, \psi, D))  \ar[r]^{}   \,\, & 
\,\, \pi_{0}( \gpDL_{K} (V, \psi, D)) \ar[r]^{}   \,\, & \,\, \Gal(K_e / K)  \ar[r]^{}   \,\, & 
\,\, 0\\
}}\label{diagram competibility of AST GMKA1 with DLKA 2}
\end{equation}
\label{commutative diagrams comparing AST with DL}\end{corollary}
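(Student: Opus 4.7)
The plan is to derive both diagrams as formal consequences of Theorem \ref{AST as expected extension of HA} together with the structural results on the twisted decomposable Lefschetz group collected in Chapter~3 (in particular equation \eqref{twisted Lefschetz}) and the isomorphisms assembled in Corollary \ref{equality of quotients concerning for GM and GM1}. Once the identity components and the quotients by $\id$-twists are correctly identified on both sides, each diagram reduces to the tautological short exact sequence $1 \to G^\circ \to G \to \pi_0(G) \to 1$ applied to matching pairs of groups, with vertical arrows supplied by the inclusion $\AST_K(M) \subseteq \gpDL_K(V,\psi,D)$ of \eqref{ASTKA subset DLKA}.

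For the first diagram, I would first rewrite the top row as the canonical sequence for $G = \AST_K(M)$, using $\AST_K(M)^\circ = \gpH(V,\psi)$ from \eqref{(AST)0} (which is where the hypothesis $\gpDH = \gpH$ enters, via the identification $G_{\mathcal{M}_K(M),1}^\circ = \gpH(V,\psi)$ in \eqref{GMKA1 is Hodge under natural assumptions}). The bottom row is the same tautological sequence for $G = \gpDL_K(V,\psi,D)$, whose identity component is $\gpL(V,\psi,D)$ by \eqref{twisted Lefschetz}. The middle vertical is \eqref{ASTKA subset DLKA}; restricting to identity components yields the left vertical $\gpH(V,\psi) \hookrightarrow \gpL(V,\psi,D)$ of \eqref{Hodge subset of Lefschetz}, and the right vertical is then induced on $\pi_0$ by functoriality. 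Commutativity of both squares is automatic.

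For the second diagram, I would start from the chain $(G_{\mathcal{M}_K(M),1})^\circ \triangleleft G_{\mathcal{M}_K(M),1}^{\id} \triangleleft G_{\mathcal{M}_K(M),1}$ of \eqref{GMKA1Id normal div in GMKA1}, whose outer quotient is $\Gal(K_e/K)$ by \eqref{GMKA1 mod GMKA1Id equals DLKA mod DLKAId equals GLeK}. Since $G_{\mathcal{M}_K(M),1}^{\id}$ is closed of finite index in $G_{\mathcal{M}_K(M),1}$ and contains the identity component, we have $(G_{\mathcal{M}_K(M),1}^{\id})^\circ = (G_{\mathcal{M}_K(M),1})^\circ = \AST_K(M)^\circ$; quotienting the chain above by this common connected component yields the top row. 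The bottom row is produced identically from $\gpL(V,\psi,D) = \gpDL_K^{\id}(V,\psi,D)^\circ = \gpDL_K(V,\psi,D)^\circ$ together with \eqref{simple properties of decomposible Lefschetz 1} and the natural quotient $\gpDL_K(V,\psi,D) / \gpDL_K^{\id}(V,\psi,D) \cong \Gal(K_e/K)$ from \eqref{DLKA to Galois Le over K}. The left vertical arrow is obtained by restricting \eqref{ASTKA subset DLKA} to $\id$-twists, using \eqref{GMKAtau subset DLKAtau} with $\tau = \id$, then passing to $\pi_0$.

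The one step requiring genuine care is the commutativity of the right-hand square in the second diagram: one must verify that the two surjections onto $\Gal(K_e/K)$---one arising from the motivic projection \eqref{map from GMKA1 to Gal} and the other from the geometric decomposition \eqref{decomposition into twisted Lefschetz for fixed elements}---agree on $\AST_K(M) \subseteq \gpDL_K(V,\psi,D)$. This compatibility is precisely the content of the chain of isomorphisms in \eqref{GMKA1 mod GMKA1Id equals DLKA mod DLKAId equals GLeK}, and ultimately reflects that both surjections record the same action by conjugation on $D$ through $\rho_e$. With this checked, every remaining square commutes for formal reasons and the exactness of all rows is automatic.
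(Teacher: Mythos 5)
Your proposal is correct and follows essentially the same route as the paper: the paper's own proof simply cites \eqref{(AST)0} for the top row of the first diagram and Corollary \ref{equality of quotients concerning for GM and GM1} for the top row of the second, exactly the two inputs you identify. The additional details you supply (the identification $\gpDL_{K}(V,\psi,D)^{\circ}=\gpL(V,\psi,D)$, the vertical arrows coming from \eqref{ASTKA subset DLKA}, and the compatibility of the two surjections onto $\Gal(K_e/K)$) are the routine verifications the paper leaves implicit.
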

\begin{proof}
The exactness of the top row of the Diagram (\ref{diagram competibility of AST GMKA1 with DLKA 1})
follows from (\ref{(AST)0}). The exactness of the top row of the Diagram 
(\ref{diagram competibility of AST GMKA1 with DLKA 2}) follows immediately from
Corollary \ref{equality of quotients concerning for GM and GM1}. 
\end{proof}

\begin{corollary}\label{The natural candidate for AST group example 1}
Assume that $\gpH(V, \psi) = C_{D} (\Iso_{(V, \psi)}).$ Then
\begin{equation}
\AST_{K}(M) = \gpDL_{K} (V, \psi, D).
\end{equation}
\end{corollary}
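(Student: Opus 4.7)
The plan is to reduce everything to a comparison of the exact sequences in Corollary \ref{commutative diagrams comparing AST with DL}, showing that the two rows degenerate into identical short exact sequences once the hypothesis is imposed.

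First I would unpack what the assumption $\gpH(V,\psi) = C_D(\Iso_{(V,\psi)})$ forces. Since $\gpL(V,\psi,D) = C_D^{\circ}(\Iso_{(V,\psi)})$ by Definition \ref{Definition Lefschetz} and $\gpDL_K^{\id}(V,\psi,D) = C_D(\Iso_{(V,\psi)})$ by Definition \ref{Galois twist of the Lefschetz group} at $\tau = \id$, the hypothesis says precisely that $C_D(\Iso_{(V,\psi)})$ is connected and equal to $\gpL(V,\psi,D)$ and to $\gpH(V,\psi)$. In particular $\gpDL_K^{\id}(V,\psi,D) = \gpL(V,\psi,D)$, so $\pi_0(\gpDL_K^{\id}(V,\psi,D)) = 1$.

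Next I would apply Theorem \ref{AST as expected extension of HA}, whose hypothesis $\MT(V,\psi) = \MMT_K(M)^{\circ}$ is in force throughout this chapter (and is automatic for AHC motives). This gives $\AST_K(M)^{\circ} = \gpH(V,\psi)$ and the inclusion $\AST_K(M) \subseteq \gpDL_K(V,\psi,D)$ from \eqref{ASTKA subset DLKA}. Combined with step 1, this yields
\[
\AST_K(M)^{\circ} = \gpH(V,\psi) = \gpL(V,\psi,D) = \gpDL_K(V,\psi,D)^{\circ}.
\]
Moreover, since $G_{\mathcal{M}_K(M),1}^{\id} \subseteq C_D(\Iso_{(V,\psi)}) = \gpH(V,\psi) = (G_{\mathcal{M}_K(M),1})^{\circ}$ by \eqref{HA subset GMKA10 subset CDSp}, the reverse inclusion being automatic, $G_{\mathcal{M}_K(M),1}^{\id}$ is itself connected and equals $\gpH(V,\psi)$; in particular $\pi_0(G_{\mathcal{M}_K(M),1}^{\id}) = 1$.

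Now I would invoke the second diagram of Corollary \ref{commutative diagrams comparing AST with DL}. With both bottom-left entries trivial (by the two vanishings just obtained), the exact rows collapse to isomorphisms $\pi_0(\AST_K(M)) \xrightarrow{\cong} \Gal(K_e/K)$ and $\pi_0(\gpDL_K(V,\psi,D)) \xrightarrow{\cong} \Gal(K_e/K)$, and the commutativity forces the natural map $\pi_0(\AST_K(M)) \to \pi_0(\gpDL_K(V,\psi,D))$ induced by the inclusion $\AST_K(M) \hookrightarrow \gpDL_K(V,\psi,D)$ to be the identity on $\Gal(K_e/K)$, hence an isomorphism.

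Finally I would conclude by a standard five-lemma (or order) argument: the closed immersion $\AST_K(M) \hookrightarrow \gpDL_K(V,\psi,D)$ induces an isomorphism on identity components and on component groups, hence is an isomorphism of group schemes. The argument is essentially bookkeeping within the diagrams of Corollary \ref{commutative diagrams comparing AST with DL}; the only step where one might worry is verifying that $G_{\mathcal{M}_K(M),1}^{\id}$ coincides with $(G_{\mathcal{M}_K(M),1})^{\circ}$, which is the crux, but this is immediate from \eqref{HA subset GMKA10 subset CDSp} together with $\AST_K(M)^{\circ} = \gpH(V,\psi)$ and the hypothesis $\gpH(V,\psi) = C_D(\Iso_{(V,\psi)})$.
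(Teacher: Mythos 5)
Your proof is correct and takes essentially the same route as the paper's: both deduce $\pi_{0}(G_{\mathcal{M}_{K}(M),1}^{\id}) = \pi_{0}(\gpDL_{K}^{\id}(V,\psi,D)) = 1$ from the hypothesis together with \eqref{HA subset GMKA10 subset CDSp}, then read off the isomorphism on component groups from the second diagram of Corollary \ref{commutative diagrams comparing AST with DL} and the isomorphism on identity components from $\gpH(V,\psi)=\gpL(V,\psi,D)$, concluding via the first diagram. The only difference is presentational: you make explicit the final five-lemma/containment step that the paper leaves implicit.
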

\begin{proof} It follows by the assumption and 
(\ref{HA subset GMKA10 subset CDSp}) that 
$$\pi_{0}( G_{\mathcal{M}_{K}(M), 1}^{\id}) = \pi_{0}(\gpDL_{K}^{\id} (V, \psi, D)) = 1.$$
Hence the middle vertical arrow in the diagram 
(\ref{diagram competibility of AST GMKA1 with DLKA 2}), which is the right
vertical arrow in the diagram 
(\ref{diagram competibility of AST GMKA1 with DLKA 1}), is an isomorphism.
Since $\gpL(V, \psi, D) = (C_{D} \Iso_{(V, \psi)})^{\circ}$, by assumption we have
$\gpH(V, \psi) = \gpL(V, \psi, D)$. Hence the left 
vertical arrow in the diagram (\ref{diagram competibility of AST GMKA1 with DLKA 1}) is 
an isomorphism, and so the middle vertical arrow in the diagram 
(\ref{diagram competibility of AST GMKA1 with DLKA 1}) is an isomorphism. 
\end{proof}

\begin{corollary} \label{cor alg Sato-Tate for MT explained by endo}
If $\gpH(V, \psi) = C_{D} (\Iso_{(V, \psi)})$ and the Mumford-Tate conjecture holds for $M$,
then the algebraic Sato-Tate conjecture holds:
\begin{equation}
G_{l, K, 1}^{\alg}  = \AST_{K}(M)_{\Q_l}.
\nonumber
%\label{alg Sato-Tate for MT explained by endo}
\end{equation}
\end{corollary}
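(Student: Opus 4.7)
The plan is to reduce the statement to a direct combination of Theorem~\ref{conditions for AST} with Corollary~\ref{The natural candidate for AST group example 1}: the former identifies $G_{l,K,1}^{\alg}$ with $\gpDL_K(V,\psi,D)_{\Q_l}$ under explicit hypotheses on $\gpH(V,\psi)$ and under the Mumford-Tate conjecture, while the latter identifies the algebraic Sato-Tate group with the same twisted decomposable Lefschetz group.

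First, I would verify hypothesis (1) of Theorem~\ref{conditions for AST}, namely the chain of equalities $\gpH(V,\psi) = \gpL(V,\psi,D) = \gpDL_{K_e}(V,\psi,D)$. By the hypothesis of the corollary, $\gpH(V,\psi) = C_D(\Iso_{(V,\psi)})$, and since $\gpH(V,\psi)$ is connected by Definition~\ref{Definition Mumford Tate}, this forces $C_D(\Iso_{(V,\psi)})$ to be connected as well. In particular $C_D(\Iso_{(V,\psi)}) = C_D^{\circ}(\Iso_{(V,\psi)}) = \gpL(V,\psi,D)$ by Definition~\ref{Definition Lefschetz}. Next I would invoke the identification $C_D(\Iso_{(V,\psi)}) = \gpDL_K^{\id}(V,\psi,D)$, which is immediate from Definition~\ref{Galois twist of the Lefschetz group} with $\tau = \id$, together with \eqref{simple properties of decomposible Lefschetz 1}, which gives $\gpDL_K^{\id}(V,\psi,D) = \gpDL_{K_e}(V,\psi,D)$. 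Chaining these equalities yields hypothesis (1). Hypothesis (2) is just the Mumford-Tate conjecture \eqref{MT eq} that we have assumed.

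Applying Theorem~\ref{conditions for AST} then produces, for every prime $l$, the equality
\begin{equation}
G_{l, K, 1}^{\alg} \,=\, \gpDL_{K}(V,\psi,D)_{\Q_l}.
\nonumber
\end{equation}
On the other hand, Corollary~\ref{The natural candidate for AST group example 1} (whose hypothesis is exactly the one assumed here) asserts $\AST_{K}(M) = \gpDL_{K}(V,\psi,D)$. Putting these together gives $G_{l, K, 1}^{\alg} = \AST_{K}(M)_{\Q_l}$ for every $l$, which is the algebraic Sato-Tate conjecture for $M$. There is no substantive obstacle; the only care needed is in checking that the connectedness of $\gpH(V,\psi)$ upgrades the hypothesis $\gpH(V,\psi) = C_D(\Iso_{(V,\psi)})$ to the full chain appearing in Theorem~\ref{conditions for AST}(1), so that its conclusion is available.
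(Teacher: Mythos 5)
Your proposal is correct and takes essentially the same route as the paper: both rely on Corollary~\ref{The natural candidate for AST group example 1} for $\AST_K(M) = \gpDL_K(V,\psi,D)$, and on the Mumford-Tate conjecture together with the $K$-versus-$K_e$ reduction (Theorem~\ref{connected components iso for DL} and Remark~\ref{Mumford-Tate Conj and Deligne Theorem analogue}) for the $G_{l,K,1}^{\alg}$ side; you simply invoke Theorem~\ref{conditions for AST} as a packaged citation where the paper re-derives that reduction inline. Your explicit check of hypothesis~(1) of Theorem~\ref{conditions for AST} — using connectedness of $\gpH(V,\psi)$ to upgrade $C_D(\Iso_{(V,\psi)})$ to $\gpL(V,\psi,D)$ and then to $\gpDL_{K_e}(V,\psi,D)$ via $\gpDL_K^{\id} = \gpDL_{K_e}^{\id} = \gpDL_{K_e}$ — supplies a detail the paper leaves tacit when it asserts ``by the assumption $\gpH(V,\psi) = \gpDL_{K_e}(V,\psi,D)$.''
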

\begin{proof} 
By (\ref{GlK1alg subset GMKA1Ql}) and Corollary \ref{The natural candidate for AST group example 1}: 
\begin{equation}
G_{l, K, 1}^{\alg} \subset \AST_{K}(M)_{\Q_l} = \gpDL_{K} (V, \psi, D)_{\Q_l}.
%\label{AST conj approach 1}
\nonumber\end{equation}
By the assumption $\gpH(V, \psi) = \gpDL_{K_e} (V, \psi, D)$. By virtue of the equivalence of (\ref{ItIsSatoTateGroupIdentification Le}) and
(\ref{ItIsSatoTateGroupIdentification}), we only need to prove that
$(G_{l, K, 1}^{\alg})^{\circ} = \gpH(V, \psi)_{\Q_l}$ which is equivalent to the 
Mumford-Tate conjecture by Remark \ref{Mumford-Tate Conj and Deligne Theorem analogue} . 
\end{proof}

\begin{remark} 
Theorem 
\ref{AST as expected extension of HA} and its Corollaries \ref{commutative diagrams comparing AST with DL}, 
\ref{The natural candidate for AST group example 1} and
\ref{cor alg Sato-Tate for MT explained by endo} show that $\AST_{K} (M)$ from Definition \ref{GMKA1 as AST group} 
is a natural candidate for the algebraic Sato-Tate group for the motive $M$. 
\end{remark}

\begin{remark} Let $M$ be a homogeneous motive which is a direct summand of $h^i(X)(m).$
Put $W := H^i(X(\C),\, \Q (m)).$ If $\psi$ is the polarization of the Hodge stucture on $W$
then we will also denote by $\psi$ the induced polarization on $V = H_{B} (M)$ 
(see Chapter 9).
Observe also that $W_l := H^i_{et}({\overline X},\, \Q_l (m)).$ We will denote by $\rho_{W_l}$ 
the natural representation $\rho_{W_l} \,:\, G_{K} \rightarrow \GIso_{(W_l, \psi_l)}(\Q_l).$  
\label{notation for the theorem: ST with resp. to STK....}\end{remark}

\begin{theorem} Let $M$ be a motive that is a summand of $h^i(X)(m)$ with nonzero weights. 
Let the Hodge structure associated with $M$ have pure odd weight $n.$ Assume that 
Conjecture \ref{general algebraic Sato Tate conj.} {\rm{(a)}} holds for $M$ and there is $c \in \N$ such that $(\Z_{l}^{\times})^c \,\, {\rm{Id}}_{W_l}  \, \subset \, \rho_{W_l} (G_K)$
for all $l.$ Moreover assume that for some $l$ coprime to $c:$
\begin{itemize}
\item[(1)] $K_0 \, \cap \, K(\mu_{\bar{l}}^{\otimes \, n}) \, = \, K,$
\item[(2)] $\gpast_{l, K}$ is an isomorphism with respect to $\rho_l.$
\end{itemize}
Then the Sato-Tate Conjecture holds for the representation $\rho_l \,:\, G_K \rightarrow \GIso_{(V_l, \psi_l)}(\Q_l)$ with respect to $\ST_{K}(M)$ if and only if  it holds for $\rho_l \,:\, G_{K_0} \rightarrow \GIso_{(V_l, \psi_l)}(\Q_l)$ with respect to
$\ST_{K_0}(M).$
\label{ST with resp. to STK the same as with resp. to STK0}
\end{theorem}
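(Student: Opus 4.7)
\noindent
The plan is to derive this theorem as a direct application of Theorem~\ref{STK iff STK0} to the representation $\rho_l$ on $V_l$. Of the three numbered hypotheses there, (1) and (3) coincide with our hypotheses (1) and (2), so it suffices to verify the homothety condition
\[
1 + l\Z_l \, \mathrm{Id}_{V_l} \subset \rho_l(G_K).
\]
Once this is established, the equivalence of equidistribution of normalized Frobenii for $\rho_l$ with respect to $\ST_K(M)$ and with respect to $\ST_{K_0}(M)$ follows at once, since under Conjecture~\ref{general algebraic Sato Tate conj.}(a) (and the motivic framework in which $\AST_K(M)$ and $\ST_K(M)$ are defined via Definition~\ref{GMKA1 as AST group}) the Sato-Tate groups appearing in Theorem~\ref{STK iff STK0} coincide with $\ST_K(M)$ and $\ST_{K_0}(M)$.

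To verify the homothety condition I would argue as follows. Because $M$ is a direct summand of $h^i(X)(m)$ in $\mathcal{M}_K$, the projector cutting it out is a $K$-rational idempotent in $\End_{\mathcal{M}_K}(h^i(X)(m))$. Its $l$-adic realization is a $G_K$-equivariant projector $W_l \twoheadrightarrow V_l$, exhibiting $V_l$ as a $G_K$-stable direct summand of $W_l$. Consequently any scalar homothety $\alpha \, \mathrm{Id}_{W_l}$ in $\rho_{W_l}(G_K)$ restricts to $\alpha \, \mathrm{Id}_{V_l}$ in $\rho_l(G_K)$. The hypothesis $(\Z_l^\times)^c \, \mathrm{Id}_{W_l} \subset \rho_{W_l}(G_K)$ therefore yields
\[
(\Z_l^\times)^c \, \mathrm{Id}_{V_l} \subset \rho_l(G_K),
\]
so it remains to show $1 + l\Z_l \subset (\Z_l^\times)^c$ for $l$ coprime to $c$. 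For $l > 2$, raising to the $c$-th power is a homeomorphism of the pro-$l$ group $1 + l\Z_l$ (its differential at $1$ is multiplication by the unit $c \in \Z_l^\times$), so $(1+l\Z_l)^c = 1 + l\Z_l$; for $l = 2$ the decomposition $\Z_2^\times = \{\pm 1\} \times (1 + 4\Z_2)$ together with $c$ odd yields the same inclusion by direct inspection.

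With the homothety condition secured, Theorem~\ref{STK iff STK0} applies verbatim to $\rho_l$ over the field $K$ and produces the claimed equivalence. I expect no serious obstacle here: the argument is essentially a packaging of Theorem~\ref{STK iff STK0}, and the only step with real content is the transfer of Serre-type homotheties from the ambient cohomology space $W_l$ to its Galois-equivariant direct summand $V_l$, which is in turn an immediate consequence of the $K$-rationality of the projector defining $M$. This is precisely the motivic analogue of the argument used in Corollary~\ref{STK iff STK0 as far as MT for A holds}, where Serre's theorem on homotheties for abelian varieties played the role of our hypothesis on $W_l$.
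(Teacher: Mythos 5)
Your proof is correct and takes essentially the same route as the paper's: reduce to Theorem~\ref{STK iff STK0} by transferring the homothety hypothesis from $W_l$ to its $G_K$-stable direct summand $V_l$ and observing that $1 + l\Z_l \subset (\Z_l^\times)^c$ when $l$ is coprime to $c$. You simply supply more detail (the $K$-rational projector and the explicit $c$-th power computation, including the $l=2$ case) than the paper, which asserts these steps without elaboration.
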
   

\begin{proof} Because $V_l$ is a subquotient of $W_l$ as a $\Q_l[G_K]$-module, we have  $(\Z_{l}^{\times})^c \,\, {\rm{Id}}_{V_l}  \, \subset \, \rho_{l} (G_K)$ for all $l.$ Since $l$ is coprime to $c$ then $1 + l\, \Z_l \subset (\Z_{l}^{\times})^c.$  
Hence the assumptions in this theorem guarantee that all assumptions of Theorem \ref{STK iff STK0} are satisfied. Hence 
Theorem \ref{ST with resp. to STK the same as with resp. to STK0} follows by Theorem \ref{STK iff STK0}.
\end{proof}

%-----------------BIBLIOGRAPHY----------

%x\bibliographystyle{amsplain}

\end{document}